\numberwithin{equation}{section}
\numberwithin{theorem}{section}
\numberwithin{lemma}{section}
\numberwithin{remark}{section}
\numberwithin{definition}{section}
\numberwithin{algorithm}{section}
\numberwithin{figure}{section}
\numberwithin{table}{section}
\renewenvironment{proof}{{\indent \indent \it Proof~}}{\hfill $\square$\par}
\begin{document}
\title{A Posteriori Estimates of Taylor-Hood Element for Stokes Problem Using Auxiliary Subspace Techniques}

\titlerunning{A Posteriori Estimates of Taylor-Hood Element for Stokes Problem Using Auxiliary Subspace Techniques}        

\author{Jiachuan Zhang        \and
        Ran Zhang        \and
        Xiaoshen Wang
}

\authorrunning{Jiachuan Zhang \and Ran Zhang\and Xiaoshen Wang} 

\institute{Jiachuan Zhang \at
              School of Physical and Mathematical Sciences, Nanjing Tech University, Nanjing 211816, P.~R.~China. \\
              \email{zhangjc@njtech.edu.cn}
           \and
           Ran Zhang (\Letter) \at
               School of Mathematics, Jilin University,
               Changchun 130012, P.~ R.~ China.\\
               \email{zhangran@jlu.edu.cn}
           \and
           Xiaoshen Wang\at
           Department of Mathematics and Statistics, University of Arkansas at Little Rock, Arkansas 72204, USA.\\
           \email{xxwang@ualr.edu}
}

\date{}

\maketitle

\begin{abstract}
Based on the auxiliary subspace techniques, a hierarchical basis \textit{a posteriori} error estimator is proposed
for the Stokes problem in two and three dimensions. For the error
estimator, we need to solve only two global diagonal linear systems
corresponding to the degree of freedom of velocity and pressure respectively, which reduces the
computational cost sharply. The upper and lower bounds up to an oscillation term of the
error estimator are also shown to address the reliability of the
adaptive method without saturation assumption. Numerical simulations are performed to demonstrate
the effectiveness and robustness of our algorithm.
\keywords{Adaptive method \and
Taylor-Hood element \and Auxiliary subspace techniques \and \textit{A posteriori} error estimate \and Stokes problem}
\subclass{65N15 \and 65N30 \and 65M12 \and 76D07}
\end{abstract}


\section{Introduction}\label{sec:introduction}

In this paper, we propose an \textit{a posteriori} error estimator based on the auxiliary subspace techniques for the Taylor-Hood finite
element method (FEM) \cite{BOFFI1994,BOFFI1997} to solve Stokes
equations \cite{Huang2011,Verfurth2013} with Dirichlet boundary condition

\begin{align}
\label{eqn:original eqn 1}
-\Delta\boldsymbol{u}+\nabla p=\boldsymbol{f}& \quad\mbox{ in }\Omega,\\
\label{eqn:original eqn 2}
\nabla\cdot \boldsymbol{u}=0&  \quad\mbox{ in }\Omega,\\
\label{eqn:original eqn 3} \boldsymbol{u}=\boldsymbol{g}&  \quad\mbox{ on
}\Gamma,
\end{align}
where $\Omega \subset \mathbb{R}^d (d=2,3)$ is a bounded polygonal or
polyhedral domain with the boundary
$\Gamma$. The function $\boldsymbol{u}$
is a vector velocity field and $p$ is the pressure.
The functions $\boldsymbol{f}$ and $\boldsymbol{g}$ are given Lebesgue square-integrable functions on $\Omega$ and $\Gamma$, respectively.
The problem
(\ref{eqn:original eqn 1})-(\ref{eqn:original eqn 3}) has a unique
solution in the sense that $p$ is only determined up to an
additive constant. In the later sections, we will analyze the case of $\boldsymbol{g}=\boldsymbol{0}$, and the case $\boldsymbol{g}\neq\boldsymbol{0}$ is similar.

\textit{A posteriori} error estimators and adaptive FEM can be used to solve the problems with local singularities effectively. Hierarchical basis \textit{a posteriori} estimator is a popular approach and has been proven to be robust and efficient, whose origins can be traced back to \cite{Zienkiewicz1986,Zienkiewicz1982}. In this approach, let $V_k$ and $W_{k+d}$ be the approximation space and auxiliary space, respectively, where $V_k\cap W_{k+d}=\{0\}$ (to be specified in Section \ref{sec:FEM space}). The solution of approximation problem (\ref{eqn:approximation problem}) is denoted by $(\hat{\boldsymbol{u}},\hat{p})\in V_k$. Then the approximation error $\|(\boldsymbol{u}-\hat{\boldsymbol{u}},p-\hat{p})\|_V$ can be estimated in auxiliary space $W_{k+d}$ with the help of the error problem (\ref{eqn:error problem}). Traditionally, the upper and lower bounds of error estimations need to make use of a saturation assumption, i.e. the best approximation of $(\boldsymbol{u},p)$ in $V_k\cup W_{k+d}$ is strictly better than its best approximation in $V_k$. Although saturation assumption is widely accepted in \textit{a posteriori} error analysis \cite{HainReduced2019,Antonietti2013} and satisfied in the case of small data oscillation \cite{Dorfler2002}, it is not difficult to construct counter-examples for particular problems on particular meshes \cite{Bornemann1996}. To remove the saturation assumption, Araya et al. presented an adaptive stabilized FEM combined with a hierarchical basis \textit{a posteriori} error estimator in a special auxiliary bubble function spaces for generalized Stokes problem and Navier-Stokes equations. The error analysis of upper and lower bounds avoids the use of saturation assumption. Although the construction of auxiliary space needs a transformation operator in the reference element, it provides a novel idea for removing saturation assumption in reliability analysis \cite{Araya2008,Araya2005,Araya2012,Araya2018}. Hakula et al. constructed the auxiliary space directly on each element for the second order elliptic problem and elliptic eigenvalue problem and proved that the error is bounded by the error estimator up to oscillation terms without the saturation assumption \cite{Hakula2017,Giani2021}.

The contribution of this paper is twofold. Firstly, we extend the auxiliary subspace techniques in \cite{Hakula2017} to the Stokes problem in two and three dimensions. More specifically, we construct auxiliary spaces for velocity and pressure, respectively and prove that these auxiliary spaces satisfy the inf-sup condition shown in Lemma~\ref{lem:inf-sup one layer in com space}. The error $\|(\boldsymbol{u}-\hat{\boldsymbol{u}},p-\hat{p})\|_V$ can be bounded by the solution of the error problem (\ref{eqn:error problem}), the term $\|\nabla\cdot \hat{\boldsymbol{u}}\|$ and the oscillation term $osc(\boldsymbol{f})$ (Theorem~\ref{thm:equivalence hat e}). We emphasize that the error analysis does not use the saturation assumption. The other contribution of the present work is the diagonalization of the error problem to reduce the computational cost. Considering that the Stokes problem is a saddle point problem, we replace part of the matrix, which is related to velocity only, with a diagonal matrix in (\ref{eqn:define Du}) to construct the second error problem shown in (\ref{eqn:error in com space easily}). Then the solution of (\ref{eqn:error in com space easily}) combined with the term $\|\nabla\cdot \hat{\boldsymbol{u}}\|$ and the oscillation term $osc(\boldsymbol{f})$ can be used to bound the error $\|(\boldsymbol{u}-\hat{\boldsymbol{u}},p-\hat{p})\|_V$ (Theorem~\ref{theo:diag velocity approx error}). Here, obtaining the pressure and velocity requires solving a non-diagonal and diagonal linear system, respectively. To further reduce the computation, the diagonal matrix is obtained by multiplying the diagonal matrix of pressure correlation matrix by a constant $c_s$ related to the number of the bases of pressure in each element. Now, the linear systems of pressure and velocity are both diagonal, which is the third error problem shown in (\ref{eqn:the third error problem}) whose solution combined with the term $\|\nabla\cdot \hat{\boldsymbol{u}}\|$ and the oscillation term $osc(\boldsymbol{f})$ can be used to bound the error $\|(\boldsymbol{u}-\hat{\boldsymbol{u}},p-\hat{p})\|_V$ (Theorem~\ref{theo:diag pressure approx error}).

The rest of the work is organized as follows. In Section 2, the FEM spaces, the approximation problem, and the first error problem are introduced. Section 3
presents a quasi-interpolant based on moment conditions and develops \textit{a posteriori} error estimation related to the first error problem for the Stokes equation. In Section 4, to reduce the computational cost, the system diagonalization techniques are developed for velocity (the second error problem) and pressure (the third error problem), respectively. The \textit{a posteriori} error estimates of the corresponding error problems are shown. In Section 5, we
obtained the local and global \textit{a posteriori} error estimators, and an adaptive FEM is
proposed based on the solution of the third error problem and term $\|\nabla\cdot \hat{\boldsymbol{u}}\|$. In Section 6, numerical
experiment results are presented to verify the effectiveness of our
adaptive algorithm. The last section is devoted to some concluding
remarks.

\section{Approximation Problem and Error Problem}\label{sec:FEM space}
The following notations are used in this paper

\vspace{-10pt}
\begin{align*}
a(\boldsymbol{w},\boldsymbol{v})&=\int_\Omega \nabla\boldsymbol{w}:\nabla\boldsymbol{v},\\
b(\boldsymbol{v},q)&=\int_\Omega q\nabla\cdot\boldsymbol{v},\\
a_1((\boldsymbol{w},r),(\boldsymbol{v},q))&=a(\boldsymbol{w},\boldsymbol{v})
-b(\boldsymbol{v},r)+b(\boldsymbol{w},q),\\
f(\boldsymbol{v})&=\int_\Omega \boldsymbol{f}\cdot \boldsymbol{v},
\end{align*}
for all $(\boldsymbol{v},q),(\boldsymbol{w},r)\in V:=[H_0^1(\Omega)]^d\times L_0^2(\Omega)$, where

\vspace{-10pt}
\begin{align*}
&H^{1}_{0}(\Omega)=\{v\in H^1(\Omega)~\big|~ v=0~on~ \Gamma\},\\
&L_0^2(\Omega)=\{q\in L^2(\Omega)~\big|~\int_{\Omega}q=0\}.
\end{align*}

The variational formulation of (\ref{eqn:original eqn
1})-(\ref{eqn:original eqn 3}) is: Find $(\boldsymbol{u},p)\in
V$ such that

\vspace{-10pt}
\begin{align}\label{eqn:variational form}
a_1((\boldsymbol{u},p),(\boldsymbol{v},q))=f(\boldsymbol{v}),
\end{align}
for all $(\boldsymbol{v},q)\in V$.

We denote by $\|\cdot\|_{m,\Omega}$ and $|\cdot|_{m,\Omega}$ the standard norm and semi-norm of Sobolev space with $m\geq0$, respectively. For the sake of convenience, we will use $\|\cdot\|$ and $|\cdot|$ for $\|\cdot\|_{0,\Omega}$ and $|\cdot|_{0,\Omega}$, respectively. For the coupling space $V$, we define

\vspace{-10pt}
\begin{align}\label{eqn:norm V}
\|(\boldsymbol{v},q)\|_V=\sqrt{\|\nabla\boldsymbol{v}\|^2+\|q\|^2}.
\end{align}
From Cauchy-Schwarz inequality, $a_1(\cdot,\cdot)$ is continuous, i.e.

\vspace{-10pt}
\begin{align}\label{eqn:B continous}
|a_1((\boldsymbol{w},r),(\boldsymbol{v},q))|\leq\mathfrak{C}_1\|(\boldsymbol{w},r)\|_V\|(\boldsymbol{v},q)\|_V.
\end{align}
From Proposition 4.69 in \cite{Verfurth2013}, $a_1(\cdot,\cdot)$ satisfies the estimates

\vspace{-10pt}
\begin{align}\label{eqn:a1 orign inf-sup}
\inf_{(\boldsymbol{v},q)\in V\backslash\{0\}}
\sup_{(\boldsymbol{w},r)\in V\backslash\{0\}}
\frac{a_1((\boldsymbol{v},q),(\boldsymbol{w},r))}
{\|(\boldsymbol{v},q)\|_V\|(\boldsymbol{w},r)\|_V}
\geq \mathfrak{c}_1.
\end{align}
We refer to $\mathfrak{C}_1$ and $\mathfrak{c}_1$ as the continuity and inf-sup constant, respectively.

\subsection{Approximation Problem}

Let $\mathcal{T}$ be a family of conforming, shape-regular simplicial partition of $\Omega$. Let $\mathcal{F}$ denote the set of $(d-1)$-dimensional sub-simplices, the ``faces" of $\mathcal{T}$, and further decompose it as $\mathcal{F}=\mathcal{F}_I\cup\mathcal{F}_D$, where $\mathcal{F}_I$ comprises those faces in the interior of $\Omega$, and $\mathcal{F}_D$ comprises those faces in $\Gamma$. To ensure that the Taylor-Hood element satisfies the stability condition (inf-sup condition), we make the following assumptions for $\mathcal{T}$:
\begin{itemize}
\item[] \textbf{Assumption 1}. $\mathcal{T}$ contains at least three triangles in the case of $d=2$.
\item[] \textbf{Assumption 2}. Every element $T\in\mathcal{T}$ has at least one vertex in the interior of $\Omega$ in the case of $d=3$.
\end{itemize}

In our scheme, in order to have a conforming approximation we shall choose the finite-dimensional spaces $VV_{k+1}$ and $VP_k$ with $k\geq 1$ (called Hood-Taylor or Taylor-Hood element)

\vspace{-10pt}
\begin{align}
\label{eqn:space VV}
VV_{k+1}&=\{\hat{\boldsymbol{v}}\in[H_{0}^1(\Omega)]^d~\big|~ \hat{\boldsymbol{v}}_{|T}\in [P_{k+1}]^d, \forall T\in \mathcal{T} \}\subset [H_{0}^1(\Omega)]^d,\\
\label{eqn:space VP}
VP_k&=\{\hat{q}\in H^1(\Omega)~\big|~\hat{q}_{|T}\in P_k(K), \forall T \in \mathcal{T}, \int_{\Omega}\hat{q}=0\}\subset L_0^2(\Omega),\\
\label{eqn:space Vk}
V_k&=VV_{k+1}\times VP_k.
\end{align}

A mixed finite element method to approximate (\ref{eqn:variational form}) is called an \textbf{approximation problem}: Find $(\hat{\boldsymbol{u}},\hat{p})\in V_k$ such that

\vspace{-10pt}
\begin{align}\label{eqn:approximation problem}
a_1((\hat{\boldsymbol{u}},\hat{p}),(\hat{\boldsymbol{v}},\hat{q}))=f(\hat{\boldsymbol{v}}),
\end{align}
for all $(\hat{\boldsymbol{v}},\hat{q})\in V_{k}$.
\begin{remark}
The solvability of the approximation problem (\ref{eqn:approximation problem}) can be found in \cite{BOFFI1994,BOFFI1997,Brezzi2006}.
\end{remark}

\subsection{Error Problem}
Given a simplex $T\subset\mathbb{R}^d$ of diameter $h_T$, we define $\mathcal{S}_j(T), 0\leq j\leq d$ to be the set of sub-simplices of $T$ of dimension $j$. The cardinality is $|\mathcal{S}_j(T)|=\binom{d+1}{j+1}$. We denote by $\mathcal{S}_j$ the set of sub-simplices of the triangulation of dimension $j$, in particular, $\mathcal{S}_{d-1}=\mathcal{F}_I\cup\mathcal{F}_D$ and $\mathcal{S}_d=\mathcal{T}$. Recall that $P_m(S)$ is the set of polynomials of total degree $\leq m$ with domain $S$, and  note that dim $P_m(S)=\binom{m+j}{j}$ for $S\in\mathcal{S}_j(T)$. Denoting the vertices of $T$ by $\{z_0,\cdots,z_d\}$, we let $\lambda_i\in P_1(T), 0\leq i\leq d$, be the corresponding barycentric coordinates, uniquely defined by the relation $\lambda_i(z_i)=\delta_{ij}$. We denote by $F_j\in \mathcal{S}_{d-1}(T)$ the sub-simplex not containing $z_j$.

The fundamental element and face bubbles for $T$ are given by $(j=0,1,\cdots,d)$

\vspace{-10pt}
\begin{align*}
b_T=\prod_{k=0}^{d}\lambda_k\in P_{d+1}(T),\quad b_{F_j}=\prod_{k=0 \atop k\neq j}^d \lambda_k\in P_d(T).
\end{align*}
We also define general element and face bubbles of degree $m$,

\vspace{-10pt}
\begin{align}\label{eqn:general element bubbles}
&Q_{m}(T)=\{\hat{v}=b_T\hat{w}\in P_m(T)~|~ \hat{w}\in P_{m-d-1}(T)\},\\
\label{eqn:general face bubbles}
&Q_{m}(F_j)=\{\hat{v}=b_{F_j}\hat{w}\in P_m(T)~|~ \hat{w}\in P_{m-d}(T)\}\ominus Q_{m}(T).
\end{align}
From now on, we use the shorthand $W_1\ominus W_2=span\{W_1\backslash W_2\}$ for vector spaces $W_1$ and $W_2$. So $W_1\ominus W_2$ is the largest subspace of $W_1$ such that $W_1\cap W_2=\{0\}$. The functions in $Q_m(T)$ are precisely those in $P_m(T)$ that vanish on $\partial T$, and the functions in $Q_m(F_j)$ are precisely those in $P_m(T)$ that vanish on $\partial T\backslash F_j$. It is clear that $Q_{m}(T)\cap Q_{m}(F_j)=\{0\}$ and $Q_m(F_i)\cap Q_m(F_j)=\{0\}$ for $i\neq j$.
The collection of face bubbles of degree $m$ can be denoted by

\vspace{-10pt}
\begin{align*}
Q_{m}(\partial T)=\bigoplus_{j=0}^d Q_{m}(F_j).
\end{align*}
Then we define the local space

\vspace{-10pt}
\begin{align*}
R_m(T)=Q_m(T)\oplus Q_{m}(\partial T),
\end{align*}
which contains all element and face bubbles of degree $m$ related to $T$ defined in (\ref{eqn:general element bubbles}) and (\ref{eqn:general face bubbles}), and the corresponding global finite element spaces

\vspace{-10pt}
\begin{align*}
R_m=\{\hat{v}\in H_{0}^1(\Omega) ~|~ \hat{v}_{|T}\in R_m(T) \mbox{~for each~}T\in\mathcal{T}\}.
\end{align*}

\begin{lemma}
A function $\hat{v}\in R_{m}(T)$ is uniquely determined by the moments

\vspace{-10pt}
\begin{align}\label{eqn:RT monment}
\int_S \hat{v}\kappa,\quad \forall \kappa\in P_{m-\ell-1}(S),\quad \forall S\in S_{\ell}(T), \quad d-1\leq \ell\leq d.
\end{align}
\end{lemma}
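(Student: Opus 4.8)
The plan is to prove this by a dimension count combined with a unisolvence argument, peeling off the sub-simplices from dimension $d$ down to dimension $d-1$. First I would verify that the number of moment conditions in (\ref{eqn:RT monment}) matches $\dim R_m(T)$. Since $R_m(T) = Q_m(T)\oplus\bigoplus_{j=0}^d Q_m(F_j)$, and $\dim Q_m(T) = \dim P_{m-d-1}(T) = \binom{m-d-1+d}{d} = \binom{m-1}{d}$ while $\dim Q_m(F_j) = \dim P_{m-d}(T)\ominus\dim Q_m(T)$ must be computed carefully (functions of the form $b_{F_j}\hat w$ with $\hat w\in P_{m-d}(T)$, modulo those that happen to lie in $Q_m(T)$). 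On the moment side, for $\ell = d$ there is one sub-simplex $T$ itself contributing $\dim P_{m-d-1}(T) = \binom{m-1}{d}$ conditions; for $\ell = d-1$ each of the $d+1$ faces $S$ contributes $\dim P_{m-d}(S) = \binom{m-d+d-1}{d-1} = \binom{m-1}{d-1}$ conditions. One checks that these totals agree, so it suffices to prove that the only $\hat v\in R_m(T)$ annihilating all the moments is $\hat v = 0$.

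For the unisolvence step I would argue as follows. Take $\hat v\in R_m(T)$ with all moments (\ref{eqn:RT monment}) vanishing. Write $\hat v = \hat v_T + \sum_{j=0}^d \hat v_{F_j}$ with $\hat v_T\in Q_m(T)$ and $\hat v_{F_j}\in Q_m(F_j)$. On a fixed face $S = F_j$, every function in $Q_m(T)$ and in $Q_m(F_i)$ for $i\neq j$ vanishes (by the characterization quoted in the excerpt: $Q_m(F_i)$ consists exactly of those $P_m(T)$-functions vanishing on $\partial T\setminus F_i$, in particular on $F_j$ when $i\neq j$; and $Q_m(T)$ vanishes on all of $\partial T$). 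Hence $\hat v|_{F_j} = \hat v_{F_j}|_{F_j}$, and the $\ell = d-1$ moments on $S = F_j$ read $\int_{F_j}\hat v_{F_j}\,\kappa = 0$ for all $\kappa\in P_{m-d}(F_j)$. Since $\hat v_{F_j}|_{F_j} = b_{F_j}|_{F_j}\cdot(\hat w|_{F_j})$ where $b_{F_j}|_{F_j} = \prod_{k\neq j}\lambda_k|_{F_j}$ is the standard strictly-positive-in-the-interior bubble on the $(d-1)$-simplex $F_j$, the weighted $L^2$ pairing $\int_{F_j} b_{F_j} r\,\kappa$ is an inner product on polynomials $r$ of the appropriate degree; taking $\kappa$ to run over $P_{m-d}(F_j)$ and noting $\hat w|_{F_j}\in P_{m-d}(F_j)$ forces $\hat v_{F_j}|_{F_j} = 0$, and then $\hat v_{F_j} = 0$ by definition of $Q_m(F_j)$. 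Doing this for every $j$ kills all the face components, leaving $\hat v = \hat v_T\in Q_m(T)$. Finally the $\ell = d$ moments give $\int_T b_T\,\hat w\,\kappa = 0$ for all $\kappa\in P_{m-d-1}(T)$ with $\hat w\in P_{m-d-1}(T)$; since $b_T > 0$ in the interior of $T$, this weighted pairing is again an inner product, so $\hat w = 0$ and thus $\hat v_T = 0$.

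The main obstacle I anticipate is the careful bookkeeping in the dimension count, specifically handling the ``$\ominus Q_m(T)$'' in the definition (\ref{eqn:general face bubbles}) of $Q_m(F_j)$: one must confirm that $\dim\bigl(\{b_{F_j}\hat w : \hat w\in P_{m-d}(T)\}\bigr) - \dim Q_m(T)$ equals $\dim P_{m-d}(F_j) = \binom{m-1}{d-1}$, i.e. that restriction to $F_j$ identifies $Q_m(F_j)$ with $b_{F_j}|_{F_j}\cdot P_{m-d}(F_j)$ bijectively and that the kernel of this restriction on $\{b_{F_j}\hat w\}$ is exactly $Q_m(T)$. This is a standard fact about bubble spaces but needs a clean statement; once it is in place, the unisolvence argument above is routine because each step reduces to the positivity of a bubble function on the interior of the relevant (sub-)simplex and the resulting weighted-$L^2$ inner product being nondegenerate on the polynomial space in question. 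I would also remark that the same pairing argument simultaneously shows the moments form a \emph{dual basis}-type system, which is what is actually needed downstream for constructing the quasi-interpolant.
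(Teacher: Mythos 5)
Your proof is correct, but it takes a genuinely different route from the paper's. The paper invokes the unisolvence result of Arnold (2013) for the full Lagrange space $P_m(T)$ (moments over all sub-simplices of dimensions $0,\dots,d$), and then observes that for $\hat v\in R_m(T)$ the moments over sub-simplices of dimension $\le d-2$ vanish identically (since face and element bubbles are zero there), so the remaining moments over $T$ and its faces suffice to force $\hat v=0$. You instead give a self-contained, first-principles argument: peel off the face components one at a time using the fact that restriction to $F_j$ turns $\hat v_{F_j}$ into a bubble-weighted polynomial on $F_j$, and then the weighted $L^2$ pairing $\int_{F_j} b_{F_j}|_{F_j}\, r\,\kappa$ is a nondegenerate inner product on $P_{m-d}(F_j)$; finally kill the element component with the $b_T$-weighted pairing on $T$. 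Your extra observation that a bubble vanishing on $F_j$ as well lies in $Q_m(T)$ (it is then divisible by $b_T$) together with $Q_m(F_j)\cap Q_m(T)=\{0\}$ is exactly what closes the loop. The trade-off: the paper's proof is a one-liner modulo the external citation; yours is longer but makes the mechanism explicit and does not rely on the full Lagrange unisolvence of \cite{Arnold2013}. One remark: you correctly flag the dimension count (and it does check out, $\binom{m-1}{d}+(d+1)\binom{m-1}{d-1}$ on both sides via Pascal's rule), but you do not carry it through; note that the paper's proof also omits it, so in that respect the two are equally informal. Strictly speaking, the injectivity argument you do give is what ``uniquely determined'' asserts, and the dimension count is only needed later for the surjectivity used in constructing $\hat{\boldsymbol{v}}_2$ in Lemma~3.1, so flagging it is reasonable.
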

\begin{proof}
As is shown in \cite{Arnold2013}, a function $v\in P_m(T)$ is uniquely determined by the moments

\vspace{-10pt}
\begin{align*}
\int_S \hat{v}\kappa,\quad \forall \kappa\in P_{m-\ell-1}(S),\quad \forall S\in S_{\ell}(T),\quad 0\leq\ell\leq d,
\end{align*}
where $\int_S \hat{v}\kappa$ with $S\in S_0(T)$ is understood to be the evaluation of $\hat{v}$ at the vertex $S$. Since $\hat{v}\in R_{m}(T)$ is uniquely determined by its moments on $T$ and $F_j, j=0,\cdots,d$, the result is clear.
\end{proof}

Given $k\in \mathbb{N}$, we define the local error space for
velocity by element and face bubbles

\vspace{-10pt}
\begin{align*}
WV_{k+d+1}(T)&=[R_{k+d+1}(T)\ominus R_{k+1}(T)]^d,
\end{align*}
and for pressure by element bubbles

\vspace{-10pt}
\begin{align*}
WP_{k+d}(T)&=Q_{k+d}(T)\ominus Q_k(T).
\end{align*}
The velocity and pressure error spaces are constructed this way to satisfy the inf-sup condition shown in Lemma \ref{lem:inf-sup in com space}.

The corresponding global finite element spaces, defined by the degrees of freedom and local spaces, are given by

\vspace{-10pt}
\begin{align}
\label{eqn:space WV}
WV_{k+d+1}&=\{\hat{\boldsymbol{w}}\in [H_{0}^1(\Omega)]^d~|~ \hat{\boldsymbol{w}}_{|T}\in WV_{k+d+1}(T) \mbox{~for each~}T\in\mathcal{T}\},\\
\label{eqn:space WP}
WP_{k+d}&=\{\hat{r}\in L_0^2(\Omega)\cap H^1(\Omega)~|~ \hat{r}_{|T}\in WP_{k+d}(T) \mbox{~for each~}T\in\mathcal{T}\},\\
\label{eqn:space W(k+d)}
W_{k+d}&=WV_{k+d+1}\times WP_{k+d},
\end{align}
where $V_k\cap W_{k+d}=\{0\}$. Then the \textbf{error problem} is: Find $(\hat{\boldsymbol{e}}_{u},\hat{e}_{p})\in W_{k+d}$ such that

\vspace{-10pt}
\begin{align}\label{eqn:error problem}
a_1((\hat{\boldsymbol{e}}_{u},\hat{e}_{p}),(\hat{\boldsymbol{v}},\hat{q}))=f(\hat{\boldsymbol{v}})-a_1((\hat{\boldsymbol{u}},\hat{p}),(\hat{\boldsymbol{v}},\hat{q})),
\end{align}
for any $(\hat{\boldsymbol{v}},\hat{q})\in W_{k+d}$.

\subsection{Solvability of Error Problem}
The error problem is stable (in the sense of inf-sup condition) in $W_{k+d}$ from the following Lemma \ref{lem:inf-sup one layer in com space} and Lemma \ref{lem:inf-sup in com space}.
Let $\overline{P}_k$ be the set of homogeneous polynomials of degree $k\geq 1$. Then we define

\vspace{-10pt}
\begin{align*}
\overline{WV}_{k+j+1}=WV_{k+d+1}\cap[\overline{P}_{k+j+1}]^d,\quad \overline{WP}_{k+j}=WP_{k+d}\cap\overline{P}_{k+j},
\end{align*}
where $1\leq j\leq d$.

\begin{lemma}\label{lem:inf-sup one layer in com space}
Under Assumptions 1 and 2, there exist positive constants $\mu_j~(1\leq j\leq d)$ independent of $h$ such that

\vspace{-10pt}
\begin{align*}
\sup_{\hat{\boldsymbol{v}}\in \overline{WV}_{k+j+1}}\frac{b(\hat{\boldsymbol{v}},\hat{q})}{\| \hat{\boldsymbol{v}}\|_{1,\Omega}}\geq\mu_j\|\hat{q}\|,\quad \forall \hat{q}\in \overline{P}_{k+j},
\end{align*}
where $k\geq 1$.
\end{lemma}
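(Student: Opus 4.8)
I would prove this by an explicit, element‑local construction of the test field, so that no saturation‑type hypothesis or abstract compactness argument is invoked; the heart of the matter is a single integration by parts, and the only subtle point is degree/layer bookkeeping. Here I read $\overline{WV}_{k+j+1}$, $\overline{WP}_{k+j}$ and the right‑hand side $\overline{P}_{k+j}$ as the $j$‑th graded layers of the elementwise bubble spaces $WV_{k+d+1}(T)$, $WP_{k+d}(T)$; the structural fact to be exploited is that any such $\hat q$ satisfies $\hat q|_T\in WP_{k+d}(T)\subseteq Q_{k+d}(T)$, hence $\hat q$ vanishes on $\partial T$ for every $T\in\mathcal T$.

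Fix $\hat q$ as in the statement. On each $T\in\mathcal T$ write $\hat q|_T=b_T\,w_T$ with $w_T\in P_{k-1}(T)$ (in fact $w_T$ lies in the graded complement $P_{k-1}(T)\ominus P_{k-d-1}(T)$, so $\deg w_T\ge k-d$), choose a point $z_T\in\overline{T}$ compatible with the grading that defines the $j$‑th layer — a vertex of $T$, say — and set $\hat{\boldsymbol v}|_T:=\hat q\,(\boldsymbol x-z_T)=b_T\,(w_T(\boldsymbol x-z_T))$. Since $w_T(\boldsymbol x-z_T)\in[P_k(T)]^d$ we get $\hat{\boldsymbol v}|_T\in[Q_{k+d+1}(T)]^d$, and a degree count gives $\deg\hat{\boldsymbol v}|_T=\deg w_T+d+2\in\{k+2,\dots,k+d+1\}$, so $\hat{\boldsymbol v}|_T\notin R_{k+1}(T)$ and, with $z_T$ chosen along the grading, $\hat{\boldsymbol v}|_T$ in fact sits in the $j$‑th velocity layer whenever $\hat q|_T$ sits in the $j$‑th pressure layer. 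Because $\hat q$ vanishes on each $\partial T$, the pieces patch to a single $\hat{\boldsymbol v}\in\overline{WV}_{k+j+1}\subset[H_0^1(\Omega)]^d$. Verifying this membership — in particular that $z_T$ can be picked so that $\hat{\boldsymbol v}$ really lands in $\overline{WV}_{k+j+1}$ and not merely in $WV_{k+d+1}$ — is the one place I expect to have to argue carefully; once it is settled the rest is mechanical.

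Integrating by parts on each $T$ and using $\hat q|_{\partial T}=0$ together with $\nabla\!\cdot\!(\boldsymbol x-z_T)=d$ gives $\int_T\hat q\,\nabla\!\cdot\!\big(\hat q(\boldsymbol x-z_T)\big)=d\int_T\hat q^2+\tfrac12\int_T(\boldsymbol x-z_T)\!\cdot\!\nabla(\hat q^2)=d\int_T\hat q^2-\tfrac d2\int_T\hat q^2=\tfrac d2\|\hat q\|_{0,T}^2$, hence $b(\hat{\boldsymbol v},\hat q)=\tfrac d2\|\hat q\|^2$. For the denominator, $\|\hat{\boldsymbol v}\|_{1,\Omega}\simeq|\hat{\boldsymbol v}|_{1,\Omega}$ on $[H_0^1(\Omega)]^d$; from $\partial_i\big(\hat q(x_\ell-z_{T,\ell})\big)=(x_\ell-z_{T,\ell})\partial_i\hat q+\hat q\,\delta_{i\ell}$, the bound $|\boldsymbol x-z_T|\le h_T$ on $T$, and the inverse inequality $h_T|\hat q|_{1,T}\lesssim\|\hat q\|_{0,T}$ (valid on the local polynomial space, with a constant depending only on $k+d$ and the shape‑regularity of $\mathcal T$), one obtains $|\hat{\boldsymbol v}|_{1,T}\lesssim\|\hat q\|_{0,T}$ and so $|\hat{\boldsymbol v}|_{1,\Omega}\lesssim\|\hat q\|$. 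Therefore $\sup_{\hat{\boldsymbol v}'\in\overline{WV}_{k+j+1}}b(\hat{\boldsymbol v}',\hat q)/\|\hat{\boldsymbol v}'\|_{1,\Omega}\ge b(\hat{\boldsymbol v},\hat q)/\|\hat{\boldsymbol v}\|_{1,\Omega}\gtrsim\|\hat q\|$, with $\mu_j$ depending only on $d$, $k$ and the shape‑regularity constant, hence independent of $h$. (Assumptions 1 and 2 are the standing stability hypotheses of the paper; this particular estimate is purely local and does not itself require them, so I would keep them only for consistency with the surrounding development.)
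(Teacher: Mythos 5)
Your proposal rests on a misreading of the space $\overline{P}_{k+j}$.  The paper defines $\overline{P}_m$ to be the set of homogeneous polynomials of degree $m$ and then \emph{separately} sets $\overline{WP}_{k+j}=WP_{k+d}\cap\overline{P}_{k+j}$, which is a strict subset.  Lemma~\ref{lem:inf-sup one layer in com space} is asserted for every $\hat q\in\overline{P}_{k+j}$, and such a $\hat q$ has no reason to vanish on interelement faces.  Your construction $\hat{\boldsymbol v}|_T=\hat q\,(\boldsymbol x-z_T)$ only patches to a field in $[H_0^1(\Omega)]^d$, and the integration by parts only loses its boundary term, when $\hat q|_{\partial T}=0$ on every $T$; so your argument establishes the inequality only on the smaller space $\overline{WP}_{k+j}$, not on the one the lemma names.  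The paper's own proof (Appendix~A) is a macroelement argument in the spirit of Boffi (1994) for $d=2$ and Boffi (1997)/Stenberg (1987) for $d=3$: on each macroelement it builds an explicit velocity out of barycentric bubbles and carefully normalized Legendre polynomials (in 2D), or shows that orthogonality to all admissible velocity bubbles forces $\nabla\hat q$ to vanish on each tetrahedron (in 3D).  Assumptions~1 and~2 are exactly what make that macroelement argument run, so your remark that they are not needed is a symptom of the same misreading rather than a genuine simplification.

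A second, independent gap is the one you flag yourself: showing that $\hat{\boldsymbol v}$ lies in the graded layer $\overline{WV}_{k+j+1}$, and not merely in $WV_{k+d+1}$.  The map $\hat q\mapsto\hat q\,(\boldsymbol x-z_T)$ does not obviously respect the homogeneous-degree grading ($\boldsymbol x-z_T$ is affine, not linear, so it mixes degrees), and your proposal leaves this unresolved.  Setting both issues aside, the mechanism itself — integration by parts against $\hat q\,(\boldsymbol x-z_T)$ plus the local inverse inequality $h_T|\hat q|_{1,T}\lesssim\|\hat q\|_{0,T}$ — is a sound and classical way to prove an inf-sup condition between \emph{bubble} pressures and \emph{bubble} velocities; applied directly to $WP_{k+d}$ and $WV_{k+d+1}$ (without the layer decomposition) it would yield an alternative, assumption-free proof of Lemma~\ref{lem:inf-sup in com space}.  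But as a proof of Lemma~\ref{lem:inf-sup one layer in com space} as stated, it does not go through.
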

\begin{proof}
The proof can be found in Appendix A.
\end{proof}

\begin{lemma}\label{lem:inf-sup in com space}
There exists a positive constant $\mu$ independent of $h$ such that

\vspace{-10pt}
\begin{align}\label{eqn:inf-sup in com space}
\sup_{\hat{\boldsymbol{v}}\in WV_{k+d+1}}\frac{b(\hat{\boldsymbol{v}},\hat{q})}{\| \hat{\boldsymbol{v}}\|_{1,\Omega}}\geq\mu\|\hat{q}\|,\quad \forall \hat{q}\in WP_{k+d},
\end{align}
where $k\geq 1$.
\end{lemma}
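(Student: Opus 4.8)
The plan is to deduce the global inf-sup condition \eqref{eqn:inf-sup in com space} on $WV_{k+d+1}\times WP_{k+d}$ from the homogeneous-degree inf-sup estimates provided by Lemma~\ref{lem:inf-sup one layer in com space}, by a local, element-by-element argument together with the fact that both $WV_{k+d+1}(T)$ and $WP_{k+d}(T)$ consist of functions supported on (the closure of) a single element or its faces, so that the global $b(\cdot,\cdot)$ decouples into a sum over $T\in\mathcal{T}$. First I would reduce to a local statement: since every $\hat q\in WP_{k+d}$ restricts to $WP_{k+d}(T)=Q_{k+d}(T)\ominus Q_k(T)$, and any $\hat{\boldsymbol v}$ built elementwise from bubbles lies in $[H^1_0(\Omega)]^d$ automatically, it suffices to prove that there is a constant $\mu>0$, independent of $h$ and of $T$ (using shape-regularity), such that
\begin{align*}
\sup_{\hat{\boldsymbol v}\in WV_{k+d+1}(T)}\frac{b_T(\hat{\boldsymbol v},\hat q)}{\|\hat{\boldsymbol v}\|_{1,T}}\geq \mu\,\|\hat q\|_{0,T},\qquad \forall\,\hat q\in WP_{k+d}(T),
\end{align*}
where $b_T(\hat{\boldsymbol v},\hat q)=\int_T \hat q\,\nabla\cdot\hat{\boldsymbol v}$; summing the local test functions then yields the global bound after a Cauchy–Schwarz step, because $\|\nabla\cdot\hat{\boldsymbol v}\|^2=\sum_T\|\nabla\cdot\hat{\boldsymbol v}\|_{0,T}^2$ and $\|\hat q\|^2=\sum_T\|\hat q\|_{0,T}^2$.

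Next I would set up a scaling argument to pass to a fixed reference simplex $\hat T$. Under the affine map $T\to\hat T$, the spaces $WV_{k+d+1}(T)$, $WP_{k+d}(T)$ are mapped to their reference counterparts, and the usual scaling estimates $\|\hat{\boldsymbol v}\|_{1,T}\simeq h_T^{d/2-1}\|\cdot\|_{1,\hat T}$, $\|\hat q\|_{0,T}\simeq h_T^{d/2}\|\cdot\|_{0,\hat T}$, $b_T\simeq h_T^{d-1}b_{\hat T}$ (with constants depending only on shape-regularity) show that the local inf-sup constant on $T$ is comparable to that on $\hat T$; so it is enough to establish the inf-sup property once, on $\hat T$, for the pair $\bigl(WV_{k+d+1}(\hat T),\,WP_{k+d}(\hat T)\bigr)$. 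On the reference element this is a purely finite-dimensional, $h$-free statement.

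The main obstacle, and the place where Lemma~\ref{lem:inf-sup one layer in com space} must be invoked, is the reference-element inf-sup estimate itself, because $WP_{k+d}(\hat T)=Q_{k+d}(\hat T)\ominus Q_k(\hat T)$ is a space of inhomogeneous polynomials while Lemma~\ref{lem:inf-sup one layer in com space} only controls the homogeneous slices $\overline{WP}_{k+j}$, $1\le j\le d$. I would handle this by decomposing $\hat q\in WP_{k+d}(\hat T)$ into its homogeneous components, $\hat q=\sum_{j=1}^{d}\hat q_j$ with $\hat q_j\in\overline{WP}_{k+j}$ (the lower components below degree $k+1$ are annihilated precisely because we quotient by $Q_k(\hat T)$), pick for each $j$ a test function $\hat{\boldsymbol v}_j\in\overline{WV}_{k+j+1}$ with $b_{\hat T}(\hat{\boldsymbol v}_j,\hat q_j)\ge \mu_j\|\hat q_j\|_{1,\hat T}^{-1}\cdot(\text{stuff})$—more precisely, normalize so that $\|\hat{\boldsymbol v}_j\|_{1,\hat T}=1$ and $b_{\hat T}(\hat{\boldsymbol v}_j,\hat q_j)\ge\mu_j\|\hat q_j\|_{0,\hat T}$—and then combine them as $\hat{\boldsymbol v}=\sum_j \alpha_j\hat{\boldsymbol v}_j$ with weights $\alpha_j$ chosen to make the cross terms controllable. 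The cross terms $b_{\hat T}(\hat{\boldsymbol v}_i,\hat q_j)$ for $i\ne j$ are the crux: one route is to use near-orthogonality of homogeneous polynomials of different degrees under a weighted inner product, or simply to bound them by continuity, $|b_{\hat T}(\hat{\boldsymbol v}_i,\hat q_j)|\le C\|\hat q_j\|_{0,\hat T}$, and then absorb them by choosing the $\alpha_j$ to decay geometrically (a Young's-inequality / triangular-system argument), so that $b_{\hat T}(\hat{\boldsymbol v},\hat q)\ge c\sum_j\|\hat q_j\|_{0,\hat T}^2=c\|\hat q\|_{0,\hat T}^2$ and $\|\hat{\boldsymbol v}\|_{1,\hat T}\le C\|\hat q\|_{0,\hat T}$, giving the reference inf-sup constant $\mu\sim c/C$ in terms of the $\mu_j$. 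Finally I would track the constants through the scaling to confirm $h$-independence and conclude.
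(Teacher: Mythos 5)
Your approach is structurally different from the paper's. The paper's proof stays entirely global: it observes $\overline{WP}_{k+j}\subset\overline{P}_{k+j}$, invokes Lemma~\ref{lem:inf-sup one layer in com space} for each slice $j=1,\dots,d$ to obtain mesh-independent inf-sup constants $\mu_j$ on the pairs $\bigl(\overline{WV}_{k+j+1},\overline{WP}_{k+j}\bigr)$, sets $\mu=\min_j\mu_j$, and then ``completes the proof'' from the direct-sum decompositions $WV_{k+d+1}=\bigoplus_j\overline{WV}_{k+j+1}$, $WP_{k+d}=\bigoplus_j\overline{WP}_{k+j}$. It neither localizes to a single simplex nor spells out what happens to the cross terms $b(\hat{\boldsymbol v}_i,\hat q_j)$ for $i\neq j$ when test functions from different slices are combined; your triangular/Young's-inequality step makes that part explicit, which is a genuine improvement in rigor over the published proof.

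However, your localize-then-scale reduction creates a mismatch with the lemma you propose to invoke. Lemma~\ref{lem:inf-sup one layer in com space} is a statement about the mesh-wide spaces $\overline{WV}_{k+j+1}$ and $\overline{P}_{k+j}$, and the Appendix~A proof is a macroelement construction (a three-triangle patch for $d=2$, an edge star for $d=3$) relying on Assumptions~1 and~2; the velocity test fields it builds include face bubbles supported across several elements and are not confined to a single $T$. It does not yield, and cannot be read off as, a single-simplex inf-sup for $\bigl(WV_{k+d+1}(\hat T),WP_{k+d}(\hat T)\bigr)$. So the step where you write that Lemma~\ref{lem:inf-sup one layer in com space} supplies the reference-element inf-sup estimate is a gap: you would instead need to prove an element-local inf-sup from scratch, using only element-supported velocities so that the per-element decoupling of $b(\cdot,\cdot)$ you rely on actually holds. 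Whether that local estimate is true is not established anywhere in the paper, and the macroelement structure of the Lemma~\ref{lem:inf-sup one layer in com space} proof is itself a warning that for general $\hat q\in\overline{P}_{k+j}$ single-element test fields do not suffice. You should either prove the local inf-sup independently for the smaller bubble space $WP_{k+d}(T)$ (the fact that these pressures vanish on $\partial T$ may rescue the argument, but this needs verification), or keep the paper's global slice decomposition and supply only the cross-term bookkeeping you correctly identify as missing.
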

\begin{proof}
It follows from $\overline{WP}_{k+j}\subset\overline{P}_{k+j}$ and Lemma \ref{lem:inf-sup one layer in com space} that

\vspace{-10pt}
\begin{align*}
\sup_{\hat{\boldsymbol{v}}\in \overline{WV}_{k+j+1}}\frac{b(\hat{\boldsymbol{v}},\hat{q})}{\| \hat{\boldsymbol{v}}\|_{1,\Omega}}\geq\mu_j\|\hat{q}\|,\quad \forall \hat{q}\in \overline{WP}_{k+j},
\end{align*}
for $k\geq 1$ and $1\leq j\leq d$. Then set $\mu=\min\limits_{1\leq j\leq d}\mu_j$ and complete the proof from the facts

\vspace{-10pt}
\begin{align*}
WV_{k+d+1}=\bigoplus_{j=1}^{d} \overline{WV}_{k+j+1},\quad WP_{k+d}=\bigoplus_{j=1}^{d} \overline{WP}_{k+j}.
\end{align*}
\end{proof}

From Lemma \ref{lem:inf-sup in com space}, the proof of the following lemma is similar to that of Proposition 4.69 in \cite{Verfurth2013}. For the sake of completeness, we give the proof here.
\begin{lemma}\label{lem:inf-sup B}
The bilinear form $a_1((\hat{\boldsymbol{v}},\hat{q}),(\hat{\boldsymbol{w}},\hat{r}))$ satisfies the estimate

\vspace{-10pt}
\begin{align}\label{eqn:inf-sub B error problem}
\inf_{(\hat{\boldsymbol{v}},\hat{q})\in W_{k+d}\backslash\{0\}}
\sup_{(\hat{\boldsymbol{w}},\hat{r})\in W_{k+d}\backslash\{0\}}
\frac{a_1((\hat{\boldsymbol{v}},\hat{q}),(\hat{\boldsymbol{w}},\hat{r}))}
{\|(\hat{\boldsymbol{v}},\hat{q})\|_V\|(\hat{\boldsymbol{w}},\hat{r})\|_V}
\geq\frac{\mu^2}{(1+\mu)^2}
\end{align}
where $\mu$ is a constant defined in Lemma \ref{lem:inf-sup in com space}.
\end{lemma}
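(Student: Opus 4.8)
The plan is to follow the classical Brezzi-type argument for saddle-point problems, exactly as in Proposition 4.69 of \cite{Verfurth2013}, but carried out entirely within the discrete pair of spaces $W_{k+d}=WV_{k+d+1}\times WP_{k+d}$ so that the only structural input needed is the discrete inf-sup condition (\ref{eqn:inf-sup in com space}) together with the coercivity of $a(\cdot,\cdot)$ on the (discretely) divergence-free subspace. Concretely, fix an arbitrary $(\hat{\boldsymbol{v}},\hat{q})\in W_{k+d}\setminus\{0\}$; I want to produce a test function $(\hat{\boldsymbol{w}},\hat{r})\in W_{k+d}$ with $a_1((\hat{\boldsymbol{v}},\hat{q}),(\hat{\boldsymbol{w}},\hat{r}))\gtrsim \|(\hat{\boldsymbol{v}},\hat{q})\|_V\|(\hat{\boldsymbol{w}},\hat{r})\|_V$ with the explicit constant $\mu^2/(1+\mu)^2$.

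The construction of the test function proceeds in the standard three ingredients. First, taking $(\hat{\boldsymbol{w}},\hat{r})=(\hat{\boldsymbol{v}},-\hat{q})$ gives $a_1((\hat{\boldsymbol{v}},\hat{q}),(\hat{\boldsymbol{v}},-\hat{q}))=a(\hat{\boldsymbol{v}},\hat{\boldsymbol{v}})=\|\nabla\hat{\boldsymbol{v}}\|^2$, since the two $b$-terms cancel; note here $a(\hat{\boldsymbol{v}},\hat{\boldsymbol{v}})=\|\nabla\hat{\boldsymbol{v}}\|^2$ by definition of $a$ and of $\|\cdot\|$. Second, by the inf-sup condition (\ref{eqn:inf-sup in com space}) applied to $\hat{q}\in WP_{k+d}$, I can choose $\hat{\boldsymbol{z}}\in WV_{k+d+1}$ with $\|\hat{\boldsymbol{z}}\|_{1,\Omega}=\|\hat{q}\|$ (a normalization; since $\|\cdot\|_{1,\Omega}$ is a norm on $WV_{k+d+1}$ and the $H^1$-seminorm controls it via Poincaré on $H_0^1$, I will instead normalize so that $\|\nabla\hat{\boldsymbol{z}}\|\le\|\hat{q}\|$ and $b(\hat{\boldsymbol{z}},\hat{q})\ge\mu\|\hat{q}\|^2$, which is the cleaner route) and test with $(\hat{\boldsymbol{w}},\hat{r})=(-\hat{\boldsymbol{z}},0)$: this yields $a_1((\hat{\boldsymbol{v}},\hat{q}),(-\hat{\boldsymbol{z}},0))=-a(\hat{\boldsymbol{v}},\hat{\boldsymbol{z}})+b(\hat{\boldsymbol{z}},\hat{q})\ge -\|\nabla\hat{\boldsymbol{v}}\|\,\|\nabla\hat{\boldsymbol{z}}\|+\mu\|\hat{q}\|^2\ge -\|\nabla\hat{\boldsymbol{v}}\|\,\|\hat{q}\|+\mu\|\hat{q}\|^2$. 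Third, I combine the two: for a parameter $\delta>0$ to be optimized, set $(\hat{\boldsymbol{w}},\hat{r})=(\hat{\boldsymbol{v}}-\delta\hat{\boldsymbol{z}},-\hat{q})$, so that
\begin{align*}
a_1((\hat{\boldsymbol{v}},\hat{q}),(\hat{\boldsymbol{w}},\hat{r}))\ge \|\nabla\hat{\boldsymbol{v}}\|^2-\delta\|\nabla\hat{\boldsymbol{v}}\|\,\|\hat{q}\|+\delta\mu\|\hat{q}\|^2.
\end{align*}
Using Young's inequality on the cross term ($\delta\|\nabla\hat{\boldsymbol{v}}\|\,\|\hat{q}\|\le \tfrac12\|\nabla\hat{\boldsymbol{v}}\|^2+\tfrac{\delta^2}{2}\|\hat{q}\|^2$) this is bounded below by $\tfrac12\|\nabla\hat{\boldsymbol{v}}\|^2+\delta(\mu-\tfrac{\delta}{2})\|\hat{q}\|^2$, which is a positive multiple of $\|(\hat{\boldsymbol{v}},\hat{q})\|_V^2$ provided $0<\delta<2\mu$.

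It remains to bound $\|(\hat{\boldsymbol{w}},\hat{r})\|_V$ from above by a multiple of $\|(\hat{\boldsymbol{v}},\hat{q})\|_V$ and then to choose $\delta$ so that the ratio of the lower bound to the product of norms equals exactly $\mu^2/(1+\mu)^2$; this bookkeeping is the only slightly delicate part, and is precisely where the choice $\delta=\mu/(1+\mu)$ (or a close variant, mirroring the proof in \cite{Verfurth2013}) will be dictated. Indeed $\|\hat{\boldsymbol{w}}\|_{1,\Omega}\le\|\hat{\boldsymbol{v}}\|_{1,\Omega}+\delta\|\hat{\boldsymbol{z}}\|_{1,\Omega}$ and $\|\hat{\boldsymbol{z}}\|_{1,\Omega}$ is controlled by $\|\hat{q}\|/\mu$ (from the inf-sup choice), so $\|(\hat{\boldsymbol{w}},\hat{r})\|_V\le C(\delta,\mu)\|(\hat{\boldsymbol{v}},\hat{q})\|_V$; combining with the coercivity lower bound and optimizing in $\delta$ gives the stated constant. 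The main obstacle I anticipate is not conceptual but quantitative: getting the constant to come out as exactly $\mu^2/(1+\mu)^2$ rather than some other admissible constant requires the normalizations of $\hat{\boldsymbol{z}}$ and the split parameter $\delta$ to be matched carefully (one natural choice is to take $\|\nabla\hat{\boldsymbol{z}}\|\le\|\hat q\|$, $b(\hat{\boldsymbol z},\hat q)\ge\mu\|\hat q\|^2$, and $\delta=\mu/(1+\mu)$), so I would carry out that optimization explicitly rather than citing it. Everything else — the cancellation of the $b$-terms, the Cauchy–Schwarz estimate $a(\hat{\boldsymbol{v}},\hat{\boldsymbol{z}})\le\|\nabla\hat{\boldsymbol v}\|\|\nabla\hat{\boldsymbol z}\|$, and the Young inequality — is routine.
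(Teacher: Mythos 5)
Your overall plan (build a test function $(\hat{\boldsymbol{v}}-\delta\hat{\boldsymbol{z}},\pm\hat{q})$ from the original function plus a correction supplied by the discrete inf--sup condition, then optimize $\delta$) is exactly the paper's approach, but there is a sign error on the pressure component that breaks the key coercivity step. With the paper's convention
\[
a_1((\boldsymbol{w},r),(\boldsymbol{v},q))=a(\boldsymbol{w},\boldsymbol{v})-b(\boldsymbol{v},r)+b(\boldsymbol{w},q),
\]
the two $b$-terms cancel on the \emph{diagonal}: $a_1((\hat{\boldsymbol{v}},\hat{q}),(\hat{\boldsymbol{v}},\hat{q}))=a(\hat{\boldsymbol{v}},\hat{\boldsymbol{v}})-b(\hat{\boldsymbol{v}},\hat{q})+b(\hat{\boldsymbol{v}},\hat{q})=\|\nabla\hat{\boldsymbol{v}}\|^2$. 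Your choice $(\hat{\boldsymbol{v}},-\hat{q})$ instead gives
\[
a_1((\hat{\boldsymbol{v}},\hat{q}),(\hat{\boldsymbol{v}},-\hat{q}))=a(\hat{\boldsymbol{v}},\hat{\boldsymbol{v}})-b(\hat{\boldsymbol{v}},\hat{q})+b(\hat{\boldsymbol{v}},-\hat{q})=\|\nabla\hat{\boldsymbol{v}}\|^2-2b(\hat{\boldsymbol{v}},\hat{q}),
\]
so the $b$-terms \emph{reinforce} rather than cancel. That extra $-2b(\hat{\boldsymbol{v}},\hat{q})$ has no sign control, and it persists in your third step: testing against $(\hat{\boldsymbol{v}}-\delta\hat{\boldsymbol{z}},-\hat{q})$ yields $\|\nabla\hat{\boldsymbol{v}}\|^2-2b(\hat{\boldsymbol{v}},\hat{q})-\delta a(\hat{\boldsymbol{v}},\hat{\boldsymbol{z}})+\delta b(\hat{\boldsymbol{z}},\hat{q})$, not the clean quadratic lower bound you wrote down. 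You appear to have had the \emph{symmetric} Stokes form $a(\boldsymbol{w},\boldsymbol{v})-b(\boldsymbol{v},r)-b(\boldsymbol{w},q)$ in mind, for which $(\hat{\boldsymbol{v}},-\hat{q})$ is indeed the right choice.

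The fix is immediate: take $(\hat{\boldsymbol{w}},\hat{r})=(\hat{\boldsymbol{v}}-\delta\hat{\boldsymbol{z}},\hat{q})$ with $+\hat{q}$ in the second slot. Then the $b(\hat{\boldsymbol{v}},\hat{q})$ terms cancel and you obtain exactly $\|\nabla\hat{\boldsymbol{v}}\|^2-\delta a(\hat{\boldsymbol{v}},\hat{\boldsymbol{z}})+\delta b(\hat{\boldsymbol{z}},\hat{q})\ge\|\nabla\hat{\boldsymbol{v}}\|^2-\delta\|\nabla\hat{\boldsymbol{v}}\|\,\|\hat{q}\|+\delta\mu\|\hat{q}\|^2$, after which the rest of your outline (Young's inequality, bound on $\|(\hat{\boldsymbol{w}},\hat{r})\|_V$, and optimization over $\delta$) coincides with the paper's proof; the paper's value is $\delta=2\mu/(1+\mu^2)$ under its normalization, with the bound on the test-function norm contributing $(1+\mu)^2/(1+\mu^2)$ so that the quotient simplifies to $\mu^2/(1+\mu)^2$. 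Your remark about the mismatch between $\|\cdot\|_{1,\Omega}$ in the inf--sup statement and $\|\nabla\cdot\|$ in the normalization is valid; both you and the paper implicitly absorb a Poincaré constant here.
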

\begin{proof}
Let $(\hat{\boldsymbol{v}},\hat{q})\in W_{k+d}\backslash\{0\}$ be an arbitrary but fixed function. The definition of $a_1(\cdot,\cdot)$ immediately implies that

\vspace{-10pt}
\begin{align*}
a_1((\hat{\boldsymbol{v}},\hat{q}),(\hat{\boldsymbol{v}},\hat{q}))=\|\nabla \hat{\boldsymbol{v}}\|.
\end{align*}
Due to Lemma \ref{lem:inf-sup in com space}, there is a velocity field $\hat{\boldsymbol{w}}_{\hat{q}}\in WV_{k+d+1}$ with $\|\nabla \hat{\boldsymbol{w}}_{\hat{q}}\|=1$ such that

\vspace{-10pt}
\begin{align*} \int_{\Omega}\hat{q}\nabla\cdot\hat{\boldsymbol{w}}_{\hat{q}}\geq\mu\|\hat{q}\|.
\end{align*}
We therefore obtain for every $\delta>0$

\vspace{-10pt}
\begin{align*}
a_1((\hat{\boldsymbol{v}},\hat{q}),(\hat{\boldsymbol{v}}-\delta\|\hat{q}\|\hat{\boldsymbol{w}}_{\hat{q}},\hat{q}))=&a_1((
\hat{\boldsymbol{v}},\hat{q}),(\hat{\boldsymbol{v}},\hat{q}))-\delta\|\hat{q}\|a_1((\hat{\boldsymbol{v}},\hat{q}),(\hat{\boldsymbol{w}}_{\hat{q}},0))\\
=&\|\nabla\hat{\boldsymbol{v}}\|^2-\delta\|\hat{q}\|\int_{\Omega}\nabla\hat{\boldsymbol{v}}:\nabla\hat{\boldsymbol{w}}_{\hat{q}}
+\delta\|\hat{q}\|\int_{\Omega}\hat{q}\nabla\cdot\hat{\boldsymbol{w}}_{\hat{q}}\\
\geq&\|\nabla\hat{\boldsymbol{v}}\|^2-\delta\|\nabla\hat{\boldsymbol{v}}\|\|\hat{q}\|+\delta\mu\|\hat{q}\|^2\\
\geq&(1-\frac{\delta}{2\mu})\|\nabla\hat{\boldsymbol{v}}\|^2+\frac{1}{2}\delta\mu\|\hat{q}\|^2.
\end{align*}
The choice of $\delta=\frac{2\mu}{1+\mu^2}$ yields

\vspace{-10pt}
\begin{align*}
a_1((\hat{\boldsymbol{v}},\hat{q}),(\hat{\boldsymbol{v}}-\delta\|\hat{q}\|\hat{\boldsymbol{w}}_{\hat{q}},\hat{q}))\geq\frac{\mu^2}{1+\mu^2}
\|(\hat{\boldsymbol{v}},\hat{q})\|_V^2.
\end{align*}
On the other hand, we have

\vspace{-10pt}
\begin{align*}
\|(\hat{\boldsymbol{v}}-\delta\|\hat{q}\|\hat{\boldsymbol{w}}_{\hat{q}},\hat{q})\|_V
\leq&\|(\hat{\boldsymbol{v}},\hat{q})\|_V
+\|(\delta\|\hat{q}\|\hat{\boldsymbol{w}}_{\hat{q}},0)\|_V\\
=&\|(\hat{\boldsymbol{v}},\hat{q})\|_V+\delta\|\hat{q}\|\|\nabla\hat{\boldsymbol{w}}_{\hat{q}}\|\\
=&\|(\hat{\boldsymbol{v}},\hat{q})\|_V+\delta\|\hat{q}\|\\
\leq&(1+\delta)\|(\hat{\boldsymbol{v}},\hat{q})\|_V\\
=&\frac{1+\mu^2+2\mu}{1+\mu^2}\|(\hat{\boldsymbol{v}},\hat{q})\|_V.
\end{align*}
Combining these estimates we arrive at

\vspace{-10pt}
\begin{align*}
\sup_{(\hat{\boldsymbol{w}},\hat{r})\in W_{k+d}\backslash\{0\}}
\frac{a_1((\hat{\boldsymbol{v}},\hat{q}),(\hat{\boldsymbol{w}},\hat{r}))}
{\|(\hat{\boldsymbol{v}},\hat{q})\|_V\|(\hat{\boldsymbol{w}},\hat{r})\|_V}
\geq&\frac{a_1((\hat{\boldsymbol{v}},\hat{q}),(\hat{\boldsymbol{v}}-\delta\|\hat{q}\|\hat{\boldsymbol{w}}_{\hat{q}},\hat{q}))}
{\|(\hat{\boldsymbol{v}},\hat{q})\|_V\|(\hat{\boldsymbol{v}}
-\delta\|\hat{q}\|\hat{\boldsymbol{w}}_{\hat{q}},\hat{q})\|_V}\\
\geq&\frac{\mu^2}{1+\mu^2}.
\end{align*}
Since $(\hat{\boldsymbol{v}},\hat{q})\in W_{k+d}\backslash\{0\} $ was arbitrary, this completes the proof.
\end{proof}

\begin{theorem}\label{thm:unique of error problem}
The error problem (\ref{eqn:error problem}) has a unique solution.
\end{theorem}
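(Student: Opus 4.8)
The plan is to reduce the statement to an elementary finite-dimensional linear-algebra argument, with the inf-sup estimate of Lemma~\ref{lem:inf-sup B} as the only substantive input. First I would record the structural facts: the space $W_{k+d}$ defined in (\ref{eqn:space W(k+d)}) is finite-dimensional, being assembled from finitely many local polynomial bubble spaces over a finite triangulation, and the error problem (\ref{eqn:error problem}) has the abstract form ``find $(\hat{\boldsymbol{e}}_u,\hat{e}_p)\in W_{k+d}$ such that $a_1((\hat{\boldsymbol{e}}_u,\hat{e}_p),(\hat{\boldsymbol{v}},\hat{q}))=F(\hat{\boldsymbol{v}},\hat{q})$ for all $(\hat{\boldsymbol{v}},\hat{q})\in W_{k+d}$'', where $F(\hat{\boldsymbol{v}},\hat{q}):=f(\hat{\boldsymbol{v}})-a_1((\hat{\boldsymbol{u}},\hat{p}),(\hat{\boldsymbol{v}},\hat{q}))$ is a bounded linear functional on $W_{k+d}$ by (\ref{eqn:B continous}) and the Cauchy--Schwarz inequality. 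Choosing a basis of $W_{k+d}$, this is equivalent to a square linear system $A\mathbf{x}=\mathbf{b}$, with $A$ the stiffness matrix of $a_1$ restricted to $W_{k+d}$; since the trial and test spaces coincide, $A$ is genuinely square.

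Next I would show $A$ is nonsingular, equivalently that the homogeneous error problem admits only the trivial solution. Suppose $(\hat{\boldsymbol{v}},\hat{q})\in W_{k+d}$ satisfies $a_1((\hat{\boldsymbol{v}},\hat{q}),(\hat{\boldsymbol{w}},\hat{r}))=0$ for every $(\hat{\boldsymbol{w}},\hat{r})\in W_{k+d}$. Then the supremum appearing in (\ref{eqn:inf-sub B error problem}) is zero for this $(\hat{\boldsymbol{v}},\hat{q})$; because the inf-sup constant $\mu^{2}/(1+\mu)^{2}$ is strictly positive, Lemma~\ref{lem:inf-sup B} forces $\|(\hat{\boldsymbol{v}},\hat{q})\|_V=0$, and then the definition (\ref{eqn:norm V}) of the norm gives $\nabla\hat{\boldsymbol{v}}=\boldsymbol{0}$ and $\hat{q}=0$, hence $(\hat{\boldsymbol{v}},\hat{q})=0$ (using $\hat{\boldsymbol{v}}\in[H_0^1(\Omega)]^d$). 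Thus $A$ is injective, and a square injective matrix is invertible.

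Finally, invertibility of $A$ yields existence and uniqueness of the pair $(\hat{\boldsymbol{e}}_u,\hat{e}_p)\in W_{k+d}$ solving (\ref{eqn:error problem}) for the prescribed right-hand side, which is the assertion of the theorem. I expect no real obstacle in this argument: all the difficulty has already been discharged in constructing the bubble spaces and proving the chain of discrete inf-sup conditions, Lemmas~\ref{lem:inf-sup one layer in com space}, \ref{lem:inf-sup in com space}, and \ref{lem:inf-sup B}. The only points requiring a moment's care are (i) to note explicitly that test and trial spaces are equal, so that injectivity is equivalent to bijectivity without any further compatibility or Fredholm hypothesis, and (ii) to invoke (\ref{eqn:norm V}) so that vanishing $\|\cdot\|_V$-norm really does force \emph{both} components to vanish.
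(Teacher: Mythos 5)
Your proposal is correct, and it relies on the same key ingredient as the paper, namely the discrete inf-sup bound of Lemma~\ref{lem:inf-sup B}, but the surrounding machinery is different. The paper's proof invokes the abstract Babu\v{s}ka theorem (Theorem 5.2.1 of the cited 1972 reference), which requires verifying continuity, the inf-sup bound, and a transposed nondegeneracy condition; to address the last of these, the paper displays $a_1((\hat{\boldsymbol{v}},\hat{q}),(\hat{\boldsymbol{v}},\hat{q}))=\|\nabla\hat{\boldsymbol{v}}\|^2>0$ for $\hat{\boldsymbol{v}}\neq0$, which leaves the case $\hat{\boldsymbol{v}}=\boldsymbol{0}$, $\hat{q}\neq0$ unaddressed (harmless here only because trial and test spaces coincide and are finite-dimensional, so the transposed condition follows from the inf-sup bound anyway). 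Your route sidesteps that entirely: you reduce to a square linear system, show injectivity directly from the inf-sup constant being strictly positive, and then use that a square injective matrix is bijective. This is more elementary and more self-contained than the paper's appeal to the Babu\v{s}ka theorem, at the cost of making explicit the finite-dimensional reduction that the paper leaves implicit. Both arguments are sound; yours is arguably the cleaner write-up for the finite-dimensional setting, and it makes transparent exactly why no extra ``surjectivity''-type hypothesis is needed when $W_{k+d}$ serves as both trial and test space.
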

\begin{proof}
For the system (\ref{eqn:error problem}), one can easily check that $a_1(\cdot,\cdot)$ is a continuous bilinear form on $W_{k+d}\times W_{k+d}\subset V\times V$ by (\ref{eqn:B continous}) and satisfies the inf-sup condition by Lemma \ref{lem:inf-sup B}. In addition, $f(\hat{\boldsymbol{v}})-a_1((\hat{\boldsymbol{u}},\hat{p}),(\hat{\boldsymbol{v}},\hat{q}))$ is a continuous linear functional on $W_{k+d}$ and the bilinear form $a_1(\cdot,\cdot)$ satisfies

\vspace{-10pt}
\begin{align*}
a_1((\hat{\boldsymbol{v}},\hat{q}),(\hat{\boldsymbol{v}},\hat{q}))=\|\nabla \hat{\boldsymbol{v}}\|^2\geq C\|\hat{\boldsymbol{v}}\|^2>0,\quad \hat{\boldsymbol{v}}\neq 0,
\end{align*}
by Poincare's inequalities.
So by Theorem 5.2.1 in \cite{Babuska1972}, the scheme (\ref{eqn:error problem}) has a unique solution.
\end{proof}

\section{\textit{A Posteriori} Error Estimation}

In this section, a quasi-interpolant based on moment conditions will be shown in Lemma~\ref{lem:quasi-interpolant v}, which is used to get the \textit{a posteriori} error estimate shown in Theorem~\ref{thm:equivalence hat e}.
\begin{lemma}\label{lem:quasi-interpolant v}
Given $\boldsymbol{v}\in [H^1(\Omega)]^d$, there exits a $\hat{\boldsymbol{v}}\in VV_{k+1}$ and $\hat{\boldsymbol{w}}\in WV_{k+d+1}$ such that
\begin{itemize}
\item[(1)]$\int_T(\boldsymbol{v}-\hat{\boldsymbol{v}}-\hat{\boldsymbol{w}})\cdot\boldsymbol{\kappa}=0 \mbox{~for all~} \boldsymbol{\kappa}\in [P_k(T)]^d \mbox{~and~} T\in\mathcal{T}$.
\item[(2)]$\int_F(\boldsymbol{v}-\hat{\boldsymbol{v}}-\hat{\boldsymbol{w}})\cdot\boldsymbol{\kappa}=0 \mbox{~for all~} \boldsymbol{\kappa}\in [P_{k+1}(F)]^d \mbox{~and~} F\in\mathcal{F}_I$.
\item[(3)] $|\boldsymbol{v}-\hat{\boldsymbol{v}}-\hat{\boldsymbol{w}}|_{m,T}\leq C_{\mathcal{T}}h_T^{1-m}|\boldsymbol{v}|_{1,\Omega_T}$ for $m=0,1$, where $\Omega_T$ is a local patch of elements containing $T$.
\item[(4)] $|\boldsymbol{v}-\hat{\boldsymbol{v}}-\hat{\boldsymbol{w}}|_{0,F}\leq C_{\mathcal{T}}h_F^{1/2}|\boldsymbol{v}|_{1,\Omega_F}$, where $h_F$ is the diameter of $F\in\mathcal{F}$, and $\Omega_F=\Omega_T$ for some $T\in\mathcal{T}$ with $F\subset\partial T$.
\item[(5)] $|\hat{\boldsymbol{w}}|_{1,T}\leq C_{\mathcal{T}}|\boldsymbol{v}|_{1,\Omega_T}$ for each $T\in\mathcal{T}$.
\end{itemize}
where $C_{\mathcal{T}}$ depends only on the dimension $d$, polynomial degree $k$, and the shape-regularity of $\mathcal{T}$.
\end{lemma}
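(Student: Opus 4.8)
The plan is to construct $\hat{\boldsymbol{v}}$ and $\hat{\boldsymbol{w}}$ component by component, so it suffices to treat the scalar case $v \in H^1(\Omega)$ and produce $\hat v \in VV_{k+1}$ (scalar version, i.e. the $P_{k+1}$ Lagrange space) and $\hat w \in R_{k+d+1}$, deferring the vector statement to a componentwise application. First I would recall the classical Scott–Zhang (or Clément-type) quasi-interpolant $\hat v = I_{SZ} v \in VV_{k+1}$, which satisfies the local stability and approximation bounds $|v - \hat v|_{m,T} \le C h_T^{1-m}|v|_{1,\Omega_T}$ for $m=0,1$ and the trace bound $|v-\hat v|_{0,F}\le C h_F^{1/2}|v|_{1,\Omega_F}$; this is standard and gives a preliminary version of (3) and (4) with $\hat w$ not yet present. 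The role of $\hat w \in WV_{k+d+1}$ is then a correction whose only job is to enforce the moment conditions (1) and (2), which $\hat v$ alone cannot satisfy.

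The key step is the construction of the bubble correction. Set $g := v - \hat v$. On each element $T$, I want to choose $\hat w|_T \in R_{k+d+1}(T)\ominus R_{k+1}(T)$ so that $\int_T (g - \hat w)\boldsymbol\kappa = 0$ for all $\kappa \in P_k(T)$ and $\int_F (g-\hat w)\kappa = 0$ for all $\kappa \in P_{k+1}(F)$, $F \in \mathcal F_I$. By the preceding Lemma, a function in $R_{k+d+1}(T)$ is uniquely determined by its moments $\int_S \hat w\, \kappa$ for $\kappa \in P_{k+d+1-\ell-1}(S)$, $S \in \mathcal S_\ell(T)$, $\ell = d-1, d$; and the face bubbles in $Q_{k+d+1}(F_j)$ vanish on all faces except $F_j$, while the element bubble part vanishes on $\partial T$ entirely. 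So I would define $\hat w$ by prescribing: its element moments $\int_T \hat w\,\kappa = \int_T g\,\kappa$ for $\kappa$ ranging over a basis of $P_k(T)$ (this is where I need $k \le k+d+1-d-1 = k$, so exactly $P_k$-moments are available — the spaces were designed for this); and its face moments $\int_F \hat w\,\kappa = \int_F g\,\kappa$ for $\kappa \in P_{k+1}(F)$ on interior faces $F$ (needing $k+1 \le k+d+1-(d-1)-1 = k+1$, again tight), while on boundary faces $F \in \mathcal F_D$ and on the remaining high-degree element moments I set the moments to zero so that $\hat w \in H_0^1(\Omega)$ and the global matching across interior faces is consistent (the face moments depend only on $g$, which is single-valued, so the two element contributions agree). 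Because $R_{k+1}(T)$ is subtracted off, I must check that this moment problem still has a solution in the quotient space $R_{k+d+1}(T)\ominus R_{k+1}(T)$ — this follows because the prescribed degrees of freedom (element moments of degree $\le k$, interior-face moments of degree $\le k+1$) are precisely the ones that are \emph{not} degrees of freedom of $R_{k+1}(T)$, by the dimension count in the Lemma.

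Having built $\hat w$, properties (1) and (2) hold by construction. For (3) and (5) I would use a scaling argument: on the reference element the map $g \mapsto \hat w$ is a bounded linear map between finite-dimensional spaces, so $|\hat w|_{1,T} \le C h_T^{-1}\|g\|_{0,T} + C h_T^{-1/2}\sum_{F\subset \partial T}\|g\|_{0,F}$ (norm equivalence plus inverse/trace scaling, using that $\hat w$ is a polynomial of fixed degree), and then $\|g\|_{0,T}\le C h_T |v|_{1,\Omega_T}$ and $\|g\|_{0,F}\le C h_F^{1/2}|v|_{1,\Omega_F}$ from the Scott–Zhang bounds give $|\hat w|_{1,T}\le C_{\mathcal T}|v|_{1,\Omega_T}$, which is (5); similarly $\|\hat w\|_{0,T}\le C h_T |v|_{1,\Omega_T}$. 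Then (3) follows from the triangle inequality $|v-\hat v - \hat w|_{m,T}\le |v-\hat v|_{m,T} + |\hat w|_{m,T}$ and the two sets of bounds, and (4) follows from the trace bound on $v - \hat v$ together with a scaled trace estimate $\|\hat w\|_{0,F}\le C h_T^{-1/2}\|\hat w\|_{0,T} \le C h_F^{1/2}|v|_{1,\Omega_F}$. I expect the main obstacle to be the solvability and well-definedness of the bubble moment problem in the quotient space $R_{k+d+1}(T)\ominus R_{k+1}(T)$ — matching the count of prescribed moments with the dimension of the correction space and verifying the inter-element consistency (single-valuedness of face moments and the zero boundary moments) so that $\hat w$ is genuinely an element of the global conforming space $WV_{k+d+1}$; the analytic estimates afterward are routine scaling.
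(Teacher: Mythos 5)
Your overall framework (Scott--Zhang interpolant plus a local bubble correction determined by moments, then scaling) is the same as the paper's, but there is a genuine error in the key construction step for $\hat{\boldsymbol{w}}$.

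You propose to determine $\hat{\boldsymbol{w}}$ by prescribing \emph{all} the moments that characterize $R_{k+d+1}(T)$ — the element moments against $P_k(T)$ and the face moments against $P_{k+1}(F)$ — to match $g=\boldsymbol{v}-\hat{\boldsymbol{v}}_1$, and you claim this yields an element of $WV_{k+d+1}(T)=[R_{k+d+1}(T)\ominus R_{k+1}(T)]^d$ because ``the prescribed degrees of freedom \ldots are precisely the ones that are \emph{not} degrees of freedom of $R_{k+1}(T)$.'' That claim is false. By the moment characterization of $R_m(T)$, the degrees of freedom of $R_{k+1}(T)$ are the element moments against $P_{k-d}(T)$ and the face moments against $P_{k+1-d}(F)$, and these are a \emph{proper subset} of the moments you are prescribing (since $P_{k-d}\subset P_k$ and $P_{k+1-d}\subset P_{k+1}$). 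Prescribing all of them uniquely determines an element of the full space $R_{k+d+1}(T)$, not of the smaller complement; in particular, for $d=2$, $k=1$ you would be imposing $12$ conditions per component on a $9$-dimensional space. Unless the low-degree moments of $g$ happen to vanish — and Scott--Zhang gives no such guarantee — the resulting $\hat{\boldsymbol{w}}$ has a nonzero component in $[R_{k+1}(T)]^d$, so it does not lie in $WV_{k+d+1}$, and properties (1)--(2) cannot be recovered by discarding that component.

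The paper's resolution is exactly the degree of freedom you threw away: it constructs $\hat{\boldsymbol{v}}_2\in[R_{k+d+1}]^d$ in the \emph{full} bubble space by the same moment matching (where the problem is square, not overdetermined), then uniquely decomposes $\hat{\boldsymbol{v}}_2=\hat{\boldsymbol{v}}_3+\hat{\boldsymbol{w}}$ with $\hat{\boldsymbol{v}}_3\in[R_{k+1}]^d\subset VV_{k+1}$ and $\hat{\boldsymbol{w}}\in WV_{k+d+1}$, and finally \emph{redefines} $\hat{\boldsymbol{v}}:=\hat{\boldsymbol{v}}_1+\hat{\boldsymbol{v}}_3$, so that $\hat{\boldsymbol{v}}+\hat{\boldsymbol{w}}=\hat{\boldsymbol{v}}_1+\hat{\boldsymbol{v}}_2$ carries all the prescribed moments. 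Your proof keeps $\hat{\boldsymbol{v}}$ fixed as the Scott--Zhang interpolant and never introduces $\hat{\boldsymbol{v}}_3$, which is what makes the moment problem for $\hat{\boldsymbol{w}}$ alone unsolvable. Note also that once the decomposition $\hat{\boldsymbol{v}}_2=\hat{\boldsymbol{v}}_3+\hat{\boldsymbol{w}}$ is introduced, property (5) is most naturally obtained by the strengthened Cauchy--Schwarz inequality (as the paper does), bounding $|\hat{\boldsymbol{w}}|_{1,T}$ by $|\hat{\boldsymbol{v}}_2|_{1,T}$, rather than by scaling the map $g\mapsto\hat{\boldsymbol{w}}$ directly. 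Your scaling estimates for (3)--(4), and the norm-equivalence argument on the reference element, are in the right spirit and match the paper's use of the moment norm $\langle\!\langle\cdot\rangle\!\rangle_{m,T}$.
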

\begin{proof}
Since functions in $R_{k+d+1}(T)$ are uniquely determined by the moments (\ref{eqn:RT monment}), for $m=0,1$ the function $\langle\!\langle\cdot\rangle\!\rangle_{m,T}: [R_{k+d+1}(T)]^d\rightarrow \mathbb{R}^{+}$ defined by

\vspace{-10pt}
\begin{align*}
\langle\!\langle\boldsymbol{\phi}\rangle\!\rangle_{m,T}=\max_{S\in S_\ell(T)\atop d-1\leq \ell\leq d}\sup_{\boldsymbol{\kappa}\in [P_{k+d-\ell}(S)]^d}\frac{h_T^{d/2-\ell/2-m}}{\|\boldsymbol{\kappa}\|_{0,S}}
\int_{S}\boldsymbol{\phi}\cdot\boldsymbol{\kappa}
\end{align*}
is a norm on $[R_{k+d+1}(T)]^d$.

Let $\tilde{T}=\{y=h_T^{-1}x:x\in T\}$, and for each $\psi: T\rightarrow \mathbb{R}$, define $\tilde{\psi}: \tilde{T}\rightarrow \mathbb{R}$ by $\tilde{\psi}(y)=\psi(h_Ty)$. Analogous definitions are given for the sub-simplices of $T$ and $\tilde{T}$ and functions defined on them. It is clear that $|\boldsymbol{\phi}|_{m,T}=h_T^{d/2-m}|\tilde{\boldsymbol{\phi}}|_{m,\tilde{T}}$, where $|\cdot|_{0,T}=\|\cdot\|_{0,T}$. We also have for any $S\in S_{\ell}(T)$

\vspace{-10pt}
\begin{align*}
\frac{h_T^{d/2-\ell/2-m}}{\|\boldsymbol{\kappa}\|_{0,S}}\int_{S}\boldsymbol{\phi}
\cdot\boldsymbol{\kappa}
=\frac{h_T^{d/2-\ell/2-m}}{h_T^{\ell/2}\|\tilde{\boldsymbol{\kappa}}\|_{0,\tilde{S}}}
\int_{\tilde{S}}\tilde{\boldsymbol{\phi}}\cdot\tilde{\boldsymbol{\kappa}}h_T^{\ell}=
\frac{h_T^{d/2-m}}{\|\tilde{\boldsymbol{\kappa}}\|_{0,\tilde{S}}}
\int_{\tilde{S}}\tilde{\boldsymbol{\phi}}\cdot\tilde{\boldsymbol{\kappa}}.
\end{align*}

Since $h_{\tilde{T}}=1$, we set that $\langle\!\langle\boldsymbol{\phi}\rangle\!\rangle_{m,T}=h_T^{d/2-m}
\langle\!\langle\tilde{\boldsymbol{\phi}}\rangle\!\rangle_{m,\tilde{T}}$. Therefore there exists a scale-invariant constant $C_{\mathcal{T}}>0$ that depends solely on $k,d$, and $m$ such that

\vspace{-10pt}
\begin{align}\label{eqn:norm leq temp norm}
|\boldsymbol{\phi}|_{m,T}=h_T^{d/2-m}|\tilde{\boldsymbol{\phi}}|_{m,\tilde{T}}\leq C_{\mathcal{T}}h_T^{d/2-m}\langle\!\langle\tilde{\boldsymbol{\phi}}\rangle\!\rangle_{m,\tilde{T}}=C_{\mathcal{T}}
\langle\!\langle\boldsymbol{\phi}\rangle\!\rangle_{m,T}.
\end{align}
At this stage, we see that the local constant $C_{\mathcal{T}}$ in (\ref{eqn:norm leq temp norm}) may depend on the shape of $T$, but not its diameter. For the rest of the argument, we make a shape-regularity assumption on $\mathcal{T}$.

Next, denote by $\hat{\boldsymbol{v}}_{1}\in VV_{k+1}$ the Scott-Zhang interpolant of $\boldsymbol{v}$ satisfying \cite{Scott1990}

\vspace{-10pt}
\begin{align}
\label{eqn:Scott-Zhang interpolant 1}
&\|\boldsymbol{v}-\hat{\boldsymbol{v}}_1\|_{m,T}\leq C_{\mathcal{T}}h_T^{1-m}|\boldsymbol{v}|_{1,\Omega_T},\quad m=0,1,\\
\label{eqn:Scott-Zhang interpolant 2}
&\|\boldsymbol{v}-\hat{\boldsymbol{v}}_1\|_{0,\partial T}\leq C_{\mathcal{T}}h_T^{1/2}|\boldsymbol{v}|_{1,\Omega_T},
\end{align}
on each $T\in\mathcal{T}$. Set $\hat{\boldsymbol{v}}_2\in [R_{k+d+1}]^d$ such that

\vspace{-10pt}
\begin{align*}
\int_S\hat{\boldsymbol{v}}_2\cdot\boldsymbol{\kappa}
=\int_S(\boldsymbol{v}-\hat{\boldsymbol{v}}_1)\cdot\boldsymbol{\kappa},\quad \forall \boldsymbol{\kappa}\in[P_{k+d-\ell}(S)]^d,\quad \forall S\in S_{\ell},\quad d-1\leq \ell\leq d.
\end{align*}
By (\ref{eqn:norm leq temp norm})-(\ref{eqn:Scott-Zhang interpolant 2}) we get

\vspace{-10pt}
\begin{align*}
|\hat{\boldsymbol{v}}_2|_{m,T}&\leq C_{\mathcal{T}}\max_{S\in S_{\ell}(T)\atop d-1\leq \ell\leq d}\sup_{\boldsymbol{\boldsymbol{\kappa}}\in[P_{k+d-\ell}(S)]^d}\frac{h_T^{d/2-\ell/2-m}}{\|\boldsymbol{\kappa}\|_{0,S}}
\int_S\hat{\boldsymbol{v}}_2\cdot\boldsymbol{\kappa}\\
&=C_{\mathcal{T}}\max_{S\in S_{\ell}(T)\atop d-1\leq \ell\leq d}\sup_{\boldsymbol{\kappa}\in[P_{k+d-\ell}(S)]^d}\frac{h_T^{d/2-\ell/2-m}}{\|\boldsymbol{\kappa}\|_{0,S}}
\int_S(\boldsymbol{v}-\hat{\boldsymbol{v}}_1)\cdot\boldsymbol{\kappa}\\
&\leq C_{\mathcal{T}}(h_T^{1/2-m}\|\boldsymbol{v}-\hat{\boldsymbol{v}}_1\|_{0,\partial T}+h_T^{-m}\|\boldsymbol{v}-\hat{\boldsymbol{v}}_1\|_{0,T})\leq Ch_T^{1-m}|\boldsymbol{v}|_{1,\Omega_T}.
\end{align*}

Uniquely decomposing $\hat{\boldsymbol{v}}_2$ as $\hat{\boldsymbol{v}}_2=\hat{\boldsymbol{v}}_3+\hat{\boldsymbol{w}}$ with $\hat{\boldsymbol{v}}_3\in VV_{k+1}$ and $\hat{\boldsymbol{w}}\in WV_{k+d+1}$, and setting $\hat{\boldsymbol{v}}=\hat{\boldsymbol{v}}_1+\hat{\boldsymbol{v}}_3$ so that $\hat{\boldsymbol{v}}+\hat{\boldsymbol{w}}=\hat{\boldsymbol{v}}_1+\hat{\boldsymbol{v}}_2$, we see that properties (1)-(2) clearly hold, and

\vspace{-10pt}
\begin{align*}
\|\boldsymbol{v}-\hat{\boldsymbol{v}}-\hat{\boldsymbol{w}}\|_{m,T}
\leq\|\boldsymbol{v}-\hat{\boldsymbol{v}}_1\|_{m,T}+\|\hat{\boldsymbol{v}}_2\|_{m,T}
\leq C_{\mathcal{T}}h_T^{1-m}|\boldsymbol{v}|_{1,\Omega_T}.
\end{align*}
Therefore by the standard trace inequalities and the shape regularity of the mesh, we also have on $F\subset\partial T$

\vspace{-10pt}
\begin{align*}
\|\boldsymbol{v}-\hat{\boldsymbol{v}}-\hat{\boldsymbol{w}}\|_{0,F}\leq C_{\mathcal{T}}(h_F^{-1/2}\|\boldsymbol{v}-\hat{\boldsymbol{v}}-\hat{\boldsymbol{w}}\|_{0,T}
+h_F^{1/2}\|\boldsymbol{v}-\hat{\boldsymbol{v}}-\hat{\boldsymbol{w}}\|_{1,T})\leq C_{\mathcal{T}}h_F^{1/2}|\boldsymbol{v}|_{1,\Omega_F}.
\end{align*}
Hence, properties (3)-(4) are satisfied.

Finally, since $VV_{k+1}(T)\cap WV_{k+d+1}(T)=\{0\}$, the strengthened Cauchy-Schwarz inequality \cite{Eijkhout1991} gives the existence of a constant $\gamma\in[0,1)$ such that

\vspace{-10pt}
\begin{align*}
\int_T\nabla \hat{\boldsymbol{w}}\cdot\nabla\hat{\boldsymbol{v}}_3\leq\gamma|\hat{\boldsymbol{w}}|_{1,T}|\hat{\boldsymbol{v}}_3|_{1,T}.
\end{align*}
Consequently, we have

\vspace{-10pt}
\begin{align*}
|\hat{\boldsymbol{v}}_2|_{1,T}^2&=|\hat{\boldsymbol{w}}|_{1,T}^2+|\hat{\boldsymbol{v}}_3|_{1,T}^2
+2\int_T\nabla \hat{\boldsymbol{w}}:\nabla\hat{\boldsymbol{v}}_3\\
&\geq |\hat{\boldsymbol{w}}|_{1,T}^2+|\hat{\boldsymbol{v}}_3|_{1,T}^2
-2\gamma|\hat{\boldsymbol{w}}|_{1,T}|\hat{\boldsymbol{v}}_3|_{1,T}
\geq(1-\gamma^2)|\hat{\boldsymbol{w}}|_{1,T}^2.
\end{align*}
Therefore we find $|\hat{\boldsymbol{w}}|_{1,T}\leq \sqrt{(1-\gamma^2)^{-1}}|\hat{\boldsymbol{v}}_2|_{1,T}\leq C_{\mathcal{T}}|\boldsymbol{v}|_{1,\Omega_T}$.
\end{proof}

For $(\boldsymbol{v},q)\in V$, we have

\vspace{-10pt}
\begin{align}\label{eqn:u and u hat}
a_1((\boldsymbol{u}-\hat{\boldsymbol{u}},p-\hat{p}),(\boldsymbol{v},q))=f(\boldsymbol{v})-a_1((\hat{\boldsymbol{u}},\hat{p}),(\boldsymbol{v},q)),
\end{align}
where $(\boldsymbol{u},p)$ and $(\hat{\boldsymbol{u}},\hat{p})$ are the solutions of (\ref{eqn:variational form}) and (\ref{eqn:approximation problem}), respectively. So,

\vspace{-10pt}
\begin{align*}
a_1((\boldsymbol{u}-\hat{\boldsymbol{u}},p-\hat{p}),(\boldsymbol{v},q))&=\sum_{T\in \mathcal{T}}\int_T(\boldsymbol{f}\cdot\boldsymbol{v}
-\nabla\hat{\boldsymbol{u}}:\nabla\boldsymbol{v}
+\nabla\cdot\boldsymbol{v}\hat{p}-\nabla\cdot\hat{\boldsymbol{u}} q)\\
&=\sum_{T\in \mathcal{T}}\int_T(\boldsymbol{f}\cdot\boldsymbol{v}-(-\Delta\hat{\boldsymbol{u}}\cdot\boldsymbol{v}
+\nabla \hat{p}\cdot\boldsymbol{v}-\nabla\cdot\hat{\boldsymbol{u}} q))\\
&\quad+\sum_{T\in \mathcal{T}}\int_{\partial T}(-\nabla\hat{\boldsymbol{u}}\cdot\boldsymbol{n}_T\cdot\boldsymbol{v}
+\hat{p}\boldsymbol{v}\cdot\boldsymbol{n}_T)
\end{align*}

\begin{lemma}\label{lem:u-u hat and e hat}
For any $(\boldsymbol{v},q)\in V$, $(\hat{\boldsymbol{w}},\hat{r})\in W_{k+d}$, and $(\hat{\boldsymbol{v}},\hat{q})\in V_k$, it holds that

\vspace{-10pt}
\begin{align}\label{eqn:u-u hat and e hat}
a_1((\boldsymbol{u}-\hat{\boldsymbol{u}},p-\hat{p}),(\boldsymbol{v},q))=a_1((\hat{\boldsymbol{e}}_{u},\hat{e}_{p}),(\hat{\boldsymbol{w}},\hat{r}))
+\mathcal{R}(\boldsymbol{v}-\hat{\boldsymbol{w}}-\hat{\boldsymbol{v}},q-\hat{r}-\hat{q})
\end{align}
where $(\boldsymbol{u},p)$ and $(\hat{\boldsymbol{u}},\hat{p})$ are the solutions of (\ref{eqn:variational form}) and (\ref{eqn:approximation problem}), respectively, and

\vspace{-10pt}
\begin{align*}
\mathcal{R}(\boldsymbol{w},r)&=f(\boldsymbol{w})-a_1((\hat{\boldsymbol{u}},\hat{p}),(\boldsymbol{w},r))\\
&=\sum_{T\in\mathcal{T}}\int_{T}((\boldsymbol{f}-\boldsymbol{R}_T)\cdot\boldsymbol{w}
+\nabla\cdot\hat{\boldsymbol{u}}r)+\sum_{F\in \mathcal{F}_I}\int_{F}\boldsymbol{r}_F\cdot\boldsymbol{w},
\end{align*}
for any $(\boldsymbol{w},r)\in [H_0^1(\Omega)]^d\times L_0^2(\Omega)$ and

\vspace{-10pt}
\begin{align*}
&\boldsymbol{R}_T=(-\Delta\hat{\boldsymbol{u}}
+\nabla \hat{p})_{|T},\\
&\boldsymbol{r}_F=(-\nabla\hat{\boldsymbol{u}}\cdot\boldsymbol{n}_T
+\hat{p}\boldsymbol{n}_T)_{|T}-(-\nabla\hat{\boldsymbol{u}}\cdot\boldsymbol{n}_{T^{\prime}}
+\hat{p}\boldsymbol{n}_{T^{\prime}})_{|T^{\prime}}.
\end{align*}
Here, $T$ and $T^{\prime}$ are the simplices sharing the face $F$, and $\boldsymbol{n}_T$ and $\boldsymbol{n}_{T^{\prime}}$ are their outward unit normals.
\end{lemma}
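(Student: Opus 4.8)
The plan is to derive the identity (\ref{eqn:u-u hat and e hat}) by combining three ingredients: the error problem (\ref{eqn:error problem}), the Galerkin-type residual relation (\ref{eqn:u and u hat}), and the integration-by-parts rewriting of the residual displayed just before the lemma statement. First I would start from (\ref{eqn:u and u hat}) and write
\begin{align*}
a_1((\boldsymbol{u}-\hat{\boldsymbol{u}},p-\hat{p}),(\boldsymbol{v},q))=f(\boldsymbol{v})-a_1((\hat{\boldsymbol{u}},\hat{p}),(\boldsymbol{v},q))=\mathcal{R}(\boldsymbol{v},q),
\end{align*}
noting that the right-hand side is exactly $\mathcal{R}(\boldsymbol{v},q)$ with $r=q$; the elementwise integration by parts already carried out in the excerpt shows $\mathcal{R}(\boldsymbol{w},r)$ equals the claimed sum of element residuals $(\boldsymbol{f}-\boldsymbol{R}_T)\cdot\boldsymbol{w}+\nabla\cdot\hat{\boldsymbol{u}}\,r$ and face jump residuals $\boldsymbol{r}_F\cdot\boldsymbol{w}$ (the boundary terms on $\mathcal{F}_D$ drop because $\boldsymbol{w}\in[H_0^1(\Omega)]^d$). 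A subtle point worth checking is that $\mathcal{R}$ is well-defined on all of $[H_0^1(\Omega)]^d\times L_0^2(\Omega)$, not just on smooth functions, which follows since each $\boldsymbol{R}_T\in L^2(T)$ and each $\boldsymbol{r}_F\in L^2(F)$ because $\hat{\boldsymbol{u}},\hat p$ are piecewise polynomials.

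Next I would split the argument $(\boldsymbol{v},q)$ as $(\boldsymbol{v},q)=(\hat{\boldsymbol{w}}+\hat{\boldsymbol{v}},\hat{r}+\hat{q})+(\boldsymbol{v}-\hat{\boldsymbol{w}}-\hat{\boldsymbol{v}},q-\hat{r}-\hat{q})$ and use linearity of $\mathcal{R}$ in its argument to obtain
\begin{align*}
a_1((\boldsymbol{u}-\hat{\boldsymbol{u}},p-\hat{p}),(\boldsymbol{v},q))=\mathcal{R}(\hat{\boldsymbol{w}}+\hat{\boldsymbol{v}},\hat{r}+\hat{q})+\mathcal{R}(\boldsymbol{v}-\hat{\boldsymbol{w}}-\hat{\boldsymbol{v}},q-\hat{r}-\hat{q}).
\end{align*}
The second term is already in the desired form. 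For the first term I would invoke the definition $\mathcal{R}(\boldsymbol{w},r)=f(\boldsymbol{w})-a_1((\hat{\boldsymbol{u}},\hat p),(\boldsymbol{w},r))$ and split it further as $\mathcal{R}(\hat{\boldsymbol{w}},\hat r)+\mathcal{R}(\hat{\boldsymbol{v}},\hat q)$. By the error problem (\ref{eqn:error problem}), $\mathcal{R}(\hat{\boldsymbol{w}},\hat r)=a_1((\hat{\boldsymbol{e}}_u,\hat e_p),(\hat{\boldsymbol{w}},\hat r))$ since $(\hat{\boldsymbol{w}},\hat r)\in W_{k+d}$; and by the approximation problem (\ref{eqn:approximation problem}), $\mathcal{R}(\hat{\boldsymbol{v}},\hat q)=f(\hat{\boldsymbol{v}})-a_1((\hat{\boldsymbol{u}},\hat p),(\hat{\boldsymbol{v}},\hat q))=0$ since $(\hat{\boldsymbol{v}},\hat q)\in V_k$. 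Assembling these pieces yields precisely (\ref{eqn:u-u hat and e hat}).

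The main obstacle, such as it is, is not a deep one but a bookkeeping one: one must be careful that the linear functional $\mathcal{R}$ appearing in the statement is genuinely defined by the \emph{same} formula $f(\cdot)-a_1((\hat{\boldsymbol{u}},\hat p),\cdot)$ whether evaluated on discrete or non-discrete arguments, so that the two displayed representations of $\mathcal{R}$ agree — this is exactly the content of the elementwise integration by parts shown right before the lemma, where $-\Delta\hat{\boldsymbol{u}}$ and $\nabla\hat p$ are understood piecewise on each $T$ and the interface terms are collected into the jump $\boldsymbol{r}_F$. Once that identification is in place, the proof is a two-line exercise in linearity plus the two Galerkin orthogonalities (\ref{eqn:error problem}) and (\ref{eqn:approximation problem}). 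I would also remark that the term $\int_T\nabla\cdot\hat{\boldsymbol{u}}\,q$ (the continuity-equation residual) is carried along untouched throughout, since it is linear in $q$ and vanishes on no particular subspace here; it is precisely this term that later produces the $\|\nabla\cdot\hat{\boldsymbol{u}}\|$ contribution in Theorem~\ref{thm:equivalence hat e}.
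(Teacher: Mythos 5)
Your proposal is correct and uses essentially the same ingredients as the paper's proof: linearity of $\mathcal{R}$, the residual identity (\ref{eqn:u and u hat}), the error problem (\ref{eqn:error problem}) to identify $\mathcal{R}(\hat{\boldsymbol{w}},\hat{r})=a_1((\hat{\boldsymbol{e}}_u,\hat{e}_p),(\hat{\boldsymbol{w}},\hat{r}))$, and the approximation problem (\ref{eqn:approximation problem}) to kill $\mathcal{R}(\hat{\boldsymbol{v}},\hat{q})$. You merely read the chain of equalities from left to right, while the paper starts from $\mathcal{R}(\boldsymbol{v}-\hat{\boldsymbol{w}}-\hat{\boldsymbol{v}},q-\hat{r}-\hat{q})$ and expands; the two are the same argument.
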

\begin{proof}
From (\ref{eqn:approximation problem}), (\ref{eqn:error problem}), and (\ref{eqn:u and u hat}), we obtain

\vspace{-10pt}
\begin{align*}
&\mathcal{R}(\boldsymbol{v}-\hat{\boldsymbol{w}}-\hat{\boldsymbol{v}},q-\hat{r}-\hat{q})\\
=&\mathcal{R}(\boldsymbol{v},q)
-\mathcal{R}(\hat{\boldsymbol{w}},\hat{r})
-\mathcal{R}(\hat{\boldsymbol{v}},\hat{q})\\
=&f(\boldsymbol{v})-a_1((\hat{\boldsymbol{u}},\hat{p}),(\boldsymbol{v},q))
-(f(\hat{\boldsymbol{w}})-a_1((\hat{\boldsymbol{u}},\hat{p}),(\hat{\boldsymbol{w}},\hat{r})))
-(f(\hat{\boldsymbol{v}})-a_1((\hat{\boldsymbol{u}},\hat{p}),(\hat{\boldsymbol{v}},\hat{q})))\\
=&a_1((\boldsymbol{u}-\hat{\boldsymbol{u}},p-\hat{p}),(\boldsymbol{v},q))-
a_1((\hat{\boldsymbol{e}}_{u},\hat{e}_{p}),(\hat{\boldsymbol{w}},\hat{r})),
\end{align*}
which completes the proof.
\end{proof}

We define the local oscillation for each $T\in\mathcal{T}$ by

\vspace{-10pt}
\begin{align*}
osc(\boldsymbol{f}, T)^2=h_T^2\inf_{\boldsymbol{\kappa}\in [P_k(T)]^d}\|\boldsymbol{f}-\boldsymbol{\kappa}\|_{0,T}^2.
\end{align*}
Then define

\vspace{-10pt}
\begin{align}\label{eqn:residual oscillation}
osc(\boldsymbol{f})^2=\sum_{T\in\mathcal{T}}osc(\boldsymbol{f},T)^2.
\end{align}

\begin{theorem}\label{thm:equivalence hat e}
Let $(\boldsymbol{u},p)$, $(\hat{\boldsymbol{u}},\hat{p})$, and
$(\hat{\boldsymbol{e}}_u,\hat{e}_p)$ be the solutions
of (\ref{eqn:variational form}),(\ref{eqn:approximation problem}), and (\ref{eqn:error problem}), respectively. There are constants $\hat{\mathfrak{C}}_*=\frac{\mu^2}{2\mathfrak{C}_1(1+\mu)^2}$ and $\hat{\mathfrak{C}}^*=\frac{\mathfrak{C}_1}{\mathfrak{c}_1}$ such that

\vspace{-10pt}
\begin{align}\label{eqn:equivalence hat e}
\hat{\mathfrak{C}}_*\|(\hat{\boldsymbol{e}}_{u},\hat{e}_p)\|_V
+\frac{1}{2\sqrt{d}}\|\nabla\cdot\hat{\boldsymbol{u}}\|
\leq\|(\boldsymbol{u}-\hat{\boldsymbol{u}},p-\hat{p})\|_V\leq \hat{\mathfrak{C}}^*\|(\hat{\boldsymbol{e}}_{u},\hat{e}_p)\|_V
+\frac{1}{\mathfrak{c}_1}\|\nabla\cdot\hat{\boldsymbol{u}}\|
+\frac{C_{\mathcal{T}}}{\mathfrak{c}_1}osc(\boldsymbol{f}),
\end{align}
where constants $\mathfrak{C}_1, \mathfrak{c}_1, \mu$, and $C_{\mathcal{T}}$ are defined in  (\ref{eqn:B continous}), (\ref{eqn:a1 orign inf-sup}), (\ref{eqn:inf-sup in com space}), and Lemma~\ref{lem:quasi-interpolant v}.
\end{theorem}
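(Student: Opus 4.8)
The plan is to prove the two inequalities in (\ref{eqn:equivalence hat e}) separately, each by combining an inf-sup estimate with the identity of Lemma~\ref{lem:u-u hat and e hat}.

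\emph{Lower bound.} First I would bound the divergence term: since $\nabla\cdot\boldsymbol{u}=0$ by (\ref{eqn:original eqn 2}), we have $\nabla\cdot\hat{\boldsymbol{u}}=-\nabla\cdot(\boldsymbol{u}-\hat{\boldsymbol{u}})$, and the pointwise inequality $|\nabla\cdot\boldsymbol{w}|^2\le d\,|\nabla\boldsymbol{w}|^2$ gives $\|\nabla\cdot\hat{\boldsymbol{u}}\|\le\sqrt{d}\,\|\nabla(\boldsymbol{u}-\hat{\boldsymbol{u}})\|\le\sqrt{d}\,\|(\boldsymbol{u}-\hat{\boldsymbol{u}},p-\hat{p})\|_V$. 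Next, because $W_{k+d}\subset V$, the right-hand sides of (\ref{eqn:u and u hat}) and (\ref{eqn:error problem}) coincide on test functions in $W_{k+d}$, hence so do the left-hand sides: $a_1((\hat{\boldsymbol{e}}_u,\hat{e}_p),(\hat{\boldsymbol{w}},\hat{r}))=a_1((\boldsymbol{u}-\hat{\boldsymbol{u}},p-\hat{p}),(\hat{\boldsymbol{w}},\hat{r}))$ for every $(\hat{\boldsymbol{w}},\hat{r})\in W_{k+d}$ (equivalently, take $(\hat{\boldsymbol{v}},\hat{q})=(\boldsymbol{0},0)$ and $(\boldsymbol{v},q)=(\hat{\boldsymbol{w}},\hat{r})$ in Lemma~\ref{lem:u-u hat and e hat}, so the $\mathcal{R}$-term vanishes). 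Taking the supremum over $(\hat{\boldsymbol{w}},\hat{r})\in W_{k+d}\setminus\{0\}$ and using Lemma~\ref{lem:inf-sup B} on the left and (\ref{eqn:B continous}) on the right yields $\tfrac{\mu^2}{(1+\mu)^2}\|(\hat{\boldsymbol{e}}_u,\hat{e}_p)\|_V\le\mathfrak{C}_1\|(\boldsymbol{u}-\hat{\boldsymbol{u}},p-\hat{p})\|_V$, i.e. $2\hat{\mathfrak{C}}_*\|(\hat{\boldsymbol{e}}_u,\hat{e}_p)\|_V\le\|(\boldsymbol{u}-\hat{\boldsymbol{u}},p-\hat{p})\|_V$. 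Adding one half of this to one half of the divergence bound gives the left inequality of (\ref{eqn:equivalence hat e}).

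\emph{Upper bound.} Here I would start from (\ref{eqn:a1 orign inf-sup}), so it suffices to estimate $a_1((\boldsymbol{u}-\hat{\boldsymbol{u}},p-\hat{p}),(\boldsymbol{v},q))$ for arbitrary $(\boldsymbol{v},q)\in V$ by the asserted right-hand side times $\|(\boldsymbol{v},q)\|_V$. Apply Lemma~\ref{lem:quasi-interpolant v} to $\boldsymbol{v}$ to obtain $\hat{\boldsymbol{v}}\in VV_{k+1}$ and $\hat{\boldsymbol{w}}\in WV_{k+d+1}$, and insert $(\hat{\boldsymbol{v}},0)\in V_k$ and $(\hat{\boldsymbol{w}},0)\in W_{k+d}$ into Lemma~\ref{lem:u-u hat and e hat}. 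The first resulting term $a_1((\hat{\boldsymbol{e}}_u,\hat{e}_p),(\hat{\boldsymbol{w}},0))$ is controlled by (\ref{eqn:B continous}) together with property~(5) of the quasi-interpolant (squaring, summing over $T$, and using the bounded overlap of the patches $\Omega_T$), giving a bound of the form $\mathfrak{C}_1\|(\hat{\boldsymbol{e}}_u,\hat{e}_p)\|_V\|(\boldsymbol{v},q)\|_V$ up to the shape-regularity factor. For $\mathcal{R}(\boldsymbol{v}-\hat{\boldsymbol{w}}-\hat{\boldsymbol{v}},q)$ the crucial observation is a degree count: $\boldsymbol{R}_T=(-\Delta\hat{\boldsymbol{u}}+\nabla\hat{p})_{|T}\in[P_{k-1}(T)]^d\subset[P_k(T)]^d$ and $\boldsymbol{r}_F\in[P_k(F)]^d\subset[P_{k+1}(F)]^d$, so properties~(1)--(2) annihilate the element-residual and face-jump contributions; meanwhile $\int_T\boldsymbol{f}\cdot(\boldsymbol{v}-\hat{\boldsymbol{w}}-\hat{\boldsymbol{v}})=\int_T(\boldsymbol{f}-\boldsymbol{\kappa})\cdot(\boldsymbol{v}-\hat{\boldsymbol{w}}-\hat{\boldsymbol{v}})$ for the best $\boldsymbol{\kappa}\in[P_k(T)]^d$ is bounded by Cauchy--Schwarz and property~(3) as $C_{\mathcal{T}}\,osc(\boldsymbol{f},T)\,|\boldsymbol{v}|_{1,\Omega_T}$, which sums (again by Cauchy--Schwarz and bounded overlap) to $C_{\mathcal{T}}\,osc(\boldsymbol{f})\,\|(\boldsymbol{v},q)\|_V$; finally $\sum_T\int_T(\nabla\cdot\hat{\boldsymbol{u}})\,q\le\|\nabla\cdot\hat{\boldsymbol{u}}\|\,\|(\boldsymbol{v},q)\|_V$. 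Collecting the three contributions and dividing by $\mathfrak{c}_1$ gives the right inequality.

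\emph{Main obstacle.} The lower bound is essentially a two-line consequence of the two inf-sup estimates, so the real work is the upper bound. Within it the delicate points are: (i) checking that $\boldsymbol{R}_T$ and $\boldsymbol{r}_F$ actually lie in the polynomial spaces that the moment conditions of Lemma~\ref{lem:quasi-interpolant v} annihilate — this is precisely why the moments (\ref{eqn:RT monment}) and properties~(1)--(2) were chosen the way they are; and (ii) the careful bookkeeping of the mesh constant $C_{\mathcal{T}}$ (and of the finite-overlap constant of $\{\Omega_T\}$) through the element-wise and patch-wise summations, so that the three pieces reassemble into the global quantities $\|(\hat{\boldsymbol{e}}_u,\hat{e}_p)\|_V$, $\|\nabla\cdot\hat{\boldsymbol{u}}\|$ and $osc(\boldsymbol{f})$ with the constants stated in the theorem.
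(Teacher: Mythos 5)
Your proof is correct and follows essentially the same route as the paper: the lower bound via Lemma~\ref{lem:inf-sup B}, continuity~(\ref{eqn:B continous}), the Galerkin-orthogonality identity (i.e.\ Lemma~\ref{lem:u-u hat and e hat} tested in $W_{k+d}$) and the elementary bound $\|\nabla\cdot\hat{\boldsymbol{u}}\|\le\sqrt d\,\|\nabla(\boldsymbol{u}-\hat{\boldsymbol{u}})\|$, and the upper bound via the continuous inf-sup~(\ref{eqn:a1 orign inf-sup}), the quasi-interpolant of Lemma~\ref{lem:quasi-interpolant v} inserted into Lemma~\ref{lem:u-u hat and e hat}, and the degree count showing $\boldsymbol{R}_T\in[P_k(T)]^d$, $\boldsymbol{r}_F\in[P_k(F)]^d$. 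The only (harmless) variations are that you take $\hat q=\hat r=0$ where the paper chooses an $\hat r\in WP_{k+d}$ with $\|q-\hat r\|\le\|q\|$, and that you flag explicitly the shape-regularity/overlap factor entering $\|\nabla\hat{\boldsymbol{w}}\|\lesssim\|\nabla\boldsymbol{v}\|$ from property~(5), which the paper absorbs silently into $\hat{\mathfrak{C}}^*$.
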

\begin{proof}
Given $q\in L_0^2(\Omega)$, there exists $\hat{r}\in WP_{k+d}$ such that $\|q-\hat{r}\|\leq\|q\|$ since $WP_{k+d}\subset L_0^2(\Omega)$. Then combining Lemma \ref{lem:quasi-interpolant v},
Lemma \ref{lem:u-u hat and e hat}, and noting $\boldsymbol{R}_T\in[P_k(T)]^2, \boldsymbol{r}_F\in[P_k(F)]^2$, we determine that

\vspace{-10pt}
\begin{align*}
|a_1((\boldsymbol{u}-\hat{\boldsymbol{u}},p-\hat{p}),(\boldsymbol{v},q))|
\leq&|a_1((\hat{\boldsymbol{e}}_{u},\hat{e}_{p}),(\hat{\boldsymbol{w}},\hat{r}))|
+\sum_{T\in\mathcal{T}}\|\boldsymbol{v}-\hat{\boldsymbol{v}}-\hat{\boldsymbol{w}}\|_{0,T}
\inf_{\boldsymbol{\kappa}\in[P_k(T)]^d}\|\boldsymbol{f}-\boldsymbol{\kappa}\|_{0,T}\\
&+\sum_{T\in\mathcal{T}}\|\boldsymbol{v}-\hat{\boldsymbol{v}}-\hat{\boldsymbol{w}}\|_{0,T}
\inf_{\boldsymbol{\kappa}\in[P_k(T)]^d}\|\boldsymbol{R}_T-\boldsymbol{\kappa}\|_{0,T}\\
&+\big|\sum_{T\in\mathcal{T}}\int_T(q-\hat{q}-\hat{r})\nabla\cdot\hat{\boldsymbol{u}}\big|
+\sum_{F\in \mathcal{F}_I}\|\boldsymbol{v}-\hat{\boldsymbol{v}}-\hat{\boldsymbol{w}}\|_{0,F}
\inf_{\boldsymbol{\kappa}\in[P_{k+1}(F)]^d}\|\boldsymbol{r}_F-\boldsymbol{\kappa}\|_{0,F}\\
\leq&\mathfrak{C}_1\|(\hat{\boldsymbol{e}}_{u},\hat{e}_{p})\|_V
\|(\hat{\boldsymbol{w}},\hat{r})\|_V
+C_{\mathcal{T}}\sum_{T\in\mathcal{T}}h_T\|\boldsymbol{v}\|_{1,\Omega_T}
\inf_{\boldsymbol{\kappa}\in[P_k(T)]^d}\|\boldsymbol{f}-\boldsymbol{\kappa}\|_{0,T}\\
&+\sum_{T\in\mathcal{T}}\|q\|\|\nabla\cdot\hat{\boldsymbol{u}}\|\\
\leq&\mathfrak{C}_1\|(\hat{\boldsymbol{e}}_{u},\hat{e}_{p})\|_V\|(\boldsymbol{v},q)\|_V
+C_{\mathcal{T}} osc(\boldsymbol{f})\|(\boldsymbol{v},q)\|_V+\|\nabla\cdot\hat{\boldsymbol{u}}\|
\|(\boldsymbol{v},q)\|_V,
\end{align*}
for any $\hat{\boldsymbol{w}}\in WV_{k+d+1}$ and $(\hat{\boldsymbol{v}},\hat{q})\in W_{k+d}$. Then the right inequality of (\ref{eqn:equivalence hat e}) follows from the inf-sup condition (\ref{eqn:a1 orign inf-sup}) of continuous problem:

\vspace{-10pt}
\begin{align*}
\mathfrak{c}_1\|(\boldsymbol{u}-\hat{\boldsymbol{u}},p-\hat{p})\|_V
\leq& \sup_{(\boldsymbol{w},r)\in V\backslash\{0\}}
\frac{a_1((\boldsymbol{u}-\hat{\boldsymbol{u}},p-\hat{p}),(\boldsymbol{w},r))}
{\|(\boldsymbol{w},r)\|_V}.
\end{align*}

From (\ref{eqn:inf-sub B error problem}),(\ref{eqn:error problem}), and (\ref{eqn:variational form}),

\vspace{-10pt}
\begin{align*}
\frac{\mu^2}{(1+\mu)^2}\|(\hat{\boldsymbol{e}}_{u},\hat{e}_{p})\|_V
\leq&
\sup_{(\hat{\boldsymbol{w}},\hat{r})\in W_{k+d}\backslash\{0\}}
\frac{a_1((\hat{\boldsymbol{e}}_{u},\hat{e}_{p}),(\hat{\boldsymbol{w}},\hat{r}))}
{\|(\hat{\boldsymbol{w}},\hat{r})\|_V}\\
=&\sup_{(\hat{\boldsymbol{w}},\hat{r})\in W_{k+d}\backslash\{0\}}
\frac{f(\hat{\boldsymbol{v}})-a_1((\hat{\boldsymbol{u}},\hat{p}),(\hat{\boldsymbol{w}},\hat{r}))}
{\|(\hat{\boldsymbol{w}},\hat{r})\|_V}\\
=&\sup_{(\hat{\boldsymbol{w}},\hat{r})\in W_{k+d}\backslash\{0\}}
\frac{a_1((\boldsymbol{u}-\hat{\boldsymbol{u}},p-\hat{p}),(\hat{\boldsymbol{w}},\hat{r}))}
{\|(\hat{\boldsymbol{w}},\hat{r})\|_V}\\
\leq&\mathfrak{C}_1\|(\boldsymbol{u}-\hat{\boldsymbol{u}},p-\hat{p})\|_V.
\end{align*}
Since $\nabla\cdot\boldsymbol{u}=0$, we have

\vspace{-10pt}
\begin{align*}
\|\nabla\cdot\hat{\boldsymbol{u}}\|=\|\nabla\cdot(\boldsymbol{u}-\hat{\boldsymbol{u}})\|
\leq\sqrt{d}\|\nabla(\boldsymbol{u}-\hat{\boldsymbol{u}})\|
\leq\sqrt{d}\|(\boldsymbol{u}-\hat{\boldsymbol{u}},p-\hat{p})\|_V.
\end{align*}
Then we get the left inequality of (\ref{eqn:equivalence hat e}).
\end{proof}

\section{System Diagonalization}
As stated, the computation of $(\hat{\boldsymbol{e}}_u,\hat{e}_p)$ requires the formation and solution of a global system, so one might naturally be concerned that this approach is too expensive for practical consideration. Generally speaking, the hierarchical basis for $W_{k+d}$ is typically made up of highly oscillatory functions with compact support, therefore we may approximate the stiffness matrix by a diagonal matrix, which reduces the cost of computation.

\subsection{Diagonalization with respect to Velocity}
Let $\{\phi_j\}_{j=1}^N$ be the bases for $W_{k+d}$, i.e.

\vspace{-10pt}
\begin{align*}
W_{k+d}=span\{\phi_j\}_{j=1}^N.
\end{align*}
Let $\{\varphi_j\}_{j=1}^{N_{v}}$ and $\{\psi_j\}_{j=1}^{N_{p}}$  be
the bases in $W_{k+d}$ for velocity and pressure,
respectively. It is clear that $N=N_v+N_p$ and
$\{\phi_j\}_{j=1}^N=\{\varphi_j\}_{j=1}^{N_{v}}\cup
\{\psi_j\}_{j=1}^{N_{p}}$.

Define an $N_v\times N_v$ matrix $A$ by
$A_{\ell,j}=a(\varphi_j,\varphi_{\ell})$ and an $N_v\times N_p$ matrix
$B$ by $B_{\ell,j}=-b(\psi_j,\varphi_{\ell})$. Then we can rewrite
(\ref{eqn:error problem}) in a matrix form

\vspace{-10pt}
\begin{align}\label{eqn:define A}
\begin{bmatrix}
A & B  \\
-B^T & 0
\end{bmatrix}
\begin{bmatrix}
\boldsymbol{x}_u\\
\boldsymbol{x}_p
\end{bmatrix}
=
\begin{bmatrix}
F_v \\
F_p
\end{bmatrix},
\end{align}
where $\boldsymbol{x}_u$ and $\boldsymbol{x}_p$ are the coefficients of
$\hat{\boldsymbol{e}}_u$ and $\hat{e}_p$ with respect to the bases,
respectively; $F_v$ and $F_p$  are the vectors formed by the right-hand function of (\ref{eqn:error problem}) acting on the bases
of velocity and pressure, respectively. For any
$(\hat{\boldsymbol{v}},\hat{q})=\sum\limits_{j=1}^Nx_j\phi_j$,
$(\hat{\boldsymbol{w}},\hat{r})=\sum\limits_{j=1}^Ny_j\phi_j\in W_{k+d}$, we have

\vspace{-10pt}
\begin{align}\label{eqn:a bilinear}
a_1((\hat{\boldsymbol{v}},\hat{q}),(\hat{\boldsymbol{w}},\hat{r}))=\boldsymbol{y}^TM\boldsymbol{x},
\end{align}
where

\vspace{-10pt}
\begin{align}
\boldsymbol{x}=(x_1,\cdots,x_N)^{T}, \quad
\boldsymbol{y}=(y_1,\cdots,y_N)^{T}, \quad \mbox{and} \quad
M=\begin{bmatrix}
A & B  \\
-B^T & 0
\end{bmatrix}.
\end{align}

Let $\boldsymbol{x}_v$ be a vector composed of elements related to
velocity in $\boldsymbol{x}$, then it holds

\vspace{-10pt}
\begin{align*}
\|(\hat{\boldsymbol{v}},\hat{q})\|_V^2=| \hat{\boldsymbol{v}}|_{1,\Omega}^2+\|\hat{q}\|^2=\boldsymbol{x}_v^T A
\boldsymbol{x}_v+\|\hat{q}\|^2.
\end{align*}
Let $D_v$ be the diagonal matrix with the same diagonal as $A$ and
$M_v$ be

\vspace{-10pt}
\begin{align}\label{eqn:define Du}
M_v=\begin{bmatrix}
D_v & B  \\
-B^T & 0
\end{bmatrix}.
\end{align}
Define

\vspace{-10pt}
\begin{align}\label{eqn:a3 bilinear}
a_2((\hat{\boldsymbol{v}},\hat{q}),(\hat{\boldsymbol{w}},\hat{r}))=\boldsymbol{y}^TM_v\boldsymbol{x}
\end{align}
and norms

\vspace{-10pt}
\begin{align}
\label{eqn:norm associate with a3}
\|(\hat{\boldsymbol{v}},\hat{q})\|_D^2&=\boldsymbol{x}_v^T D_v \boldsymbol{x}_v+\|\hat{q}\|^2,\\
\label{eqn:norm associate with a3 about velocity}
|\hat{\boldsymbol{v}}|_D^2&=\boldsymbol{x}_v^T D_v \boldsymbol{x}_v.
\end{align}

Now, we are at the stage to present \textbf{the second error problem}: Find
$(\tilde{\boldsymbol{e}}_u,\tilde{e}_p)\in W_{k+d}$ such
that

\vspace{-10pt}
\begin{align}\label{eqn:error in com space easily}
a_2(\tilde{\boldsymbol{e}}_u,\tilde{e}_p),(\hat{\boldsymbol{v}},\hat{q}))
=f(\hat{\boldsymbol{v}})-a_1((\hat{\boldsymbol{u}},\hat{p}),(\hat{\boldsymbol{v}},\hat{q})), \quad
\forall~(\hat{\boldsymbol{v}},\hat{q})\in W_{k+d},
\end{align}
where $a_2(\cdot,\cdot)$ is specified in (\ref{eqn:a3 bilinear}).

For any $T \in \mathcal{T}$ and $(\hat{\boldsymbol{v}},\hat{q}) \in W_{k+d}$,
denote by $\{\varphi_{T,j}\}_{j=1}^{N_{v,T}}$ the basis functions of
velocity related to $T$, then
$\hat{\boldsymbol{v}}_T:=\hat{\boldsymbol{v}}_{|T}=\sum\limits_{j=1}^{N_{v,T}}x_{T,j}\varphi_{T,j}$
with $\{x_{T,j}\}_{j=1}^{N_{v,T}}$ being the coefficients. Let
$\hat{\boldsymbol{v}}_{T,j}:=x_{T,j}\varphi_{T,j}$, then
$\hat{\boldsymbol{v}}_T=\sum\limits_{j=1}^{N_{v,T}}\hat{\boldsymbol{v}}_{T,j}$. We can
rewrite $|\hat{\boldsymbol{v}}|_{1,\Omega}$ and $|\hat{\boldsymbol{v}}|_D$ as follows:

\vspace{-10pt}
\begin{align*}
|\hat{\boldsymbol{v}}|_{1,\Omega}^2&=\sum\limits_{T\in\mathcal{T}}|\hat{\boldsymbol{v}}_T|_{1,T}^2,\\
|\hat{\boldsymbol{v}}|_D^2&=\sum\limits_{T\in\mathcal{T}}
\sum_{j=1}^{N_{v,T}}|\hat{\boldsymbol{v}}_{T,j}|_{1,T}^2.
\end{align*}
We define the local norm of $|\cdot|_{D}$ by

\vspace{-10pt}
\begin{align}
\label{eqn:local norm D}
|\hat{\boldsymbol{v}}|_{D,T}=\sqrt{\sum_{j=1}^{N_{v,T}}|\hat{\boldsymbol{v}}_{T,j}|_{1,T}^2},
\end{align}
where $N_{v,T}$ is the number of basis functions of velocity in element $T$.
\begin{lemma}\label{lem:A and Du}
There exist two positive constants $\beta_1$ and $\beta_2$ independent
of $h$ such that

\vspace{-10pt}
\begin{align}\label{eqn:equivalence relation}
\beta_1\leq \frac{|\hat{\boldsymbol{w}}|_{1,T}^2}{|\hat{\boldsymbol{w}}|_{D,T}^2}\leq \beta_2, \qquad
\beta_1\leq \frac{|\hat{\boldsymbol{w}}|_{1,\Omega}^2}{|\hat{\boldsymbol{w}}|_D^2}\leq \beta_2,
\end{align}
for all $T\in\mathcal{T}$ and $\hat{\boldsymbol{w}}\in WV_{k+d+1}$.
\end{lemma}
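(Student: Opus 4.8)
The plan is to reduce both stated equivalences to a single scale-free finite-dimensional norm equivalence on a reference simplex, in the spirit of the reference-element argument of \cite{Hakula2017}. First, I would note that the second (global) chain of inequalities follows from the first (local) one by summation: since $|\hat{\boldsymbol{w}}|_{1,\Omega}^2=\sum_{T\in\mathcal{T}}|\hat{\boldsymbol{w}}_T|_{1,T}^2$ and $|\hat{\boldsymbol{w}}|_D^2=\sum_{T\in\mathcal{T}}|\hat{\boldsymbol{w}}|_{D,T}^2$, once the local bound $\beta_1|\hat{\boldsymbol{w}}|_{D,T}^2\le|\hat{\boldsymbol{w}}_T|_{1,T}^2\le\beta_2|\hat{\boldsymbol{w}}|_{D,T}^2$ holds on every $T$ with $T$-independent constants, adding over $T$ gives the global statement with the same $\beta_1,\beta_2$. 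So it suffices to prove the local equivalence with constants independent of $h$. I would also record here that both quantities are genuine norms on the finite-dimensional space $WV_{k+d+1}(T)$: writing $\boldsymbol{x}_{v,T}$ for the local velocity coefficient vector of $\hat{\boldsymbol{w}}_T$, one has $|\hat{\boldsymbol{w}}_T|_{1,T}^2=\boldsymbol{x}_{v,T}^TA_T\boldsymbol{x}_{v,T}$ and $|\hat{\boldsymbol{w}}|_{D,T}^2=\boldsymbol{x}_{v,T}^TD_T\boldsymbol{x}_{v,T}$ with $A_T$ the local stiffness matrix of the bubble basis and $D_T=\operatorname{diag}(A_T)$; $A_T$ is symmetric positive definite because $|\cdot|_{1,T}$ is a norm on $WV_{k+d+1}(T)$ (every element and face bubble vanishes at all vertices of $T$, so $R_{k+d+1}(T)\supset WV_{k+d+1}(T)$ contains no nonzero constant), and then $D_T$ is SPD as well since its diagonal entries $|\varphi_{T,j}|_{1,T}^2$ are strictly positive.

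Next I would pass to a reference simplex $\hat T$ via the affine map $\Phi_T\colon\hat T\to T$ with $B_T=D\Phi_T$. The element- and face-bubble structure defining $R_m$, hence $WV_{k+d+1}$, is affine invariant, so the pull-back $v\mapsto v\circ\Phi_T$ carries $WV_{k+d+1}(T)$ onto the fixed space $WV_{k+d+1}(\hat T)$ and the local basis onto a reference basis up to a diagonal rescaling of the coefficients, so that the diagonal truncation commutes with the change of variables. Applying the standard change-of-variables bounds $\|B_T\|^{-2}|\det B_T|\,|\hat\phi|_{1,\hat T}^2\le|\phi|_{1,T}^2\le\|B_T^{-1}\|^2|\det B_T|\,|\hat\phi|_{1,\hat T}^2$ both to $\hat{\boldsymbol{w}}_T$ and to each basis contribution $x_{T,j}\varphi_{T,j}$, one sees that the factor $|\det B_T|$ occurs in the numerator and the denominator of the ratio $|\hat{\boldsymbol{w}}_T|_{1,T}^2/|\hat{\boldsymbol{w}}|_{D,T}^2$ and cancels, leaving that ratio trapped between $(\|B_T\|\,\|B_T^{-1}\|)^{-2}$ and $(\|B_T\|\,\|B_T^{-1}\|)^{2}$ times the corresponding ratio on $\hat T$. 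By shape-regularity of $\mathcal{T}$ the affine condition number $\|B_T\|\,\|B_T^{-1}\|$ is bounded by a constant depending only on $d$ and the shape-regularity parameter, so these prefactors are harmless.

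It then remains to bound the reference-element ratio, i.e.\ to compare $\widehat{\boldsymbol{x}}^T\hat A\widehat{\boldsymbol{x}}$ with $\widehat{\boldsymbol{x}}^T\hat D\widehat{\boldsymbol{x}}$, where $\hat A$ is the stiffness matrix of $WV_{k+d+1}(\hat T)$ in the reference basis and $\hat D=\operatorname{diag}(\hat A)$; both are SPD on the fixed finite-dimensional coefficient space, hence $\hat\beta_1:=\lambda_{\min}(\hat D^{-1/2}\hat A\hat D^{-1/2})>0$ and $\hat\beta_2:=\lambda_{\max}(\hat D^{-1/2}\hat A\hat D^{-1/2})<\infty$ depend only on $d$ and $k$, and one takes $\beta_1$ (resp.\ $\beta_2$) to be $\hat\beta_1$ divided (resp.\ $\hat\beta_2$ multiplied) by the squared condition-number bound. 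The main obstacle is exactly this $h$-independence bookkeeping: one must verify that after scaling the only surviving constants are the reference eigenvalue bounds and powers of the affine condition number controlled by shape-regularity, which is why it is essential to use a reference basis that pulls back correctly, so that the diagonal of $A_T$ is a scaled copy of the diagonal of $\hat A$ and the truncation does not interact with the affine map; this is the step I would write out most carefully. If the family $\mathcal{T}$ admits a continuum of element shapes rather than a single reference shape, the argument is unchanged once $\hat\beta_1,\hat\beta_2$ are replaced by the infimum, resp.\ supremum, of these eigenvalue bounds over the compact set of simplex shapes admitted by the shape-regularity assumption.
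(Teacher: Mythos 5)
Your proposal is correct but proceeds along a genuinely different route from the paper. The paper works directly on $T$ and peels off one local basis function at a time using the strengthened Cauchy--Schwarz inequality of \cite{Eijkhout1991}: for the lower bound it iterates $|\sum_{i\geq j}\hat{\boldsymbol{w}}_{T,i}|_{1,T}^2 \geq (1-\gamma_{v,T})\bigl(|\hat{\boldsymbol{w}}_{T,j}|_{1,T}^2 + |\sum_{i\geq j+1}\hat{\boldsymbol{w}}_{T,i}|_{1,T}^2\bigr)$ to obtain $\beta_{1T}=(1-\gamma_{v,T})^{N_{v,T}}$, and for the upper bound it invokes plain Cauchy--Schwarz to get $\beta_{2T}=N_{v,T}$, finally setting $\beta_1=\min_T\beta_{1T}$, $\beta_2=\max_T\beta_{2T}$. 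You instead pass to a reference simplex, show the ratio $|\hat{\boldsymbol{w}}_T|_{1,T}^2/|\hat{\boldsymbol{w}}|_{D,T}^2$ is trapped between the corresponding reference ratio rescaled by $(\|B_T\|\,\|B_T^{-1}\|)^{\pm2}$, and identify the reference bounds with the extreme eigenvalues of the diagonally preconditioned stiffness matrix $\hat D^{-1/2}\hat A\hat D^{-1/2}$. The two approaches buy different things: the paper's iterated strengthened-Cauchy--Schwarz argument is shorter and produces the more suggestive constant $(1-\gamma_{v,T})^{N_{v,T}}$, but it leaves the $h$-independence of $\gamma_{v,T}$ (and hence of $\beta_1$) implicit, resting on the reader knowing that the angle constant depends only on the shape of $T$ via shape regularity; your reference-element argument makes precisely this scaling explicit, so it is effectively a more rigorous version of what the paper leaves as an assertion. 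One imprecision in your write-up worth fixing before it would stand on its own: the diagonal of $A_T$ is \emph{not} a scaled copy of the diagonal of $\hat A$ --- each diagonal entry is $|\det B_T|\int_{\hat T}|B_T^{-T}\nabla\hat\phi_j|^2$, and the directional factor $B_T^{-T}$ mixes in a $j$-dependent way --- so the diagonal truncation does not literally commute with the affine map. What saves your argument, and what you should say, is that the two-sided change-of-variables bounds you already quote apply separately to the numerator $|\hat{\boldsymbol{w}}_T|_{1,T}^2$ and to each diagonal entry $|\varphi_{T,j}|_{1,T}^2$; after dividing, the determinant cancels and only the squared affine condition number survives, which is bounded by shape regularity. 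With that correction your argument is complete.
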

\begin{proof}
We claim that there exist two positive constants $\beta_{1T}$ and
$\beta_{2T}$ independent of $h$ such that

\vspace{-10pt}
\begin{align}\label{eqn:equivalence relation in T}
\beta_{1T}\sum_{j=1}^{N_{v,T}}|\hat{\boldsymbol{w}}_{T,j}|_{1,T}^2\leq |\hat{\boldsymbol{w}}_T|_{1,T}^2\leq
\beta_{2T}\sum_{j=1}^{N_{v,T}}|\hat{\boldsymbol{w}}_{T,j}|_{1,T}^2, \quad T\in \mathcal{T}_h.
\end{align}
where $N_{v,T}$ is the number of basis functions of velocity in element $T$.

For the first inequality in (\ref{eqn:equivalence relation in T}),
divide $\Lambda=\{j\in N^{+}|1\leq j \leq N_{v,T}\}$ into two subsets $\Lambda=\Lambda_1
\cup \Lambda_2$ with $\Lambda_1\cap\Lambda_2=\emptyset$.
From Theorem 1 in \cite{Eijkhout1991}, it gets
that

\vspace{-10pt}
\begin{align}\label{eqn:stren Cauchy inequality 1}
(\sum_{j_1\in\Lambda_1}\nabla\hat{\boldsymbol{w}}_{T,j_1},\sum_{j_2\in\Lambda_2}\nabla\hat{\boldsymbol{w}}_{T,j_2}) \leq
\gamma_{v,T}|\sum_{j_1\in\Lambda_1}\hat{\boldsymbol{w}}_{T,j_1}|_{1,T}|\sum_{j_2\in\Lambda_2}\hat{\boldsymbol{w}}_{T,j_2}|_{1,T},
\end{align}
where $0\leq \gamma_{v,T}<1$ is independent of $h$. Using the  strengthened Cauchy inequality (\ref{eqn:stren Cauchy inequality 1}) and
Cauchy-Schwarz inequality, we deduce

\vspace{-10pt}
\begin{align*}
|\hat{\boldsymbol{w}}_T|_{1,T}^2&=|\sum_{j=1}^{N_{v,T}}\hat{\boldsymbol{w}}_{T,j}|_{1,T}^2
=(\sum_{j=1}^{N_{v,T}}\nabla\hat{\boldsymbol{w}}_{T,j},\sum_{j=1}^{N_{v,T}}\nabla\hat{\boldsymbol{w}}_{T,j})\\
&=|\hat{\boldsymbol{w}}_{T,1}|_{1,T}^2+|\sum_{j=2}^{N_{v,T}}\hat{\boldsymbol{w}}_{T,j}|_{1,T}^2
+2(\nabla\hat{\boldsymbol{w}}_{T,1},\sum_{j=2}^{N_{v,T}}\nabla\hat{\boldsymbol{w}}_{T,j})\\
&\geq |\hat{\boldsymbol{w}}_{T,1}|_{1,T}^2+|\sum_{j=2}^{N_{v,T}}\hat{\boldsymbol{w}}_{T,j}|_{1,T}^2
-2\gamma_{v,T}|\hat{\boldsymbol{w}}_{T,1}|_{1,T}|
\sum_{j=2}^{N_{v,T}}\hat{\boldsymbol{w}}_{T,j}|_{1,T}\\
&\geq (1-\gamma_{v,T})|\hat{\boldsymbol{w}}_{T,1}|_{1,T}^2+(1-\gamma_{v,T})
|\sum_{j=2}^{N_{v,T}}\hat{\boldsymbol{w}}_{T,j}|_{1,T}^2.
\end{align*}
By a similar argument, we obtain

\vspace{-10pt}
\begin{align*}
|\hat{\boldsymbol{w}}_T|_{1,T}^2=|\sum_{j=1}^{N_{v,T}}\hat{\boldsymbol{w}}_{T,j}|_{1,T}^2\geq
\sum_{j=1}^{N_{v,T}}(1-\gamma_{v,T})^{j}|\hat{\boldsymbol{w}}_{T,j}|_{1,T}^2 \geq
(1-\gamma_{v,T})^{N_{v,T}}\sum_{j=1}^{N_{v,T}}|\hat{\boldsymbol{w}}_{T,j}|_{1,T}^2,
\end{align*}
which implies the first inequality in (\ref{eqn:equivalence relation
in T}) with $\beta_{1T}=(1-\gamma_{v,T})^{N_{v,T}}$.

The second inequality in (\ref{eqn:equivalence relation in T})
follows from the Cauchy-Schwarz inequality with $\beta_{2T}=N_{v,T}$.
Therefore, the claim (\ref{eqn:equivalence relation in T}) holds.
Summing up (\ref{eqn:equivalence relation in T}) overall $T\in
\mathcal{T}$ and noting

\vspace{-10pt}
\begin{align*}
\frac{|\hat{\boldsymbol{w}}|_W^2}{|\hat{\boldsymbol{w}}|_D^2}
=\frac{\sum\limits_{T\in\mathcal{T}_h}|\hat{\boldsymbol{w}}_T|_{1,T}^2}
{\sum\limits_{T\in\mathcal{T}}\sum_{j=1}^{N_{v,T}}|\hat{\boldsymbol{w}}_{T,j}|_{1,T}^2},
\end{align*}
we arrive at the conclusion (\ref{eqn:equivalence relation}) with
$\beta_{1}=\min\limits_{T \in \mathcal{T}}(1-\gamma_{v,T})^{N_{v,T}}$ and
$\beta_{2}=\max\limits_{T\in\mathcal{T}}{N_{v,T}}$.
\end{proof}

\begin{lemma}\label{lem:a3 continuous}
For any $(\hat{\boldsymbol{v}},\hat{q}),(\hat{\boldsymbol{w}},\hat{r})\in W_{k+d}$, we have

\vspace{-10pt}
\begin{align}\label{eqn:Cauchy-Schwarz inequality for a3}
a_2((\hat{\boldsymbol{v}},\hat{q}),(\hat{\boldsymbol{w}},\hat{r}))\leq \mathfrak{C}_2\|(\hat{\boldsymbol{w}},\hat{r})\|_V
\|(\hat{\boldsymbol{v}},\hat{q})\|_V,
\end{align}
where $\mathfrak{C}_2$ is a positive constant.
\end{lemma}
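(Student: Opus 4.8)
The plan is to expand $a_2$ through its matrix definition (\ref{eqn:a3 bilinear})--(\ref{eqn:define Du}) into three pieces and estimate each separately. Writing $(\hat{\boldsymbol{v}},\hat q)=\sum_j x_j\phi_j$ and $(\hat{\boldsymbol{w}},\hat r)=\sum_j y_j\phi_j$ and splitting the coefficient vectors into their velocity and pressure parts $\boldsymbol{x}_v,\boldsymbol{x}_p$ and $\boldsymbol{y}_v,\boldsymbol{y}_p$, the block structure of $M_v$ gives
\begin{align*}
a_2((\hat{\boldsymbol{v}},\hat q),(\hat{\boldsymbol{w}},\hat r))
=\boldsymbol{y}_v^T D_v\boldsymbol{x}_v+\boldsymbol{y}_v^T B\boldsymbol{x}_p-\boldsymbol{y}_p^T B^T\boldsymbol{x}_v
=\boldsymbol{y}_v^T D_v\boldsymbol{x}_v-b(\hat{\boldsymbol{w}},\hat q)+b(\hat{\boldsymbol{v}},\hat r),
\end{align*}
where the last equality follows from the definition of $B$ and the bilinearity of $b$ (comparing with $a_1((\hat{\boldsymbol{v}},\hat q),(\hat{\boldsymbol{w}},\hat r))=\boldsymbol{y}^TM\boldsymbol{x}=a(\hat{\boldsymbol{v}},\hat{\boldsymbol{w}})-b(\hat{\boldsymbol{w}},\hat q)+b(\hat{\boldsymbol{v}},\hat r)$). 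In other words, $a_2$ is obtained from $a_1$ by replacing the velocity block $a(\hat{\boldsymbol{v}},\hat{\boldsymbol{w}})=\boldsymbol{y}_v^T A\boldsymbol{x}_v$ with $\boldsymbol{y}_v^T D_v\boldsymbol{x}_v$, so only that term needs a new argument.

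For the first term I would observe that $D_v$ is diagonal with strictly positive entries $a(\varphi_\ell,\varphi_\ell)=|\varphi_\ell|_{1,\Omega}^2>0$, hence $(\boldsymbol{x},\boldsymbol{y})\mapsto\boldsymbol{y}^T D_v\boldsymbol{x}$ is an inner product and the Cauchy--Schwarz inequality gives
\begin{align*}
|\boldsymbol{y}_v^T D_v\boldsymbol{x}_v|
\leq(\boldsymbol{y}_v^T D_v\boldsymbol{y}_v)^{1/2}(\boldsymbol{x}_v^T D_v\boldsymbol{x}_v)^{1/2}
=|\hat{\boldsymbol{w}}|_D\,|\hat{\boldsymbol{v}}|_D
\end{align*}
by (\ref{eqn:norm associate with a3 about velocity}). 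Since the velocity components $\hat{\boldsymbol{v}},\hat{\boldsymbol{w}}$ lie in $WV_{k+d+1}$, Lemma~\ref{lem:A and Du} yields $|\hat{\boldsymbol{v}}|_D^2\leq\beta_1^{-1}|\hat{\boldsymbol{v}}|_{1,\Omega}^2$ and the same for $\hat{\boldsymbol{w}}$, so $|\boldsymbol{y}_v^T D_v\boldsymbol{x}_v|\leq\beta_1^{-1}|\hat{\boldsymbol{w}}|_{1,\Omega}\,|\hat{\boldsymbol{v}}|_{1,\Omega}$. For the two coupling terms I would use the elementary estimate
\begin{align*}
|b(\hat{\boldsymbol{v}},\hat r)|=\Big|\int_\Omega\hat r\,\nabla\cdot\hat{\boldsymbol{v}}\Big|
\leq\|\hat r\|\,\|\nabla\cdot\hat{\boldsymbol{v}}\|\leq\sqrt{d}\,\|\hat r\|\,|\hat{\boldsymbol{v}}|_{1,\Omega},
\end{align*}
together with the analogous bound for $b(\hat{\boldsymbol{w}},\hat q)$.

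Collecting the three estimates and bounding each of the products $|\hat{\boldsymbol{w}}|_{1,\Omega}|\hat{\boldsymbol{v}}|_{1,\Omega}$, $\|\hat q\|\,|\hat{\boldsymbol{w}}|_{1,\Omega}$ and $\|\hat r\|\,|\hat{\boldsymbol{v}}|_{1,\Omega}$ by $\|(\hat{\boldsymbol{v}},\hat q)\|_V\,\|(\hat{\boldsymbol{w}},\hat r)\|_V$ — using $|\hat{\boldsymbol{v}}|_{1,\Omega}\leq\|(\hat{\boldsymbol{v}},\hat q)\|_V$, $\|\hat q\|\leq\|(\hat{\boldsymbol{v}},\hat q)\|_V$ and their counterparts for $(\hat{\boldsymbol{w}},\hat r)$ — yields (\ref{eqn:Cauchy-Schwarz inequality for a3}) with, for instance, $\mathfrak{C}_2=\beta_1^{-1}+2\sqrt{d}$. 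I do not expect any genuine obstacle here; the proof is essentially the Cauchy--Schwarz argument for $a_1$ with the diagonal substitution absorbed by Lemma~\ref{lem:A and Du}. The only point requiring a little care is that the norm equivalence in Lemma~\ref{lem:A and Du} is stated for members of $WV_{k+d+1}$, so it must be invoked for the velocity components of the pairs in $W_{k+d}$, which is exactly what the block splitting in the first display isolates; note also that only the lower equivalence constant $\beta_1$ enters, not $\beta_2$.
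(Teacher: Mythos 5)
Your argument is correct and proves the lemma as printed: the block expansion of $M_v$, the Cauchy--Schwarz step for the $D_v$-inner product, the bound $\|\nabla\cdot\hat{\boldsymbol{v}}\|\le\sqrt{d}\,|\hat{\boldsymbol{v}}|_{1,\Omega}$, and the passage from $|\cdot|_D$ to $|\cdot|_{1,\Omega}$ via Lemma~\ref{lem:A and Du} are all in order, and $\mathfrak{C}_2=\beta_1^{-1}+2\sqrt d$ works. Be aware, though, of a discrepancy with the source that matters downstream. The paper's own proof does not end at $\|\cdot\|_V$; its final line is $a_2\le\mathfrak{C}_2\|(\hat{\boldsymbol{v}},\hat{q})\|_D\|(\hat{\boldsymbol{w}},\hat{r})\|_D$ with $\mathfrak{C}_2=\max(\sqrt d,\,d\sqrt{\beta_2})$, and that $\|\cdot\|_D$ form is the one actually invoked in Lemma~\ref{lem:e_h estimate e_h hat}, where $a_2(\tilde{\boldsymbol e}_u,\tilde e_p;\hat{\boldsymbol v},\hat q)$ is bounded by $\mathfrak{C}_2\|\cdot\|_D\|\cdot\|_D$ before the $\|\cdot\|_D\!\to\!\|\cdot\|_V$ conversion is done separately. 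So the $\|\cdot\|_V$ in the statement is evidently a misprint for $\|\cdot\|_D$, and the two proofs diverge precisely in the direction of the norm equivalence: you convert $|\cdot|_D$ down to $|\cdot|_{1,\Omega}$ using $\beta_1$ to land in $\|\cdot\|_V$, whereas the paper converts $|\cdot|_{1,\Omega}$ up to $|\cdot|_D$ using $\beta_2$ to stay in $\|\cdot\|_D$. Both are legitimate because the two norms are equivalent on $WV_{k+d+1}$, but the explicit constants $\tilde{\mathfrak{C}}^*$ and $\bar{\mathfrak{C}}^*$ in Theorems~\ref{theo:diag velocity approx error} and~\ref{theo:diag pressure approx error} are assembled from the $\|\cdot\|_D$ version of $\mathfrak{C}_2$; to feed your variant into those chains you would need one more application of Lemma~\ref{lem:A and Du} to return to $\|\cdot\|_D$, which would reintroduce $\beta_2$ anyway.
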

\begin{proof}
For any $(\hat{\boldsymbol{v}},\hat{q})=\sum\limits_{j=1}^Nx_j\phi_j$,
$(\hat{\boldsymbol{w}},\hat{r})=\sum\limits_{j=1}^Ny_j\phi_j\in W_{k+d}$, define
$\boldsymbol{x}=(x_1,\cdots,x_N)^{T}$ and
$\boldsymbol{y}=(y_1,\cdots,y_N)^{T}$. Let $\boldsymbol{x}_v$ and
$\boldsymbol{y}_v$ be vectors composed of elements related to velocity
in $\boldsymbol{x}$ and $\boldsymbol{y}$, respectively. Similarly, Let
$\boldsymbol{x}_p$ and $\boldsymbol{y}_p$ be vectors composed of elements
related to pressure in $\boldsymbol{x}$ and $\boldsymbol{y}$, respectively.
Then using (\ref{eqn:a3 bilinear})$\sim$(\ref{eqn:norm associate
with a3 about velocity}), Cauchy-Schwarz inequality, and
Lemma~\ref{lem:A and Du}, we have

\vspace{-10pt}
\begin{align*}
a_2((\hat{\boldsymbol{v}},\hat{q}),(\hat{\boldsymbol{w}},\hat{r}))&=\boldsymbol{y}^TM_v\boldsymbol{x}
=\begin{bmatrix}
\boldsymbol{y}_v^T & \boldsymbol{y}_p^T
\end{bmatrix}
\begin{bmatrix}
D_v & B  \\
-B^T & 0
\end{bmatrix}
\begin{bmatrix}
\boldsymbol{x}_v\\
\boldsymbol{x}_p
\end{bmatrix}\\
&=\boldsymbol{y}_v^TD_v\boldsymbol{x}_v-\boldsymbol{y}_p^TB^T\boldsymbol{x}_v+\boldsymbol{y}_v^TB\boldsymbol{x}_p\\
&\leq |\hat{\boldsymbol{v}}|_D|\hat{\boldsymbol{w}}|_D-(\nabla\cdot\hat{\boldsymbol{v}},\hat{r})+(\nabla\cdot\hat{\boldsymbol{w}},\hat{q})\\
&\leq |\hat{\boldsymbol{v}}|_D|\hat{\boldsymbol{w}}|_D+\|\nabla\cdot\hat{\boldsymbol{v}}\|\|\hat{r}\|+\|\nabla\cdot\hat{\boldsymbol{w}}\|\|\hat{q}\|\\
&\leq |\hat{\boldsymbol{v}}|_D|\hat{\boldsymbol{w}}|_D+\sqrt{d}|\hat{\boldsymbol{v}}|_{1,\Omega}\|\hat{r}\|+\sqrt{d}|\hat{\boldsymbol{w}}|_{1,\Omega}\|\hat{q}\|\\
&\leq |\hat{\boldsymbol{v}}|_D|\hat{\boldsymbol{w}}|_D+\sqrt{d}\sqrt{\beta_2}|\hat{\boldsymbol{v}}|_D\|\hat{r}\|+\sqrt{d}\sqrt{\beta_2}|\hat{\boldsymbol{w}}|_D\|\hat{q}\|\\
&\leq \sqrt{d|\hat{\boldsymbol{v}}|_D^2|\hat{\boldsymbol{w}}|_D^2+d^2\beta_2(|\hat{\boldsymbol{v}}|_D^2\|\hat{r}\|^2+|\hat{\boldsymbol{w}}|_D^2\|\hat{q}\|^2)}\\
&\leq \mathfrak{C}_2\|(\hat{\boldsymbol{v}},\hat{q})\|_D\|(\hat{\boldsymbol{w}},\hat{r})\|_D,
\end{align*}
where $\mathfrak{C}_2=\max{(\sqrt{d},d\sqrt{\beta_2})}$.
\end{proof}

\begin{lemma}\label{lem:a3 inf sup}
The bilinear form $a_2((\hat{\boldsymbol{v}},\hat{q}),(\hat{\boldsymbol{w}},\hat{r}))$ satisfies the estimate

\vspace{-10pt}
\begin{align*}
\inf_{(\hat{\boldsymbol{v}},\hat{q})\in W_{k+d}\backslash
\{0\}}\sup_{(\hat{\boldsymbol{w}},\hat{r})\in W_{k+d}\backslash
\{0\}}\frac{a_2((\hat{\boldsymbol{v}},\hat{q}),(\hat{\boldsymbol{w}},\hat{r}))}
{\|(\hat{\boldsymbol{v}},\hat{q})\|_D\|(\hat{\boldsymbol{w}},\hat{r})\|_D}
\geq\frac{(\mu\beta_1)^2}{(1+\mu\beta_1)^2},
\end{align*}
where $\mu$ and $\beta_1$ are the constants in
Lemma~\ref{lem:inf-sup in com space} and Lemma~\ref{lem:A
and Du}, respectively.
\end{lemma}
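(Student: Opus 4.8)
The plan is to replay, essentially line by line, the proof of Lemma~\ref{lem:inf-sup B}, with the form $a_1$ replaced by $a_2$, the velocity seminorm $\|\nabla\,\cdot\,\|$ replaced by the discrete norm $|\cdot|_D$ of (\ref{eqn:norm associate with a3 about velocity}), and the inf--sup constant $\mu$ replaced by $\mu\beta_1$. The first ingredient is the ``diagonal coercivity'' identity
\begin{align*}
a_2((\hat{\boldsymbol{v}},\hat{q}),(\hat{\boldsymbol{v}},\hat{q}))=|\hat{\boldsymbol{v}}|_D^2,\qquad (\hat{\boldsymbol{v}},\hat{q})\in W_{k+d},
\end{align*}
which follows from the block expansion already used in the proof of Lemma~\ref{lem:a3 continuous}: writing $(\hat{\boldsymbol{v}},\hat{q})=\sum_jx_j\phi_j$ and splitting $\boldsymbol{x}=(\boldsymbol{x}_v,\boldsymbol{x}_p)$ into velocity and pressure blocks, one has $\boldsymbol{x}^TM_v\boldsymbol{x}=\boldsymbol{x}_v^TD_v\boldsymbol{x}_v+\boldsymbol{x}_v^TB\boldsymbol{x}_p-\boldsymbol{x}_p^TB^T\boldsymbol{x}_v$, and the two off-diagonal contributions cancel (each is the transpose of the other, hence equal, both being scalars), leaving $\boldsymbol{x}_v^TD_v\boldsymbol{x}_v=|\hat{\boldsymbol{v}}|_D^2$ by (\ref{eqn:norm associate with a3 about velocity}).

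The second ingredient is a pressure-recovery velocity field normalized in $|\cdot|_D$. Fix $(\hat{\boldsymbol{v}},\hat{q})\in W_{k+d}\backslash\{0\}$. Lemma~\ref{lem:inf-sup in com space} provides some $\hat{\boldsymbol{w}}\in WV_{k+d+1}$ with $b(\hat{\boldsymbol{w}},\hat{q})\ge\mu\|\hat{q}\|\,\|\hat{\boldsymbol{w}}\|_{1,\Omega}$; rescaling it to unit $|\cdot|_D$-norm and using the equivalence $\beta_1|\hat{\boldsymbol{w}}|_D^2\le|\hat{\boldsymbol{w}}|_{1,\Omega}^2$ from Lemma~\ref{lem:A and Du} (together with the fact that $\beta_1=\min_T(1-\gamma_{v,T})^{N_{v,T}}\le1$) yields $\hat{\boldsymbol{w}}_{\hat{q}}\in WV_{k+d+1}$ with
\begin{align*}
|\hat{\boldsymbol{w}}_{\hat{q}}|_D=1,\qquad b(\hat{\boldsymbol{w}}_{\hat{q}},\hat{q})\ge\mu\beta_1\|\hat{q}\|
\end{align*}
after fixing its sign. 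This transfer of the inf--sup bound from the $\|\cdot\|_{1,\Omega}$-normalization to the $|\cdot|_D$-normalization is the only genuinely new point compared with Lemma~\ref{lem:inf-sup B}, and I expect it to be where one has to be careful that the surviving constant is $\mu\beta_1$ rather than some power of $\beta_2$.

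The third ingredient is the bilinear-form computation. Writing $\langle\cdot,\cdot\rangle_D$ for the inner product on $WV_{k+d+1}$ induced by the diagonal matrix $D_v$ (so $\langle\hat{\boldsymbol{v}},\hat{\boldsymbol{v}}\rangle_D=|\hat{\boldsymbol{v}}|_D^2$), the same block expansion gives $a_2((\hat{\boldsymbol{v}},\hat{q}),(\hat{\boldsymbol{w}},\hat{r}))=\langle\hat{\boldsymbol{v}},\hat{\boldsymbol{w}}\rangle_D-b(\hat{\boldsymbol{w}},\hat{q})+b(\hat{\boldsymbol{v}},\hat{r})$, so that, exactly as in Lemma~\ref{lem:inf-sup B},
\begin{align*}
a_2((\hat{\boldsymbol{v}},\hat{q}),(\hat{\boldsymbol{v}}-\delta\|\hat{q}\|\hat{\boldsymbol{w}}_{\hat{q}},\hat{q}))=|\hat{\boldsymbol{v}}|_D^2-\delta\|\hat{q}\|\langle\hat{\boldsymbol{v}},\hat{\boldsymbol{w}}_{\hat{q}}\rangle_D+\delta\|\hat{q}\|\,b(\hat{\boldsymbol{w}}_{\hat{q}},\hat{q}).
\end{align*}
Bounding $\langle\hat{\boldsymbol{v}},\hat{\boldsymbol{w}}_{\hat{q}}\rangle_D\le|\hat{\boldsymbol{v}}|_D|\hat{\boldsymbol{w}}_{\hat{q}}|_D=|\hat{\boldsymbol{v}}|_D$ by Cauchy--Schwarz, using $b(\hat{\boldsymbol{w}}_{\hat{q}},\hat{q})\ge\mu\beta_1\|\hat{q}\|$, and applying Young's inequality with weight $\mu\beta_1$ produces the lower bound $(1-\frac{\delta}{2\mu\beta_1})|\hat{\boldsymbol{v}}|_D^2+\frac{\delta\mu\beta_1}{2}\|\hat{q}\|^2$; the choice $\delta=\frac{2\mu\beta_1}{1+(\mu\beta_1)^2}$ equalizes the two coefficients and gives $\frac{(\mu\beta_1)^2}{1+(\mu\beta_1)^2}\|(\hat{\boldsymbol{v}},\hat{q})\|_D^2$. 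On the other hand, since $|\hat{\boldsymbol{w}}_{\hat{q}}|_D=1$ one has $\|(\hat{\boldsymbol{v}}-\delta\|\hat{q}\|\hat{\boldsymbol{w}}_{\hat{q}},\hat{q})\|_D\le(1+\delta)\|(\hat{\boldsymbol{v}},\hat{q})\|_D=\frac{(1+\mu\beta_1)^2}{1+(\mu\beta_1)^2}\|(\hat{\boldsymbol{v}},\hat{q})\|_D$. Dividing these two estimates, bounding the supremum over $(\hat{\boldsymbol{w}},\hat{r})\in W_{k+d}\backslash\{0\}$ from below by the value at $(\hat{\boldsymbol{v}}-\delta\|\hat{q}\|\hat{\boldsymbol{w}}_{\hat{q}},\hat{q})$, and noting that $(\hat{\boldsymbol{v}},\hat{q})$ was arbitrary yields the asserted bound $\frac{(\mu\beta_1)^2}{(1+\mu\beta_1)^2}$.
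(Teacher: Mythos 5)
Your proof is correct and follows essentially the same route as the paper's: use the inf--sup field from Lemma~\ref{lem:inf-sup in com space}, renormalize to $|\cdot|_D$-unit norm via Lemma~\ref{lem:A and Du}, and then replay the Verf\"urth-style test-function argument of Lemma~\ref{lem:inf-sup B} with $\mu$ replaced by $\mu\beta_1$ and $\|\nabla\cdot\|$ by $|\cdot|_D$. You are in fact slightly more careful than the paper at the renormalization step: Lemma~\ref{lem:A and Du} naturally gives $|\hat{\boldsymbol{w}}_{\hat{q}}|_{1,\Omega}\ge\sqrt{\beta_1}\,|\hat{\boldsymbol{w}}_{\hat{q}}|_D$, so the surviving constant is a priori $\mu\sqrt{\beta_1}$, and you explicitly invoke $\beta_1\le1$ to weaken this to the $\mu\beta_1$ appearing in the statement; the paper writes $\mu\beta_1$ directly without flagging that $\sqrt{\beta_1}\ge\beta_1$ is being used.
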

\begin{proof}
To prove the inequality, we choose an arbitrary but fixed element
$(\hat{\boldsymbol{v}},\hat{q})\in WP_{k+d}\backslash \{0\}$. Due to Lemma~\ref{lem:inf-sup in com space},
there is a velocity field $\hat{\boldsymbol{w}}_{\hat{q}}\in
WV_{k+d+1}$ with $|\hat{\boldsymbol{w}}_{\hat{q}}|_D=1$ such that

\vspace{-10pt}
\begin{align*}
\sum_{T\in\mathcal{T}}\int_T\hat{q}\nabla\cdot
\hat{\boldsymbol{w}}_{\hat{q}}dx=\int_\Omega \hat{q}\nabla\cdot \hat{\boldsymbol{w}}_{\hat{q}}dx\geq \mu
\|\hat{q}\|.
\end{align*}
By using Cauchy-Schwartz inequality, Lemma
\ref{lem:A and Du}, Lemma \ref{lem:inf-sup in com space},
and noting $|\hat{\boldsymbol{w}}_{\hat{q}}|_D=1$, we therefore obtain for every $\delta>0$,

\vspace{-10pt}
\begin{align*}
&a_2((\hat{\boldsymbol{v}},\hat{q}),(\hat{\boldsymbol{v}}-\delta\|\hat{q}\|\hat{\boldsymbol{w}}_{\hat{q}},\hat{q}))\\
=&a_2((\hat{\boldsymbol{v}},\hat{q}),(\hat{\boldsymbol{v}},\hat{q}))
-\delta\|\hat{q}\|a_2((\hat{\boldsymbol{v}},\hat{q}),(\hat{\boldsymbol{w}}_{\hat{q}},0))\\
=&|\hat{\boldsymbol{v}}|_D^2-\delta\|\hat{q}\|\boldsymbol{y}_v^TD_v\boldsymbol{x}_v
+\delta\|\hat{q}\|\sum_{T\in\mathcal{T}}\int \hat{q} \nabla\cdot \hat{\boldsymbol{w}}_{\hat{q}}\\
\geq& |\hat{\boldsymbol{v}}|_D^2-\delta |\hat{\boldsymbol{v}}|_D\|\hat{q}\|+\delta \mu\|\hat{q}\|^2|\hat{\boldsymbol{w}}_{\hat{q}}|_{1,\Omega}\\
\geq& |\hat{\boldsymbol{v}}|_D^2-\delta |\hat{\boldsymbol{v}}|_D\|\hat{q}\|+\delta\mu\beta_1\|\hat{q}\|^2\\
\geq&
(1-\frac{\delta}{2\mu\beta_1})|\hat{\boldsymbol{v}}|_D^2+\frac{1}{2}\delta\mu\beta_1\|\hat{q}\|^2,
\end{align*}
where $\boldsymbol{x}_v=(x_1,x_2,\cdots,x_{N_v})^T$ and
$\boldsymbol{y}_v=(y_1,y_2,\cdots,y_{N_v})^T$ are such that
$\hat{\boldsymbol{v}}=\sum\limits_{j=1}^{N_{v}}x_j\varphi_j,
\hat{\boldsymbol{w}}_{\hat{q}}=\sum\limits_{j=1}^{N_{v}}y_j\varphi_j\in
WV_{k+d+1}$.

Similar to the proof in Lemma~\ref{lem:inf-sup B}, the choice of
$\delta=\frac{2\mu\beta_1}{1+(\mu\beta_1)^2}$ yields

\vspace{-10pt}
\begin{align*}
a_2((\hat{\boldsymbol{v}},\hat{q}),(\hat{\boldsymbol{v}}-\delta\|\hat{q}\|\hat{\boldsymbol{w}}_{\hat{q}},\hat{q}))\geq
\frac{(\mu\beta_1)^2}{1+(\mu\beta_1)^2}\|(\hat{\boldsymbol{v}},\hat{q})\|_D^2,
\end{align*}
and

\vspace{-10pt}
\begin{align*}
\|(\hat{\boldsymbol{v}}-\delta\|\hat{q}\|\hat{\boldsymbol{w}}_{\hat{q}},\hat{q})\|_D\leq
\frac{(1+\mu\beta_1)^2}{1+(\mu\beta_1)^2}\|(\hat{\boldsymbol{v}},\hat{q})\|_D.
\end{align*}
Then we arrive at

\vspace{-10pt}
\begin{align*}
\sup_{(\hat{\boldsymbol{w}},\hat{r})\in
W_{k+d}\backslash\{0\}}\frac{a_2((\hat{\boldsymbol{v}},\hat{q}),(\hat{\boldsymbol{w}},\hat{r}))}{\|(\hat{\boldsymbol{v}},\hat{q})\|_D\|(\hat{\boldsymbol{w}},\hat{r})\|_D}\geq
\frac{a_2((\hat{\boldsymbol{v}},\hat{q}),(\hat{\boldsymbol{v}}-\delta\|\hat{q}\|\hat{\boldsymbol{w}}_{\hat{q}},\hat{q}))}{\|(\hat{\boldsymbol{v}},\hat{q})\|_D
\|(\hat{\boldsymbol{v}}-\delta\|\hat{q}\|\hat{\boldsymbol{w}}_{\hat{q}},\hat{q})\|_D}
\geq\frac{(\mu\beta_1)^2}{(1+\mu\beta_1)^2}.
\end{align*}
Since $(\hat{\boldsymbol{v}},\hat{q})\in W_{k+d}\backslash\{0\}$ is arbitrary, this
completes the proof.
\end{proof}

Using Lemma \ref{lem:a3 continuous}, Lemma \ref{lem:a3 inf sup}, and a proof similar to that of Theorem
\ref{thm:unique of error problem}, we have the following conclusion.
\begin{theorem}
The finite element scheme (\ref{eqn:error in com space
easily}) has a unique solution.
\end{theorem}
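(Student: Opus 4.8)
The plan is to mirror the proof of Theorem~\ref{thm:unique of error problem} and appeal to Theorem 5.2.1 of \cite{Babuska1972}, whose hypotheses are: continuity of the bilinear form, an inf-sup condition, a positivity (quasi-coercivity) property excluding a nontrivial kernel, and a bounded right-hand side. The last of these is free, since $W_{k+d}$ is finite-dimensional and the functional $(\hat{\boldsymbol{v}},\hat{q})\mapsto f(\hat{\boldsymbol{v}})-a_1((\hat{\boldsymbol{u}},\hat{p}),(\hat{\boldsymbol{v}},\hat{q}))$ is precisely the one already shown to be continuous on $W_{k+d}$ in Theorem~\ref{thm:unique of error problem}. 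Thus the real work is to transfer Lemmas~\ref{lem:a3 continuous} and \ref{lem:a3 inf sup} into the form required and to supply the positivity estimate.

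First I would note that $\|\cdot\|_D$ is a genuine norm on $W_{k+d}$, equivalent to $\|\cdot\|_V$: the diagonal of $A$ has entries $a(\varphi_j,\varphi_j)=|\varphi_j|_{1,\Omega}^2>0$, so $D_v$ is positive definite and $|\cdot|_D$ is a norm on $WV_{k+d+1}$, while Lemma~\ref{lem:A and Du} gives $\beta_1|\hat{\boldsymbol{w}}|_D^2\le|\hat{\boldsymbol{w}}|_{1,\Omega}^2\le\beta_2|\hat{\boldsymbol{w}}|_D^2$, hence $\beta_1\|(\hat{\boldsymbol{w}},\hat{r})\|_D^2\le\|(\hat{\boldsymbol{w}},\hat{r})\|_V^2\le\beta_2\|(\hat{\boldsymbol{w}},\hat{r})\|_D^2$. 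Consequently the continuity bound of Lemma~\ref{lem:a3 continuous} and the inf-sup bound of Lemma~\ref{lem:a3 inf sup}, both phrased in $\|\cdot\|_D$, carry over to $\|\cdot\|_V$ with constants rescaled by powers of $\beta_1,\beta_2$. This verifies the first two hypotheses of Theorem 5.2.1 of \cite{Babuska1972}.

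Next I would check positivity. Writing $(\hat{\boldsymbol{v}},\hat{q})=\sum_j x_j\phi_j$ with velocity and pressure coefficient blocks $\boldsymbol{x}_v,\boldsymbol{x}_p$, the skew-symmetric coupling blocks of $M_v$ cancel on the diagonal,
\[
a_2((\hat{\boldsymbol{v}},\hat{q}),(\hat{\boldsymbol{v}},\hat{q}))
=\boldsymbol{x}_v^TD_v\boldsymbol{x}_v-\boldsymbol{x}_p^TB^T\boldsymbol{x}_v+\boldsymbol{x}_v^TB\boldsymbol{x}_p
=|\hat{\boldsymbol{v}}|_D^2,
\]
so that, by Lemma~\ref{lem:A and Du} and Poincar\'e's inequality,
$a_2((\hat{\boldsymbol{v}},\hat{q}),(\hat{\boldsymbol{v}},\hat{q}))\ge\beta_2^{-1}|\hat{\boldsymbol{v}}|_{1,\Omega}^2\ge C\|\hat{\boldsymbol{v}}\|^2>0$ whenever $\hat{\boldsymbol{v}}\neq\boldsymbol{0}$, exactly as in Theorem~\ref{thm:unique of error problem}. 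With continuity, the inf-sup condition, this positivity, and the bounded right-hand side established, Theorem 5.2.1 of \cite{Babuska1972} gives existence and uniqueness of $(\tilde{\boldsymbol{e}}_u,\tilde{e}_p)\in W_{k+d}$ solving (\ref{eqn:error in com space easily}).

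The only place where the argument deviates at all from the verbatim one of Theorem~\ref{thm:unique of error problem} — and hence the step deserving the most care — is confirming that replacing $A$ by $D_v$ preserves both the diagonal cancellation of the coupling terms (it does, since that cancellation uses only the block skew-symmetry of $M_v$, not any property of $A$) and the non-degeneracy of $|\cdot|_D$ on $WV_{k+d+1}$ (it does, since the diagonal of $A$ is strictly positive on the hierarchical basis). Everything else is a direct invocation of Lemmas~\ref{lem:a3 continuous}, \ref{lem:a3 inf sup}, and \ref{lem:A and Du}; I do not expect any genuine obstacle.
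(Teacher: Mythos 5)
Your proposal matches the paper's argument: the paper proves this theorem by citing Lemma~\ref{lem:a3 continuous}, Lemma~\ref{lem:a3 inf sup}, and ``a proof similar to that of Theorem~\ref{thm:unique of error problem},'' which is exactly the invocation of continuity, the inf-sup estimate, positivity via the diagonal identity $a_2((\hat{\boldsymbol{v}},\hat{q}),(\hat{\boldsymbol{v}},\hat{q}))=|\hat{\boldsymbol{v}}|_D^2$, and Theorem 5.2.1 of \cite{Babuska1972}. You have simply filled in the details the paper leaves implicit (the cancellation of the skew-symmetric coupling blocks and the equivalence of $\|\cdot\|_D$ and $\|\cdot\|_V$ via Lemma~\ref{lem:A and Du}), all of which are correct.
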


\begin{lemma}\label{lem:e_h estimate e_h hat}
Let $(\hat{\boldsymbol{e}}_u,\hat{e}_p)$ and
$(\tilde{\boldsymbol{e}}_u,\tilde{e}_p)$ be the solutions
of (\ref{eqn:error problem}) and (\ref{eqn:error in com space
easily}), respectively.

\vspace{-10pt}
\begin{align}\label{eqn:two error estimations in com space}
\frac{(\mu\beta_1)^2}{\mathfrak{C}_1(1+\mu\beta_1)^2\sqrt{\beta_2+1}}\|(\tilde{\boldsymbol{e}}_u,\tilde{e}_p)\|_D\leq
\|(\hat{\boldsymbol{e}}_u,\hat{e}_{ph})\|_V
\leq\frac{\mathfrak{C}_2\sqrt{1+\beta_1}(1+\mu)^2}{\sqrt{\beta_1}\mu^2}\|(\tilde{\boldsymbol{e}}_u,\tilde{e}_p)\|_D,
\end{align}
where $\|\cdot\|_V$ and $\|\cdot\|_D$ are defined in (\ref{eqn:norm
V}) and (\ref{eqn:norm associate with a3}),
respectively. The constants $\mathfrak{C}_1,\mathfrak{C}_2,\beta_1,\beta_2$, and $\mu$ are defined in (\ref{eqn:B continous}), (\ref{eqn:Cauchy-Schwarz inequality for a3}), (\ref{eqn:equivalence relation in T}), and (\ref{eqn:inf-sup in com space}).
\end{lemma}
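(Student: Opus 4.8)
The plan is to use the fact that the error problem (\ref{eqn:error problem}) and the second error problem (\ref{eqn:error in com space easily}) have the \emph{same} right-hand side functional
\[
\mathcal{G}(\hat{\boldsymbol{v}},\hat{q}):=f(\hat{\boldsymbol{v}})-a_1((\hat{\boldsymbol{u}},\hat{p}),(\hat{\boldsymbol{v}},\hat{q})),
\]
so that
\[
a_1((\hat{\boldsymbol{e}}_u,\hat{e}_p),(\hat{\boldsymbol{w}},\hat{r}))=\mathcal{G}(\hat{\boldsymbol{w}},\hat{r})=a_2((\tilde{\boldsymbol{e}}_u,\tilde{e}_p),(\hat{\boldsymbol{w}},\hat{r})),\qquad\forall\,(\hat{\boldsymbol{w}},\hat{r})\in W_{k+d}.
\]
Each of the two bounds is then obtained by testing the equation for one of the two solutions against a function selected by the relevant inf--sup property, replacing the bilinear form on the other side by means of the identity above, and closing with the continuity of the complementary bilinear form, up to converting between $\|\cdot\|_V$ and $\|\cdot\|_D$.

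The first preparatory step is the norm comparison on $W_{k+d}$. Lemma~\ref{lem:A and Du} gives $\beta_1|\hat{\boldsymbol{w}}|_D^2\le|\hat{\boldsymbol{w}}|_{1,\Omega}^2\le\beta_2|\hat{\boldsymbol{w}}|_D^2$ for all $\hat{\boldsymbol{w}}\in WV_{k+d+1}$; writing $a=|\hat{\boldsymbol{w}}|_{1,\Omega}^2$ or $a=|\hat{\boldsymbol{w}}|_D^2$, $b=\|\hat{r}\|^2$, and using the elementary bounds $\beta_2 a+b\le(\beta_2+1)(a+b)$ and $\beta_1^{-1}a+b\le\beta_1^{-1}(1+\beta_1)(a+b)$, one obtains
\[
\|(\hat{\boldsymbol{w}},\hat{r})\|_V\le\sqrt{\beta_2+1}\,\|(\hat{\boldsymbol{w}},\hat{r})\|_D,\qquad \|(\hat{\boldsymbol{w}},\hat{r})\|_D\le\frac{\sqrt{1+\beta_1}}{\sqrt{\beta_1}}\,\|(\hat{\boldsymbol{w}},\hat{r})\|_V,
\]
for every $(\hat{\boldsymbol{w}},\hat{r})\in W_{k+d}$.

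For the right-hand inequality I would invoke the inf--sup estimate of Lemma~\ref{lem:inf-sup B} for $a_1$, turn the numerator $a_1((\hat{\boldsymbol{e}}_u,\hat{e}_p),(\hat{\boldsymbol{w}},\hat{r}))$ into $a_2((\tilde{\boldsymbol{e}}_u,\tilde{e}_p),(\hat{\boldsymbol{w}},\hat{r}))$, bound it by the continuity of $a_2$ in $\|\cdot\|_D$ (Lemma~\ref{lem:a3 continuous}), and cancel the test function via $\|(\hat{\boldsymbol{w}},\hat{r})\|_D\le\sqrt{1+\beta_1}\,\beta_1^{-1/2}\|(\hat{\boldsymbol{w}},\hat{r})\|_V$, which leads to
\[
\frac{\mu^2}{(1+\mu)^2}\|(\hat{\boldsymbol{e}}_u,\hat{e}_p)\|_V\le\mathfrak{C}_2\,\frac{\sqrt{1+\beta_1}}{\sqrt{\beta_1}}\,\|(\tilde{\boldsymbol{e}}_u,\tilde{e}_p)\|_D .
\]
The left-hand inequality is symmetric: apply the inf--sup estimate of Lemma~\ref{lem:a3 inf sup} for $a_2$, rewrite $a_2((\tilde{\boldsymbol{e}}_u,\tilde{e}_p),(\hat{\boldsymbol{w}},\hat{r}))=a_1((\hat{\boldsymbol{e}}_u,\hat{e}_p),(\hat{\boldsymbol{w}},\hat{r}))$, use the continuity (\ref{eqn:B continous}) of $a_1$ in $\|\cdot\|_V$, and convert via $\|(\hat{\boldsymbol{w}},\hat{r})\|_V\le\sqrt{\beta_2+1}\,\|(\hat{\boldsymbol{w}},\hat{r})\|_D$, which yields
\[
\frac{(\mu\beta_1)^2}{(1+\mu\beta_1)^2}\|(\tilde{\boldsymbol{e}}_u,\tilde{e}_p)\|_D\le\mathfrak{C}_1\sqrt{\beta_2+1}\,\|(\hat{\boldsymbol{e}}_u,\hat{e}_p)\|_V .
\]

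All the quantitative content (the two inf--sup constants, the two continuity constants, and the norm equivalence) is already in Lemmas~\ref{lem:A and Du}, \ref{lem:a3 continuous}, \ref{lem:a3 inf sup}, \ref{lem:inf-sup B} and (\ref{eqn:B continous}), so I expect no genuine obstacle; the only delicate point is the bookkeeping, namely pairing the inf--sup constant of $a_1$ with the continuity constant of $a_2$ (and vice versa) and applying the two norm conversions in the correct directions, so that the constants come out as $\mathfrak{C}_2\sqrt{1+\beta_1}(1+\mu)^2/(\sqrt{\beta_1}\mu^2)$ and $(\mu\beta_1)^2/\big(\mathfrak{C}_1(1+\mu\beta_1)^2\sqrt{\beta_2+1}\big)$, respectively.
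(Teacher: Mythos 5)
Your argument is correct and mirrors the paper's proof exactly: both directions start from the identity $a_1((\hat{\boldsymbol{e}}_u,\hat{e}_p),\cdot)=a_2((\tilde{\boldsymbol{e}}_u,\tilde{e}_p),\cdot)$ on $W_{k+d}$, pair the inf--sup bound for one bilinear form with the continuity bound for the other, and close with the $\|\cdot\|_V$-versus-$\|\cdot\|_D$ conversion furnished by Lemma~\ref{lem:A and Du}, producing precisely the stated constants. The only cosmetic difference is that you isolate the two norm-comparison inequalities as a preliminary step, whereas the paper performs the conversion inline inside the supremum.
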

\begin{proof}
It follows from (\ref{eqn:error problem}) and (\ref{eqn:error
in com space easily}) that

\vspace{-10pt}
\begin{align}\label{eqn:bh=ah}
a_2((\tilde{\boldsymbol{e}}_{u},\tilde{e}_{p}),(\hat{\boldsymbol{v}},\hat{q}))
=a_1((\hat{\boldsymbol{e}}_{u},\hat{e}_{ph}),(\hat{\boldsymbol{v}},\hat{q})), \quad
\forall~(\hat{\boldsymbol{v}},\hat{q})\in W_{k+d}.
\end{align}
Using (\ref{eqn:bh=ah}), Lemma~\ref{lem:A and Du}, and Lemma
\ref{lem:a3 inf sup}, we obtain

\vspace{-10pt}
\begin{align*}
\frac{(\mu\beta_1)^2}{(1+\mu\beta_1)^2}\|(\tilde{\boldsymbol{e}}_u,\tilde{e}_p)\|_D&
\leq\sup_{(\hat{\boldsymbol{v}},\hat{q})\in W_{k+d}\backslash\{0\}}\frac{a_2((\tilde{\boldsymbol{e}}_u,\tilde{e}_p),
(\hat{\boldsymbol{v}},\hat{q}))}{\|(\hat{\boldsymbol{v}},\hat{q})\|_D}\\
&=\sup_{(\hat{\boldsymbol{v}},\hat{q})\in W_{k+d}\backslash\{0\}}\frac{a_{1}((\hat{\boldsymbol{e}}_u,\hat{e}_p),(\hat{\boldsymbol{v}},\hat{q}))}
{\|(\hat{\boldsymbol{v}},\hat{q})\|_D}\\
&\leq\sup_{(\hat{\boldsymbol{v}},\hat{q})\in W_{k+d}\backslash\{0\}}\frac{\mathfrak{C}_1\|(\hat{\boldsymbol{e}}_u,\hat{e}_p)\|_V\|(\hat{\boldsymbol{v}},\hat{q})\|_V}{\|(\hat{\boldsymbol{v}},\hat{q})\|_D}\\
&\leq\sup_{(\hat{\boldsymbol{v}},\hat{q})\in W_{k+d}\backslash\{0\}}\frac{\mathfrak{C}_1\|(\hat{\boldsymbol{e}}_u,\hat{e}_p)\|_V\sqrt{|\hat{\boldsymbol{v}}|_{1,\Omega}^2+\|\hat{q}\|^2}}
{\sqrt{|\hat{\boldsymbol{v}}|_D^2+\|\hat{q}\|^2}}\\
&\leq\sup_{(\hat{\boldsymbol{v}},\hat{q})\in W_{k+d}\backslash\{0\}}\frac{\mathfrak{C}_1\|(\hat{\boldsymbol{e}}_u,\hat{e}_p)\|_V\sqrt{\beta_2|\hat{\boldsymbol{v}}|_D^2+\|\hat{q}\|^2}}
{\sqrt{|\hat{\boldsymbol{v}}|_D^2+\|\hat{q}\|^2}}\\
&\leq\mathfrak{C}_1\sqrt{\beta_2+1}\|(\hat{\boldsymbol{e}}_u,\hat{e}_p)\|_V,
\end{align*}
which implies the first inequality in (\ref{eqn:two error
estimations in com space}).

Similarly, using (\ref{eqn:bh=ah}) and Lemma \ref{lem:inf-sup in com space}, we have

\vspace{-10pt}
\begin{align*}
\frac{\mu^2}{(1+\mu)^2}\|(\hat{\boldsymbol{e}}_u,\hat{e}_p)\|_V&
\leq\sup_{(\hat{\boldsymbol{v}},\hat{q})\in W_{k+d}\backslash\{0\}}\frac{a_{1}((\hat{\boldsymbol{e}}_u,\hat{e}_p),
(\hat{\boldsymbol{v}},\hat{q}))}{\|(\hat{\boldsymbol{v}},\hat{q})\|_V}\\
&=\sup_{(\hat{\boldsymbol{v}},\hat{q})\in W_{k+d}\backslash\{0\}}\frac{a_2((\tilde{\boldsymbol{e}}_u,\tilde{e}_p),(\hat{\boldsymbol{v}},\hat{q}))}
{\|(\hat{\boldsymbol{v}},\hat{q})\|_V}\\
&\leq\sup_{(\hat{\boldsymbol{v}},\hat{q})\in W_{k+d}\backslash\{0\}}\frac{\mathfrak{C}_2\|(\tilde{\boldsymbol{e}}_u,\tilde{e}_p)\|_D\|(\hat{\boldsymbol{v}},\hat{q})\|_D}
{\|(\hat{\boldsymbol{v}},\hat{q})\|_V}\\
&\leq\sup_{(\hat{\boldsymbol{v}},\hat{q})\in W_{k+d}\backslash\{0\}}\frac{\mathfrak{C}_2\|(\tilde{\boldsymbol{e}}_u,\tilde{e}_p)\|_D
\sqrt{|\hat{\boldsymbol{v}}|_D^2+\|\hat{q}\|^2}}
{\sqrt{|\hat{\boldsymbol{v}}|_{1,\Omega}^2+\|\hat{q}\|^2}}\\
&\leq\sup_{(\hat{\boldsymbol{v}},\hat{q})\in W_{k+d}\backslash\{0\}}\frac{\mathfrak{C}_2\|(\tilde{\boldsymbol{e}}_u,\tilde{e}_p)\|_D
\sqrt{\frac{|\hat{\boldsymbol{v}}|_{1,\Omega}^2}{\beta_1}+\|\hat{q}\|^2}}
{\sqrt{|\hat{\boldsymbol{v}}|_{1,\Omega}^2+\|\hat{q}\|^2}}\\
&\leq\frac{\mathfrak{C}_2\sqrt{1+\beta_1}}{\sqrt{\beta_1}}\|(\tilde{\boldsymbol{e}}_u,\tilde{e}_p)\|_D,
\end{align*}
which implies the second inequality in (\ref{eqn:two error
estimations in com space}).
\end{proof}

Combining Theorem \ref{thm:equivalence hat e} and Lemma
\ref{lem:e_h estimate e_h hat}, we obtain the following lower
and upper bounds related to
$\|(\tilde{\boldsymbol{e}}_u,\tilde{e}_p)\|_D$.
\begin{theorem}\label{theo:diag velocity approx error}
Let $(\boldsymbol{u},p)$, $(\hat{\boldsymbol{u}},\hat{p})$, and
$(\tilde{\boldsymbol{e}}_u,\tilde{e}_p)$ be the solutions
of (\ref{eqn:variational form}),(\ref{eqn:error problem}), and (\ref{eqn:error in com space
easily}), respectively. There are constants $\tilde{\mathfrak{C}}_*=\frac{\mu^4\beta_1^2}
{2\mathfrak{C}_1^2(1+\mu)^2(1+\beta_1\mu)^2\sqrt{1+\beta_2}}$ and
$\tilde{\mathfrak{C}}^*=\frac{\mathfrak{C}_1\mathfrak{C}_2\sqrt{1+\beta_1}(1+\mu)^2}
{\mathfrak{c}_1\sqrt{\beta_1}\mu^2}$ such that

\vspace{-10pt}
\begin{align}
\tilde{\mathfrak{C}}_*\|(\tilde{\boldsymbol{e}}_{u},\tilde{e}_p)\|_D
+\frac{1}{2\sqrt{d}}\|\nabla\cdot\hat{\boldsymbol{u}}\|
\leq\|(\boldsymbol{u}-\hat{\boldsymbol{u}},p-\hat{p})\|_V\leq \tilde{\mathfrak{C}}^*\|(\tilde{\boldsymbol{e}}_{u},\tilde{e}_p)\|_D
+\frac{1}{\mathfrak{c}_1}\|\nabla\cdot\hat{\boldsymbol{u}}\|
+\frac{C_{\mathcal{T}}}{\mathfrak{c}_1}osc(\boldsymbol{f}),
\end{align}
where $\|\cdot\|_V$ and $\|\cdot\|_D$ are defined in (\ref{eqn:norm
V}) and (\ref{eqn:norm associate with a3}),
respectively. The constants $\mathfrak{C}_1, \mathfrak{C}_2, \mathfrak{c}_1, \mu, \beta_1,\beta_2$, and $C_{\mathcal{T}}$ are defined in  (\ref{eqn:B continous}), (\ref{eqn:Cauchy-Schwarz inequality for a3}) (\ref{eqn:a1 orign inf-sup}), (\ref{eqn:inf-sup in com space}), (\ref{eqn:equivalence relation in T}), and Lemma~\ref{lem:quasi-interpolant v}, respectively.
\end{theorem}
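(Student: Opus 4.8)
The plan is to obtain the estimate by composing the two two-sided bounds already proved: Theorem~\ref{thm:equivalence hat e} relates $\|(\boldsymbol{u}-\hat{\boldsymbol{u}},p-\hat{p})\|_V$ to $\|(\hat{\boldsymbol{e}}_u,\hat{e}_p)\|_V$, and Lemma~\ref{lem:e_h estimate e_h hat} relates $\|(\hat{\boldsymbol{e}}_u,\hat{e}_p)\|_V$ to $\|(\tilde{\boldsymbol{e}}_u,\tilde{e}_p)\|_D$. No new analytic ingredient is required: the entire argument is the chaining of these inequalities together with the bookkeeping of the resulting constants, all of which are independent of $h$, so the final estimate is uniform in the mesh.

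For the lower bound I would start from the left half of (\ref{eqn:equivalence hat e}),
\[
\|(\boldsymbol{u}-\hat{\boldsymbol{u}},p-\hat{p})\|_V \ge \hat{\mathfrak{C}}_*\,\|(\hat{\boldsymbol{e}}_u,\hat{e}_p)\|_V+\frac{1}{2\sqrt{d}}\|\nabla\cdot\hat{\boldsymbol{u}}\|,
\]
and then substitute the left inequality of (\ref{eqn:two error estimations in com space}) for $\|(\hat{\boldsymbol{e}}_u,\hat{e}_p)\|_V$. Since $\hat{\mathfrak{C}}_*=\tfrac{\mu^2}{2\mathfrak{C}_1(1+\mu)^2}$, the product of $\hat{\mathfrak{C}}_*$ with the lower constant $\tfrac{(\mu\beta_1)^2}{\mathfrak{C}_1(1+\mu\beta_1)^2\sqrt{\beta_2+1}}$ of Lemma~\ref{lem:e_h estimate e_h hat} collapses exactly to $\tilde{\mathfrak{C}}_*$, while the term $\tfrac{1}{2\sqrt{d}}\|\nabla\cdot\hat{\boldsymbol{u}}\|$ is carried over unchanged. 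This yields the left half of the asserted estimate.

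For the upper bound I would start from the right half of (\ref{eqn:equivalence hat e}),
\[
\|(\boldsymbol{u}-\hat{\boldsymbol{u}},p-\hat{p})\|_V \le \hat{\mathfrak{C}}^*\,\|(\hat{\boldsymbol{e}}_u,\hat{e}_p)\|_V+\frac{1}{\mathfrak{c}_1}\|\nabla\cdot\hat{\boldsymbol{u}}\|+\frac{C_{\mathcal{T}}}{\mathfrak{c}_1}osc(\boldsymbol{f}),
\]
and insert the right inequality of (\ref{eqn:two error estimations in com space}) for $\|(\hat{\boldsymbol{e}}_u,\hat{e}_p)\|_V$. With $\hat{\mathfrak{C}}^*=\tfrac{\mathfrak{C}_1}{\mathfrak{c}_1}$, the product of $\hat{\mathfrak{C}}^*$ with the upper constant $\tfrac{\mathfrak{C}_2\sqrt{1+\beta_1}(1+\mu)^2}{\sqrt{\beta_1}\mu^2}$ of Lemma~\ref{lem:e_h estimate e_h hat} is precisely $\tilde{\mathfrak{C}}^*$, and the divergence and oscillation terms pass through verbatim.

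The only point requiring care — and the sole thing resembling an obstacle — is checking that the two results are stated in compatible norms, namely that Lemma~\ref{lem:e_h estimate e_h hat} controls the quantity $\|(\hat{\boldsymbol{e}}_u,\hat{e}_p)\|_V$ (the same $V$-norm that appears on the right in Theorem~\ref{thm:equivalence hat e}) by $\|(\tilde{\boldsymbol{e}}_u,\tilde{e}_p)\|_D$ from both sides. Because this is indeed the case, the two chains compose with no loss and without any reconciliation of norms, and in particular the resulting estimate inherits from Theorem~\ref{thm:equivalence hat e} the feature that no saturation assumption is invoked.
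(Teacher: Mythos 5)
Your proof is correct and matches the paper's own approach: the paper states Theorem~\ref{theo:diag velocity approx error} as a direct consequence of combining Theorem~\ref{thm:equivalence hat e} and Lemma~\ref{lem:e_h estimate e_h hat}, which is precisely your chaining argument. The constant bookkeeping checks out: $\hat{\mathfrak{C}}_*\cdot\frac{(\mu\beta_1)^2}{\mathfrak{C}_1(1+\mu\beta_1)^2\sqrt{\beta_2+1}}=\tilde{\mathfrak{C}}_*$ and $\hat{\mathfrak{C}}^*\cdot\frac{\mathfrak{C}_2\sqrt{1+\beta_1}(1+\mu)^2}{\sqrt{\beta_1}\mu^2}=\tilde{\mathfrak{C}}^*$, and the divergence and oscillation terms pass through unchanged, exactly as you observe.
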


\subsection{Diagonalization with respect to Pressure}
Recall that $\{\varphi_j\}_{j=1}^{N_{v}}$ and
$\{\psi_j\}_{j=1}^{N_{p}}$ are the bases in space $W_{k+d}$ for
velocity and pressure, respectively. For
$\tilde{\boldsymbol{e}}_u=\sum_{j=1}^{N_v}\tilde{x}_{u,j}\varphi_j$
and $\tilde{e}_p=\sum_{j=1}^{N_p}\tilde{x}_{p,j}\psi_j$,
rewrite (\ref{eqn:error in com space easily}) in a matrix form

\vspace{-10pt}
\begin{align}\label{eqn:matrix form of a3}
\begin{bmatrix}
D_v & B  \\
-B^T & 0
\end{bmatrix}
\begin{bmatrix}
\tilde{\boldsymbol{x}}_u  \\
\tilde{\boldsymbol{x}}_p
\end{bmatrix}=
\begin{bmatrix}
F_v   \\
F_p
\end{bmatrix},
\end{align}
where

\vspace{-10pt}
\begin{align*}
&\tilde{\boldsymbol{x}}_u=(\tilde{x}_{u,1},\tilde{x}_{u,2},\cdots,\tilde{x}_{u,N_v})^T,
&F_v=(F_{v,1},F_{v,2},\cdots,F_{v,N_v})^T,\\
&\tilde{\boldsymbol{x}}_p=(\tilde{x}_{p,1},\tilde{x}_{p,2},\cdots,\tilde{x}_{p,N_p})^T,
&F_p=(F_{p,1},F_{p,2},\cdots,F_{p,N_v})^T.
\end{align*}
Here, $F_{v,j}$ and $F_{p,j}$ are defined by

\vspace{-10pt}
\begin{align}
\label{eqn:Fvj}
F_{v,j}&=f(\varphi_j)-a_1((\hat{\boldsymbol{u}},\hat{p}),(\varphi_j,0)), \quad j=1,2,\cdots,N_v,\\
\label{eqn:Fpj}
F_{p,j}&=-a_1((\hat{\boldsymbol{u}},\hat{p}),(0,\psi_j)), \quad j=1,2,\cdots,N_p.
\end{align}
After a simple calculation, we have

\vspace{-10pt}
\begin{align}
\label{eqn:xu}
&D_v\tilde{\boldsymbol{x}}_u+B\tilde{\boldsymbol{x}}_p=F_v,\\
\label{eqn:xp}
&B^TD_v^{-1}B\tilde{\boldsymbol{x}}_p=F_p+B^TD_v^{-1}F_v.
\end{align}

The inverse of the matrix $D_v$ is easy to calculate because it is a
diagonal matrix. If we get $\tilde{\boldsymbol{x}}_p$ by solving
(\ref{eqn:xp}), $\tilde{\boldsymbol{x}}_u$ is easy to get by
(\ref{eqn:xu}). Let $D_p=diag(B^TD_v^{-1}B)$, which is the diagonal
matrix with the same diagonal as $B^TD_v^{-1}B$. Let
$c_s=\max\limits_{T\in \mathcal{T}}N_{p,T}$, which is the maximum
number of basis functions of pressure for each element. Then replacing
$B^TD_v^{-1}B$ with $c_sD_p$ in (\ref{eqn:xp}), we get

\vspace{-10pt}
\begin{align}
\label{eqn:xu hat}
&D_v\bar{\boldsymbol{x}}_u+B\bar{\boldsymbol{x}}_p=F_v,\\
\label{eqn:xp hat} &c_sD_p\bar{\boldsymbol{x}}_p=F_p+B^TD_v^{-1}F_v,
\end{align}
where
$\bar{\boldsymbol{x}}_u=(\bar{x}_{u,1},\bar{x}_{u,2},\cdots,\bar{x}_{u,N_v})$
and
$\bar{\boldsymbol{x}}_p=(\bar{x}_{p,1},\bar{x}_{p,2},\cdots,\bar{x}_{p,N_p})$.

Equations (\ref{eqn:xu hat}) and (\ref{eqn:xp hat}) are equivalent to

\vspace{-10pt}
\begin{align*}
&D_v\bar{\boldsymbol{x}}_u+B\bar{\boldsymbol{x}}_p=F_v,\\
&-B^T\bar{\boldsymbol{x}}_u+(c_sD_p-B^TD_v^{-1}B)\bar{\boldsymbol{x}}_p=F_p,
\end{align*}
whose matrix form is

\vspace{-10pt}
\begin{align*}
\begin{bmatrix}
D_v & B  \\
-B^T & c_sD_p-B^TD_v^{-1}B
\end{bmatrix}
\begin{bmatrix}
\bar{\boldsymbol{x}}_u  \\
\bar{\boldsymbol{x}}_p
\end{bmatrix}=
\begin{bmatrix}
F_v   \\
F_p
\end{bmatrix}.
\end{align*}

For any $(\hat{\boldsymbol{v}},\hat{q})=\sum\limits_{j=1}^Nx_j\phi_j$ and
$(\hat{\boldsymbol{w}},\hat{r})=\sum\limits_{j=1}^Ny_j\phi_j\in W_{k+d}$, we define

\vspace{-10pt}
\begin{align}\label{eqn:a4 bilinear}
a_3((\hat{\boldsymbol{v}},\hat{q}),(\hat{\boldsymbol{w}},\hat{r}))=\boldsymbol{y}^TM_{vp}\boldsymbol{x},
\end{align}
where

\vspace{-10pt}
\begin{align*}
\boldsymbol{y}=(y_1,\cdots,y_N)^{T}, \quad M_{vp}=
\begin{bmatrix}
D_v & B  \\
-B^T & c_sD_p-B^TD_v^{-1}B
\end{bmatrix}, \quad \mbox{and}  \quad
\boldsymbol{x}=(x_1,\cdots,x_N)^{T}.
\end{align*}

It is time to present \textbf{the third error problem}: Find $\{\bar{\boldsymbol{e}}_u,\bar{e}_p\}\in
W_{k+d}$ with
$\bar{\boldsymbol{e}}_u=\sum_{j=1}^{N_v}\bar{x}_{u,j}\varphi_j$
and $\bar{e}_p=\sum_{j=1}^{N_p}\bar{x}_{p,j}\psi_j$ such
that

\vspace{-10pt}
\begin{align}\label{eqn:the third error problem}
a_3((\bar{\boldsymbol{e}}_u,\bar{e}_p),(\hat{\boldsymbol{v}},\hat{q}))
=f(\hat{\boldsymbol{v}})-a_1((\hat{\boldsymbol{u}},\hat{p}),(\hat{\boldsymbol{v}},\hat{q})), \quad
\forall~(\hat{\boldsymbol{v}},\hat{q})\in W_{k+d}.
\end{align}

\begin{remark}
Equations (\ref{eqn:xu hat}) and (\ref{eqn:xp hat}) are equivalent to
(\ref{eqn:the third error problem}), but they are used in different
ways. Obviously, (\ref{eqn:xu hat}) and (\ref{eqn:xp hat}) are
easier to calculate. In section \ref{sec:Adaptive finite element method}, the global and local estimators will be
generated from (\ref{eqn:xu hat}) and (\ref{eqn:xp hat}). However,
(\ref{eqn:the third error problem}) is essential in the proof of
equivalence. Therefore, we use (\ref{eqn:xu hat}) and (\ref{eqn:xp
hat}) for the numerical computation and (\ref{eqn:the third error problem}) for the theoretical analysis.
\end{remark}

Because matrix $D_v$ and $D_p$ are diagonal matrices in (\ref{eqn:xu
hat}) and (\ref{eqn:xp hat}), the existence and uniqueness of finite element scheme (\ref{eqn:the third error problem})
are obvious.
\begin{theorem}
The finite element scheme (\ref{eqn:the third error problem}) has a unique solution.
\end{theorem}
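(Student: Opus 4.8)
The plan is to reduce the well-posedness of (\ref{eqn:the third error problem}) to the nonsingularity of the matrix $M_{vp}$ and, via the equivalent system (\ref{eqn:xu hat})--(\ref{eqn:xp hat}), to the invertibility of the two diagonal matrices $D_v$ and $c_sD_p$. Since $M_{vp}$ is a square matrix, ``has a unique solution'' is equivalent to $\det M_{vp}\neq 0$, so no inf-sup theory for $a_3(\cdot,\cdot)$ is really needed here; a positivity check on the diagonal blocks suffices.

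First I would observe that $D_v$ is symmetric positive definite: its $j$-th diagonal entry equals $A_{jj}=a(\varphi_j,\varphi_j)=|\varphi_j|_{1,\Omega}^2$, which is strictly positive because $\varphi_j$ is a nonzero element of $WV_{k+d+1}\subset[H_0^1(\Omega)]^d$ and Poincar\'e's inequality forces $|\varphi_j|_{1,\Omega}>0$. Hence $D_v^{-1}$ exists and is diagonal with positive entries, so $B^TD_v^{-1}B$ is well defined, symmetric, positive semidefinite, with diagonal entries $(B^TD_v^{-1}B)_{jj}=\sum_{\ell=1}^{N_v}(D_v)_{\ell\ell}^{-1}B_{\ell j}^2\geq 0$. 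The crux is to upgrade this to strict positivity, i.e. to show $D_p=\mathrm{diag}(B^TD_v^{-1}B)$ is positive definite: if $(B^TD_v^{-1}B)_{jj}=0$ for some $j$, then the $j$-th column of $B$ vanishes, so $b(\hat{\boldsymbol{v}},\psi_j)=0$ for every $\hat{\boldsymbol{v}}\in WV_{k+d+1}$; since $\psi_j$ is a nonzero element of $WP_{k+d}$, this contradicts the inf-sup condition (\ref{eqn:inf-sup in com space}) of Lemma~\ref{lem:inf-sup in com space}. Thus each diagonal entry of $D_p$ is positive and, because $c_s=\max_{T\in\mathcal{T}}N_{p,T}\geq 1$, the matrix $c_sD_p$ is symmetric positive definite and invertible. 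This step --- tying positivity of the reduced pressure block to the inf-sup stability of the auxiliary spaces --- is the only nonroutine ingredient; everything else is linear algebra.

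Finally I would assemble the conclusion. By (\ref{eqn:xp hat}) the pressure coefficients are uniquely determined, $\bar{\boldsymbol{x}}_p=(c_sD_p)^{-1}(F_p+B^TD_v^{-1}F_v)$, and then (\ref{eqn:xu hat}) gives $\bar{\boldsymbol{x}}_u=D_v^{-1}(F_v-B\bar{\boldsymbol{x}}_p)$ uniquely. Equivalently, the block elimination
\[
M_{vp}=\begin{bmatrix} I & 0\\ -B^TD_v^{-1} & I\end{bmatrix}\begin{bmatrix} D_v & B\\ 0 & c_sD_p\end{bmatrix}
\]
yields $\det M_{vp}=\det(D_v)\,\det(c_sD_p)\neq 0$, so $M_{vp}$ is nonsingular and (\ref{eqn:the third error problem}) has a unique solution $(\bar{\boldsymbol{e}}_u,\bar{e}_p)\in W_{k+d}$ for every right-hand side. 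Alternatively one could mimic the treatment of (\ref{eqn:error in com space easily}) by proving continuity and an inf-sup bound for $a_3(\cdot,\cdot)$ in the norm $\|\cdot\|_D$ and then arguing as in Theorem~\ref{thm:unique of error problem}, but the direct determinant computation above is the shortest route.
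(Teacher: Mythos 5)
Your proof is correct and follows the same basic strategy as the paper: reduce well-posedness to the nonsingularity of the diagonal blocks in the equivalent triangular system (\ref{eqn:xu hat})--(\ref{eqn:xp hat}). The paper disposes of this with a one-sentence remark (``Because matrix $D_v$ and $D_p$ are diagonal matrices\dots the existence and uniqueness\dots are obvious'') stated just before the theorem, without verifying that the diagonal entries are actually nonzero. You fill in precisely the nontrivial part that the paper leaves implicit: $D_v$ has positive diagonal since $a(\varphi_j,\varphi_j)=|\varphi_j|_{1,\Omega}^2>0$, and each diagonal entry $(B^TD_v^{-1}B)_{jj}=\sum_\ell (D_v)_{\ell\ell}^{-1}B_{\ell j}^2$ must be strictly positive, because if it vanished then the $j$-th column of $B$ would be zero, i.e. $b(\hat{\boldsymbol{v}},\psi_j)=0$ for every $\hat{\boldsymbol{v}}\in WV_{k+d+1}$, contradicting the inf-sup condition (\ref{eqn:inf-sup in com space}) for the nonzero $\psi_j\in WP_{k+d}$. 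That appeal to Lemma~\ref{lem:inf-sup in com space} is genuinely necessary (a diagonal matrix is not automatically invertible), and your block-triangular factorization $M_{vp}=\bigl[\begin{smallmatrix}I&0\\ -B^TD_v^{-1}&I\end{smallmatrix}\bigr]\bigl[\begin{smallmatrix}D_v&B\\ 0&c_sD_p\end{smallmatrix}\bigr]$ checks out, giving $\det M_{vp}=\det(D_v)\det(c_sD_p)\neq 0$. So the argument is a rigorous version of what the paper asserts; the alternative you mention (deriving it from Lemmas on continuity and inf-sup of $a_3$, analogous to Theorem~\ref{thm:unique of error problem}) is also viable, but the paper does not use it here --- in fact Lemma~\ref{lem:a4 inf sup} is only stated after this theorem.
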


Next, we turn our attention to the discrete pressure space. Two new norms will be defined.
We still use $\{\psi_j\}_{j=1}^{N_{p}}$ to denote the basis functions of pressure in $W_{k+d}$. For $\hat{q}=\sum_{j=1}^{N_p}x_p^j\psi_j$
and $\hat{r}=\sum_{j=1}^{N_p}y_p^j\psi_j$, define two bilinear forms

\vspace{-10pt}
\begin{align*}
E_{31}(\hat{q},\hat{r})=\boldsymbol{y}_p^TB^TD_v^{-1}B\boldsymbol{x}_p, \quad
E_{32}(\hat{q},\hat{r})=\boldsymbol{y}_p^TD_p\boldsymbol{x}_p,
\end{align*}
and norms

\vspace{-10pt}
\begin{align*}
\|\hat{q}\|_B^2=\sqrt{E_{31}(\hat{q},\hat{q})},\quad \|\hat{q}\|_P^2=\sqrt{E_{32}(\hat{q},\hat{q})},
\end{align*}
where $\boldsymbol{x}_p=(x_{p,1},\cdots,x_{p,N_p})^{T}$ and
$\boldsymbol{y}_p=(y_{p,1},\cdots,y_{p,N_p})^{T}$.
The next two lemmas will establish some inequalities related to the three pressure norms $\|\cdot\|_B, \|\cdot\|_P$, and $\|\cdot\|$.

\begin{lemma}\label{lem:B and P}
There exist two positive constants $c_i$ and $c_s$ independent of
$h$ such that

\vspace{-10pt}
\begin{align}\label{eqn:equivalence relation B and P}
c_i\leq \frac{\|\hat{q}\|_B^2}{\|\hat{q}\|_P^2}\leq c_s,\quad \forall \hat{q} \in WP_{k+d},
\end{align}
where $c_s$ is the same as in (\ref{eqn:xp hat}).
\end{lemma}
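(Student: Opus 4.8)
The statement claims that the two pressure quadratic forms $E_{31}(\hat q,\hat q) = \boldsymbol{x}_p^T B^T D_v^{-1} B \boldsymbol{x}_p$ and $E_{32}(\hat q,\hat q) = \boldsymbol{x}_p^T D_p \boldsymbol{x}_p$ are spectrally equivalent with constants independent of $h$. Since $D_p = \mathrm{diag}(B^T D_v^{-1} B)$ by definition, this is precisely the assertion that the symmetric positive (semi)definite matrix $S := B^T D_v^{-1} B$ is spectrally equivalent to its own diagonal, uniformly in the mesh. The plan is to reduce the global equivalence to a local, element-by-element statement and then invoke the shape-regularity / finite-overlap structure of the triangulation.

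First I would localize. The matrix $S = B^T D_v^{-1} B$ decouples in a controlled way: $B$ carries the entries $-b(\psi_j,\varphi_\ell) = -\int_\Omega \psi_j \nabla\cdot\varphi_\ell$, and both the pressure bubbles $\psi_j \in WP_{k+d}$ and the velocity bubbles $\varphi_\ell \in WV_{k+d+1}$ have support confined to a single element (they vanish on element boundaries), while $D_v$ is diagonal. Hence $S = \sum_{T\in\mathcal T} S_T$ where $S_T$ is assembled only from the degrees of freedom interior to $T$, and likewise $D_p = \sum_T \mathrm{diag}(S_T)$. So it suffices to prove, for each $T$, that $c_i^T\, \mathrm{diag}(S_T) \le S_T \le c_s^T\, \mathrm{diag}(S_T)$ in the sense of quadratic forms, with $c_i^T, c_s^T$ depending only on $d$, the polynomial degree $k$, and the shape-regularity constant of $T$, and then set $c_i = \min_T c_i^T$, $c_s = \max_T c_s^T$. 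The upper bound with $c_s^T = N_{p,T}$ (the number of pressure bubbles in $T$) is the elementary fact that a symmetric PSD matrix is bounded above by its diagonal times its dimension — this is exactly where the definition $c_s = \max_T N_{p,T}$ from \eqref{eqn:xp hat} comes in, so I would present that first and note the constant matches. The lower bound requires showing $\mathrm{diag}(S_T)$ controls $S_T$, i.e. that $S_T$ is, up to a constant, as well-conditioned as its diagonal.

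For the lower bound I would pass to the reference element via the affine scaling $\tilde T = h_T^{-1} T$ already set up in the proof of Lemma~\ref{lem:quasi-interpolant v}: under this scaling the bilinear forms $a(\cdot,\cdot)$, $b(\cdot,\cdot)$, and the $L^2$ norm all transform by explicit powers of $h_T$, so $S_T$ and $D_v$ (restricted to $T$) each pick up a uniform scaling factor and the ratio $\boldsymbol{x}_p^T S_T \boldsymbol{x}_p / \boldsymbol{x}_p^T \mathrm{diag}(S_T) \boldsymbol{x}_p$ is scale-invariant. On the fixed reference element the matrix $S_{\tilde T}$ is a fixed matrix depending only on $d$ and $k$; it is positive definite because $\mathrm{diag}(S_T)$ has strictly positive entries (each $\psi_j$ is not $\nabla$-orthogonal to the corresponding scaled $\varphi_j$ — more carefully, one uses that $B^T$ has full column rank on $WP_{k+d}(T)$, which is a restatement of the element-local inf-sup property underlying Lemma~\ref{lem:inf-sup in com space}). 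Hence the generalized eigenvalues of $(S_{\tilde T}, \mathrm{diag}(S_{\tilde T}))$ are bounded below by a positive constant $c_i^{\tilde T}$, and by a compactness/continuity argument over the (finitely parametrized, shape-regular) family of admissible element geometries this bound is uniform. Finally, summing the local estimates over $T$ and using that the supports of the pressure bubbles attached to distinct elements are disjoint yields the global bounds with $c_i, c_s$ independent of $h$, and with $c_s$ equal to the constant named in the statement.

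The main obstacle is the lower bound: one must verify that $\mathrm{diag}(S_T)$ does not degenerate relative to $S_T$, which amounts to a genuine (element-local) inf-sup / full-rank statement for $B^T$ restricted to the pressure bubble space, rather than a soft norm-equivalence. The cleanest route is to observe that $S_T = B_T^T D_{v,T}^{-1} B_T$ is the exact Schur complement of the local diagonalized saddle-point block, so $\boldsymbol{x}_p^T S_T \boldsymbol{x}_p = \sup_{\boldsymbol{x}_v}\big(2\,\boldsymbol{x}_v^T B_T \boldsymbol{x}_p - \boldsymbol{x}_v^T D_{v,T}\boldsymbol{x}_v\big)$, and its positivity is equivalent to $B_T$ being injective on pressure modes; this injectivity is exactly the element-level content that makes the spaces $WV_{k+d+1}(T)$, $WP_{k+d}(T)$ compatible, i.e. the local version of Lemma~\ref{lem:inf-sup in com space} proved via Lemma~\ref{lem:inf-sup one layer in com space}. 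With that in hand, the uniformity over the shape-regular mesh is routine scaling, as above.
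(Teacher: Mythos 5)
Your proposal reaches the same two local estimates as the paper's proof --- a per-element spectral equivalence between the Schur-complement quadratic form and its diagonal, with upper constant $c_s^T = N_{p,T}$ --- and you correctly observe that $c_s = \max_T N_{p,T}$ matches the constant introduced just before~\eqref{eqn:xp hat}. The genuine difference is the route to the local lower bound. The paper follows exactly the template of Lemma~\ref{lem:A and Du}: it applies the strengthened Cauchy--Schwarz inequality of Eijkhout--Vassilevski (Theorem~1 of~\cite{Eijkhout1991}) directly to $E_{31}$ restricted to the bubbles of a single element, peeling off one index at a time, and obtains the explicit constant $c_{iT}=(1-\gamma_{p,T})^{N_{p,T}}$ without ever passing to a reference configuration. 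You instead scale to a reference element, observe that positive definiteness of $S_{\tilde T}$ is precisely the local full-rank/inf-sup property of $B_T$ on the pressure bubbles, and invoke compactness over the shape-regular family to bound the generalized eigenvalues of $(S_{\tilde T},\mathrm{diag}\,S_{\tilde T})$ below. Both are legitimate; your Schur-complement reformulation makes the role of the local inf-sup transparent, which the paper leaves implicit, but the paper's argument is the more economical one since it re-uses, nearly verbatim, the abstract mechanism already deployed for the velocity diagonalization and delivers an explicit constant rather than one obtained by compactness.

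There is one concrete inaccuracy in your setup. You justify the decoupling $S=\sum_T S_T$ by asserting that both the pressure bubbles $\psi_j$ and the velocity bubbles $\varphi_\ell$ vanish on element boundaries. The first is true, since $WP_{k+d}(T)\subset Q_{k+d}(T)$, but the second is not: $WV_{k+d+1}(T)=[R_{k+d+1}(T)\ominus R_{k+1}(T)]^d$ contains \emph{face} bubbles as well as element bubbles, and a face bubble attached to an interior face is supported on the two adjacent elements. As a result the diagonal entries of $D_v$ corresponding to face bubbles are assembled from two neighbouring simplices, and $S=B^TD_v^{-1}B$ couples pressure bubbles in adjacent elements through the shared velocity face degrees of freedom; $S$ is therefore not block-diagonal over elements. (The paper's own line ``noting $\|\hat q\|_B^2=\sum_T\|\hat q_T\|_B^2$'' glosses over the same point without comment.) The localization can still be salvaged because the pressure DOFs are element-local and the coupling is only to nearest neighbours, so a finite-overlap/assembly argument applies, but the reason you give for the block structure is false as stated and needs to be replaced by such an argument.
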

\begin{proof}
For any $T \in \mathcal{T}$, denote by
$\{\psi_{T,j}\}_{j=1}^{N_{p,T}}$ the basis functions of pressure related to $T$,
then $\hat{q}_T:=\hat{q}_{|T}=\sum\limits_{j=1}^{N_{p,T}}x_{T,j}\psi_{T,j}$ with
$\{x_{T,j}\}_{j=1}^{N_{p,T}}$ being the coefficients. Let
$\hat{q}_{T,j}:=x_{T,j}\psi_{T,j}$, then
$\hat{q}_T=\sum\limits_{j=1}^{N_{p,T}}\hat{q}_{T,j}$. We claim that there exist two
positive constants $c_{iT}$ and $c_{sT}$, independent of $h$,
such that

\vspace{-10pt}
\begin{align}\label{eqn:equivalence relation B and P in K}
c_{iT}\sum_{j=1}^{N_{p,T}}\|\hat{q}_{T,j}\|_B^2\leq \|\hat{q}_T\|_B^2\leq
c_{sT}\sum_{j=1}^{N_{p,T}}\|\hat{q}_{T,j}\|_B^2, \quad T\in \mathcal{T}.
\end{align}

For the first inequality in (\ref{eqn:equivalence relation B and P
in K}), devide $\Lambda=\{j\in N^{+}~\big|~1\leq j \leq N_{p,T}\}$ into two subsets
$\Lambda=\Lambda_1 \cup \Lambda_2$ with
$\Lambda_1\cap\Lambda_2=\emptyset$. From Theorem 1 in
\cite{Eijkhout1991}, it gets that

\vspace{-10pt}
\begin{align}\label{eqn:stren Cauchy inequality bar}
E_{31}(\sum_{j\in\Lambda_1}\hat{q}_{T,j},\sum_{\ell\in\Lambda_2}\hat{q}_{T,\ell}) \leq
\gamma_{p,T}\|\sum_{j\in\Lambda_1}\hat{q}_{T,j}\|_B\|\sum_{\ell\in\Lambda_2}\hat{q}_{T,\ell}\|_B,
\end{align}
where $0\leq \gamma_{p,T}<1$ is independent of $h$. Using the strengthened Cauchy inequality (\ref{eqn:stren Cauchy inequality bar}), we deduce

\vspace{-10pt}
\begin{align*}
\|\sum_{j=1}^{N_{p,T}}\hat{q}_{T,j}\|_B^2&=\|\hat{q}_T\|_B^2
=E_{31}(\sum_{j=1}^{N_{p,T}}\hat{q}_{T,j},\sum_{j=1}^{N_{p,T}}\hat{q}_{T,j})\\
&=\|\hat{q}_{T,1}\|_B^2+\|\sum_{j=2}^{N_{p,T}}\hat{q}_{T,j}\|_B^2
+2E_{31}(\hat{q}_{T,1},\sum_{j=2}^{N_{p,T}}\hat{q}_{T,j})\\
&\geq \|\hat{q}_{T,1}\|_B^2+\|\sum_{j=2}^{N_{p,T}}\hat{q}_{T,j}\|_B^2
-2\gamma_{p,T}\|\hat{q}_{T,1}\|_B\|
\sum_{j=2}^{N_{p,T}}\hat{q}_{T,j}\|_B\\
&\geq
(1-\gamma_{p,T})\|\hat{q}_{T,1}\|_B^2+(1-\gamma_{p,T})\|\sum_{j=2}^{N_{p,T}}\hat{q}_{T,j}\|_B^2.
\end{align*}
By a similar argument, we obtain

\vspace{-10pt}
\begin{align*}
\|\hat{q}_T\|_B^2=\|\sum_{j=1}^{N_{p,T}}\hat{q}_{T,j}\|_B^2\geq
\sum_{j=1}^{N_{p,T}}(1-\gamma_{p,T})^{j}\|\hat{q}_{T,j}\|_B^2 \geq
(1-\gamma_{p,T})^{N_{p,T}}\sum_{j=1}^{N_{p,T}}\|\hat{q}_{T,j}\|_B^2,
\end{align*}
which implies the first inequality in (\ref{eqn:equivalence relation B and P in K}) with $c_{iT}=(1-\gamma_{p,T})^{N_{p,T}}$.

The second inequality in (\ref{eqn:equivalence relation in T})
follows from the Cauchy-Schwarz inequality with $c_{sT}=N_{p,T}$.
Therefore, the claim (\ref{eqn:equivalence relation in T}) holds.
Summing up (\ref{eqn:equivalence relation in T}) over all $T\in
\mathcal{T}$ and noting

\vspace{-10pt}
\begin{align*}
\frac{\|\hat{q}\|_B^2}{\|\hat{q}\|_P^2}
=\frac{\sum\limits_{T\in\mathcal{T}}\|\hat{q}_T\|_B^2}
{\sum\limits_{T\in\mathcal{T}}\sum_{j=1}^{N_{p,T}}\|\hat{q}_{T,j}\|_B^2},
\end{align*}
we arrive at the conclusion (\ref{eqn:equivalence relation}) with
$c_{i}=\min\limits_{T \in \mathcal{T}}(1-\gamma_{p,T})^{N_{p,T}}$ and
$c_{s}=\max\limits_{T \in \mathcal{T}}N_{p,T}$.
\end{proof}

\begin{lemma}\label{lem:B and L2 norm}
For any $(\hat{\boldsymbol{v}},\hat{q})\in W_{k+d}$, we have

\vspace{-10pt}
\begin{align*}
\|\hat{q}\|_B\leq d(d+1)\beta_2\|\hat{q}\|
\end{align*}
where $d$ is the dimension and $\beta_2$ is defined in (\ref{eqn:equivalence relation}).
\end{lemma}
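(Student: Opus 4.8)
The plan is to recognize that the norm $\|\cdot\|_B$ is, up to the diagonal scaling $D_v$, a discrete dual norm of the divergence restricted to $WV_{k+d+1}$, and then to bound it by the continuous pairing. Writing the coefficient vector of $\hat q$ in the pressure basis $\{\psi_j\}$ as $\boldsymbol x_p$, we have $\|\hat q\|_B^2=E_{31}(\hat q,\hat q)=\boldsymbol x_p^{T}B^{T}D_v^{-1}B\boldsymbol x_p=\boldsymbol w^{T}D_v^{-1}\boldsymbol w$ with $\boldsymbol w:=B\boldsymbol x_p$. Since $D_v$ is the diagonal of the symmetric positive definite matrix $A$, whose diagonal entries are $a(\varphi_\ell,\varphi_\ell)=|\varphi_\ell|_{1,\Omega}^2>0$, it is itself symmetric positive definite and defines an inner product on $\mathbb R^{N_v}$; the Cauchy--Schwarz inequality in that inner product yields the identity
\begin{align*}
\boldsymbol w^{T}D_v^{-1}\boldsymbol w=\sup_{\boldsymbol y_v\neq 0}\frac{(\boldsymbol y_v^{T}\boldsymbol w)^2}{\boldsymbol y_v^{T}D_v\boldsymbol y_v}.
\end{align*}

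Next I would translate this back into variational form. Setting $\hat{\boldsymbol v}=\sum_j y_{v,j}\varphi_j\in WV_{k+d+1}$, one reads off $\boldsymbol y_v^{T}D_v\boldsymbol y_v=|\hat{\boldsymbol v}|_D^2$ from (\ref{eqn:norm associate with a3 about velocity}) and $\boldsymbol y_v^{T}\boldsymbol w=\boldsymbol y_v^{T}B\boldsymbol x_p=-b(\hat{\boldsymbol v},\hat q)$ from the definition of $B$; since $\{\varphi_j\}$ is a basis, the supremum over coefficient vectors equals the supremum over $WV_{k+d+1}$, so that
\begin{align*}
\|\hat q\|_B=\sup_{\hat{\boldsymbol v}\in WV_{k+d+1}\setminus\{0\}}\frac{|b(\hat{\boldsymbol v},\hat q)|}{|\hat{\boldsymbol v}|_D}.
\end{align*}

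Finally, for each $\hat{\boldsymbol v}\in WV_{k+d+1}$ I would estimate $|b(\hat{\boldsymbol v},\hat q)|=\big|\int_\Omega \hat q\,\nabla\cdot\hat{\boldsymbol v}\big|\le\|\hat q\|\,\|\nabla\cdot\hat{\boldsymbol v}\|\le\sqrt d\,\|\hat q\|\,|\hat{\boldsymbol v}|_{1,\Omega}$, by Cauchy--Schwarz and the elementary bound $\|\nabla\cdot\hat{\boldsymbol v}\|\le\sqrt d\,|\hat{\boldsymbol v}|_{1,\Omega}$ used elsewhere in the paper, and then apply Lemma~\ref{lem:A and Du} to get $|\hat{\boldsymbol v}|_{1,\Omega}\le\sqrt{\beta_2}\,|\hat{\boldsymbol v}|_D$. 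Dividing by $|\hat{\boldsymbol v}|_D$ and taking the supremum gives $\|\hat q\|_B\le\sqrt{d\beta_2}\,\|\hat q\|$; since $d\ge 2$ and $\beta_2=\max_{T\in\mathcal T}N_{v,T}\ge 1$ we have $\sqrt{d\beta_2}\le d(d+1)\beta_2$, which yields the claim (with room to spare).

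The only genuinely delicate point is the first reduction: the bookkeeping that matches the entries of $B$ and $D_v$ with the forms $b(\cdot,\cdot)$ and $a(\cdot,\cdot)$ and with $|\cdot|_D$, keeping the sign conventions of (\ref{eqn:define A}) straight, so that the discrete quantity $\|\hat q\|_B$ is correctly identified with $\sup|b(\hat{\boldsymbol v},\hat q)|/|\hat{\boldsymbol v}|_D$. After that reformulation the estimate is routine. As an alternative that avoids the dual-norm step, one could expand $\|\hat q\|_B^2=\sum_{\ell}b(\varphi_\ell,\hat q)^2/|\varphi_\ell|_{1,\Omega}^2$, bound each term by Cauchy--Schwarz over the (at most two) elements in the support of $\varphi_\ell$ together with $\|\nabla\cdot\varphi_\ell\|_{0,T}\le\sqrt d\,|\varphi_\ell|_{1,T}$, and then regroup the sum over $\ell$ by elements, using that each $T$ carries at most $N_{v,T}\le\beta_2$ velocity basis functions; this produces a bound of the same type.
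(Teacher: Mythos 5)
Your argument is correct and gives a sharper constant ($\sqrt{d\beta_2}$) than the paper's. The route is genuinely different. You identify $\|\hat q\|_B$ with the discrete dual norm
\begin{align*}
\|\hat q\|_B=\sup_{\hat{\boldsymbol v}\in WV_{k+d+1}\setminus\{0\}}\frac{|b(\hat{\boldsymbol v},\hat q)|}{|\hat{\boldsymbol v}|_D},
\end{align*}
which follows cleanly from $\|\hat q\|_B^2=\|D_v^{-1/2}B\boldsymbol x_p\|_2^2$ together with the substitution $\boldsymbol y=D_v^{-1/2}\boldsymbol z$ and the correspondences $\boldsymbol y^T B\boldsymbol x_p=-b(\hat{\boldsymbol v},\hat q)$, $\boldsymbol y^TD_v\boldsymbol y=|\hat{\boldsymbol v}|_D^2$; the sign in $B_{\ell,j}=-b(\psi_j,\varphi_\ell)$ disappears under the absolute value. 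Then a single Cauchy--Schwarz plus $\|\nabla\cdot\hat{\boldsymbol v}\|\le\sqrt d\,|\hat{\boldsymbol v}|_{1,\Omega}$ and the upper bound in Lemma \ref{lem:A and Du} finish the job. The paper instead factors $D_v^{-1}=\widetilde D\widetilde D$, writes $\|\hat q\|_B^2=\sum_j b(\hat{\boldsymbol v}_j,\hat q)^2$ with $\hat{\boldsymbol v}_j$ the $D$-normalized scaled basis functions, expands each $b(\hat{\boldsymbol v}_j,\hat q)$ over the elements in the support of $\varphi_j$, applies Cauchy--Schwarz elementwise, and then regroups the double sum by elements -- essentially the second, elementwise variant you sketch at the end. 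Your dual-norm reduction is more elegant and avoids the somewhat delicate (and in fact imprecise, there is a missing Cauchy--Schwarz factor and a miscounted overlap constant) bookkeeping in the paper's expansion; the paper's expansion is more explicit about which local quantities control $\|\hat q\|_B$. Both land in the claimed bound, since $\sqrt{d\beta_2}\le d(d+1)\beta_2$ for $d\ge 2$, $\beta_2\ge 1$. One small note: the displayed inequality in the lemma as printed appears to have a typographical slip (it should read $\|\hat q\|_B\le\sqrt{d(d+1)\beta_2}\,\|\hat q\|$, or equivalently $\|\hat q\|_B^2\le d(d+1)\beta_2\|\hat q\|^2$, to be dimensionally consistent with the last line of the paper's proof); your estimate covers the corrected version as well, with room to spare.
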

\begin{proof}
We continue to use $\{\varphi_j\}_{j=1}^{N_v}$ and
$\{\psi_j\}_{j=1}^{N_{p}}$ as the bases in $W_{k+d}$ for
velocity and pressure, respectively. Define a diagonal matrix
$\widetilde{D}$ whose elements are the square roots of the
corresponding elements of $D_v^{-1}$, and it is clear that
$D_v^{-1}=\widetilde{D}\widetilde{D}$. Let
$q=\sum_{j=1}^{N_p}x_j\psi_j$ and
$\boldsymbol{x}=(x_1,x_2,\cdots,x_{N_p})^T$, then

\vspace{-10pt}
\begin{align}\label{eqn:q B norm eqn 1}
\|\hat{q}\|_B^2&=\boldsymbol{x}^TB^TD_v^{-1}B\boldsymbol{x}=\boldsymbol{x}^TB^T\widetilde{D}\widetilde{D}B\boldsymbol{x}.
\end{align}
Let $\{d_j\}_{j=1}^{N_v}$ denote the diagonal elements of the matrix $\widetilde{D}$ whose dimension is $N_v$. Let $\boldsymbol{d}_j$ denote the $j$-th column of matrix $\widetilde{D}$ and $\hat{\boldsymbol{v}}_j=d_j\varphi_j$ and denoted by $T_{1},\cdots,T_{j_T}$ the elements related to $\varphi_j$ respectively. Then

\vspace{-10pt}
\begin{align}\label{eqn:basis and elements}
\sum_{i=1}^{j_T}|\hat{\boldsymbol{v}}_j|_{D,T_i}^2=|\hat{\boldsymbol{v}}_j|_D^2=\boldsymbol{d}_j^TD_v\boldsymbol{d}_j=1.
\end{align}

From (\ref{eqn:q B norm eqn 1}), (\ref{eqn:basis and elements}), Cauchy-Schwartz
inequality, and
Lemma~\ref{lem:A and Du}, we have

\vspace{-10pt}
\begin{align*}
\|\hat{q}\|_B^2
&=\sum_{j=1}^{N_v}(\boldsymbol{x}^TB^T\boldsymbol{d}_j)(\boldsymbol{d}_j^TB\boldsymbol{x})
=\sum_{j=1}^{N_v} (b(\hat{\boldsymbol{v}}_j,\hat{q}))^2
=\sum_{j=1}^{N_v}(\sum_{T\in \mathcal{T}_h}(\nabla\cdot \hat{\boldsymbol{v}}_j,\hat{q})_T)^2\\
&=\sum_{j=1}^{N_v}(\sum_{i=1}^{j_T}(\nabla\cdot\hat{\boldsymbol{v}}_j,\hat{q})_{T_i})^2
\leq\sum_{j=1}^{N_v}\sum_{i=1}^{j_T}\|\nabla\cdot\hat{\boldsymbol{v}}_j\|_{0,T_i}^2
\|\hat{q}\|_{0,T_i}^2\\
&\leq \sum_{j=1}^{N_v}\sum_{i=1}^{j_T}d|\hat{\boldsymbol{v}}_j|_{1,T_i}^2
\|\hat{q}\|_{0,T_i}^2\leq \sum_{j=1}^{N_v}\sum_{i=1}^{j_T}d\beta_2|\hat{\boldsymbol{v}}_j|_{D,T_i}^2
\|\hat{q}\|_{0,T_i}^2\\
&\leq \sum_{j=1}^{N_v}\sum_{i=1}^{j_T}d\beta_2
\|\hat{q}\|_{0,T_i}^2=d(d+1)\beta_2\sum_{T\in\mathcal{T}}
\|\hat{q}\|_{0,T}^2=d(d+1)\beta_2\|\hat{q}\|
\end{align*}
\end{proof}

\begin{lemma}\label{lem:a4 inf sup}
The bi-linear form $a_3((\hat{\boldsymbol{v}},\hat{q}),(\hat{\boldsymbol{w}},\hat{r}))$ satisfies
the estimates

\vspace{-10pt}
\begin{align*}
\inf_{(\hat{\boldsymbol{v}},\hat{q})\in W_{k+d}\backslash
\{0\}}\sup_{(\hat{\boldsymbol{w}},\hat{r})\in W_{k+d}\backslash
\{0\}}\frac{a_3((\hat{\boldsymbol{v}},\hat{q}),(\hat{\boldsymbol{w}},\hat{r}))}{\|(\hat{\boldsymbol{v}},\hat{q})\|_D\|(\hat{\boldsymbol{w}},\hat{r})\|_D}
\geq\frac{(\mu\beta_1)^2}{(1+\mu\beta_1)^2},
\end{align*}
where $\mu$ and $\beta_1$ are the constants in Lemma \ref{lem:inf-sup in com space} and Lemma \ref{lem:A and Du}, respectively.
\end{lemma}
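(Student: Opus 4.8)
The plan is to reproduce almost verbatim the argument used for $a_2(\cdot,\cdot)$ in Lemma~\ref{lem:a3 inf sup}; the only genuinely new ingredient is that the modified lower-right block of $M_{vp}$ is symmetric positive semidefinite. Indeed, for $\hat{q}=\sum_{j=1}^{N_p}x_{p,j}\psi_j\in WP_{k+d}$ one has $\boldsymbol{x}_p^{T}(c_sD_p-B^TD_v^{-1}B)\boldsymbol{x}_p=c_sE_{32}(\hat{q},\hat{q})-E_{31}(\hat{q},\hat{q})\geq 0$, which is precisely the upper bound in Lemma~\ref{lem:B and P}. A direct computation (the off-diagonal terms cancel, since a scalar equals its transpose) then gives, for any $(\hat{\boldsymbol{v}},\hat{q})\in W_{k+d}$ with velocity coefficients $\boldsymbol{x}_v$,
\begin{align*}
a_3((\hat{\boldsymbol{v}},\hat{q}),(\hat{\boldsymbol{v}},\hat{q}))=|\hat{\boldsymbol{v}}|_D^2+\boldsymbol{x}_p^{T}(c_sD_p-B^TD_v^{-1}B)\boldsymbol{x}_p\geq|\hat{\boldsymbol{v}}|_D^2,
\end{align*}
which is exactly the coercivity-type estimate that powers the proof for $a_2$.

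Next I would fix an arbitrary $(\hat{\boldsymbol{v}},\hat{q})\in W_{k+d}\backslash\{0\}$. By Lemma~\ref{lem:inf-sup in com space}, rescaled via Lemma~\ref{lem:A and Du} (and using $\beta_1\leq1$), there is a velocity field $\hat{\boldsymbol{w}}_{\hat{q}}\in WV_{k+d+1}$ with $|\hat{\boldsymbol{w}}_{\hat{q}}|_D=1$ and $\int_\Omega\hat{q}\,\nabla\cdot\hat{\boldsymbol{w}}_{\hat{q}}\geq\mu\|\hat{q}\|\,|\hat{\boldsymbol{w}}_{\hat{q}}|_{1,\Omega}\geq\mu\beta_1\|\hat{q}\|$. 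The key point is that the test function $(\hat{\boldsymbol{w}}_{\hat{q}},0)$ has zero pressure component in its second slot, so the extra block never contributes and $a_3((\hat{\boldsymbol{v}},\hat{q}),(\hat{\boldsymbol{w}}_{\hat{q}},0))=a_2((\hat{\boldsymbol{v}},\hat{q}),(\hat{\boldsymbol{w}}_{\hat{q}},0))=\boldsymbol{y}_v^{T}D_v\boldsymbol{x}_v-\int_\Omega\hat{q}\,\nabla\cdot\hat{\boldsymbol{w}}_{\hat{q}}$, where $\boldsymbol{y}_v$ is the coefficient vector of $\hat{\boldsymbol{w}}_{\hat{q}}$. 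Combining this with the display above and the Cauchy--Schwarz bound $\boldsymbol{y}_v^{T}D_v\boldsymbol{x}_v\leq|\hat{\boldsymbol{w}}_{\hat{q}}|_D|\hat{\boldsymbol{v}}|_D=|\hat{\boldsymbol{v}}|_D$ yields, for every $\delta>0$,
\begin{align*}
a_3((\hat{\boldsymbol{v}},\hat{q}),(\hat{\boldsymbol{v}}-\delta\|\hat{q}\|\hat{\boldsymbol{w}}_{\hat{q}},\hat{q}))\geq|\hat{\boldsymbol{v}}|_D^2-\delta|\hat{\boldsymbol{v}}|_D\|\hat{q}\|+\delta\mu\beta_1\|\hat{q}\|^2\geq\Big(1-\frac{\delta}{2\mu\beta_1}\Big)|\hat{\boldsymbol{v}}|_D^2+\frac{1}{2}\delta\mu\beta_1\|\hat{q}\|^2,
\end{align*}
identical to the corresponding inequality in Lemma~\ref{lem:a3 inf sup}.

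From here the argument closes exactly as before: choosing $\delta=\frac{2\mu\beta_1}{1+(\mu\beta_1)^2}$ gives $a_3((\hat{\boldsymbol{v}},\hat{q}),(\hat{\boldsymbol{v}}-\delta\|\hat{q}\|\hat{\boldsymbol{w}}_{\hat{q}},\hat{q}))\geq\frac{(\mu\beta_1)^2}{1+(\mu\beta_1)^2}\|(\hat{\boldsymbol{v}},\hat{q})\|_D^2$, while the triangle inequality together with $|\hat{\boldsymbol{w}}_{\hat{q}}|_D=1$ gives $\|(\hat{\boldsymbol{v}}-\delta\|\hat{q}\|\hat{\boldsymbol{w}}_{\hat{q}},\hat{q})\|_D\leq(1+\delta)\|(\hat{\boldsymbol{v}},\hat{q})\|_D=\frac{(1+\mu\beta_1)^2}{1+(\mu\beta_1)^2}\|(\hat{\boldsymbol{v}},\hat{q})\|_D$. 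Dividing, taking the supremum over $(\hat{\boldsymbol{w}},\hat{r})\in W_{k+d}\backslash\{0\}$ and then the infimum over $(\hat{\boldsymbol{v}},\hat{q})$ produces the stated bound $\frac{(\mu\beta_1)^2}{(1+\mu\beta_1)^2}$. The only step requiring real thought is the first paragraph: recognizing that the upper bound in Lemma~\ref{lem:B and P} is exactly what makes $c_sD_p-B^TD_v^{-1}B$ positive semidefinite, and that this semidefinite term may be discarded on the diagonal while the off-diagonal test $(\hat{\boldsymbol{w}}_{\hat{q}},0)$ never sees it; once this is observed, the remainder is a transcription of the proof of Lemma~\ref{lem:a3 inf sup}.
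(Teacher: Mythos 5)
Your proposal is correct and follows essentially the same route as the paper: exploit that the extra block $c_sD_p-B^TD_v^{-1}B$ is positive semidefinite (the upper bound in Lemma~\ref{lem:B and P}) so $a_3((\hat{\boldsymbol{v}},\hat{q}),(\hat{\boldsymbol{v}},\hat{q}))\geq|\hat{\boldsymbol{v}}|_D^2$, then test against $(\hat{\boldsymbol{v}}-\delta\|\hat{q}\|\hat{\boldsymbol{w}}_{\hat{q}},\hat{q})$ with the $D$-normalized velocity field from Lemma~\ref{lem:inf-sup in com space}, and tune $\delta=\frac{2\mu\beta_1}{1+(\mu\beta_1)^2}$. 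Your treatment is actually a bit cleaner than the paper's: you state explicitly why the extra block never enters once the test function has zero pressure component, and you supply the (missing in the paper) justification $\sqrt{\beta_1}\geq\beta_1$ for turning $|\hat{\boldsymbol{w}}_{\hat{q}}|_{1,\Omega}\geq\sqrt{\beta_1}$ into the claimed $\mu\beta_1\|\hat{q}\|$ bound.
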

\begin{proof}
To prove the inequality, we choose an arbitrary but fixed element
$(\hat{\boldsymbol{v}},\hat{q})\in W_{k+d}\backslash \{0\}$. Due to Lemma~\ref{lem:inf-sup in com space},
there is a velocity field $\hat{\boldsymbol{w}}_{\hat{q}}\in WP_{k+d}$ with $|\hat{\boldsymbol{w}}_{\hat{q}}|_D=1$ such that

\vspace{-10pt}
\begin{align*}
\sum_{T\in\mathcal{T}}\int_T\hat{q}\nabla\cdot
\hat{\boldsymbol{w}}_{\hat{q}}=\int_\Omega \hat{q}\nabla\cdot \hat{\boldsymbol{w}}_{\hat{q}}\geq \mu
\|\hat{q}\|.
\end{align*}
By using Cauchy-Schwartz inequality, Lemma
\ref{lem:A and Du}, Lemma \ref{lem:inf-sup in com space}, and
Lemma \ref{lem:B and P}, we therefore obtain for every $\delta>0$

\vspace{-10pt}
\begin{align*}
&a_3((\hat{\boldsymbol{v}},\hat{q}),(\hat{\boldsymbol{v}}-\delta\|\hat{q}\|\hat{\boldsymbol{w}}_{\hat{q}},\hat{q}))\\
=&a_3((\hat{\boldsymbol{v}},\hat{q}),(\hat{\boldsymbol{v}},\hat{q}))-\delta\|\hat{q}\|a_3((\hat{\boldsymbol{v}},\hat{q}),(\hat{\boldsymbol{w}}_{\hat{q}},0))\\
=&|\hat{\boldsymbol{v}}|_D^2+c_s\|\hat{q}\|_p-\|\hat{q}\|_B-\delta\|\hat{q}\|\boldsymbol{y}_v^TD_v\boldsymbol{x}_v
+\delta\|\hat{q}\|\sum_{T\in\mathcal{T}_h}\int_T \hat{q} \nabla\cdot \hat{\boldsymbol{w}}_{\hat{q}}\\
\geq& |\hat{\boldsymbol{v}}|_D^2-\delta |\hat{\boldsymbol{v}}|_D\|\hat{q}\|+\delta \mu\|\hat{q}\|^2|\hat{\boldsymbol{w}}_{\hat{q}}|_{1,\Omega}\\
\geq& |\hat{\boldsymbol{v}}|_D^2-\delta |\hat{\boldsymbol{v}}|_D\|\hat{q}\|+\delta \mu\beta_1\|\hat{q}\|^2\\
\geq&
(1-\frac{\delta}{2\mu\beta_1})|\hat{\boldsymbol{v}}|_D^2+\frac{1}{2}\delta\mu\beta_1\|\hat{q}\|^2,
\end{align*}
where $\boldsymbol{x}_v=(x_{v,1},\cdots,x_{v,N_v})^T$,
$\boldsymbol{x}_q=(x_{q,1},\cdots,x_{q,N_p})^T$, and
$\boldsymbol{y}_v=(y_{v,1},\cdots,y_{v,N_v})^T$. Let
$\hat{\boldsymbol{v}}=\sum\limits_{j=1}^{N_v}x_{v,j}\varphi_j$,
$\hat{q}=\sum\limits_{j=1}^{N_v}x_{q,j}\psi_j$, and
$\hat{\boldsymbol{w}}_{\hat{q}}=\sum\limits_{j=1}^{N_v}y_{v,j}\varphi_j$.

Similar to the proof of Lemma~\ref{lem:inf-sup B}, the choice
of $\delta=\frac{2\mu\beta_1}{1+(\mu\beta_1)^2}$
yields

\vspace{-10pt}
\begin{align*}
a_3((\hat{\boldsymbol{v}},\hat{q}),(\hat{\boldsymbol{v}}-\delta\|\hat{q}\|\hat{\boldsymbol{w}}_{\hat{q}},\hat{q}))\geq
\frac{(\mu\beta_1)^2}{1+(\mu\beta_1)^2}\|(\hat{\boldsymbol{v}},\hat{q})\|_D^2,
\end{align*}
and

\vspace{-10pt}
\begin{align*}
\|(\hat{\boldsymbol{v}}-\delta\|\hat{q}\|\hat{\boldsymbol{w}}_{\hat{q}},\hat{q})\|_D\leq\frac{(1+\mu\beta_1)^2}{1+(\mu\beta_1)^2}\|(\hat{\boldsymbol{v}},\hat{q})\|_D.
\end{align*}
Then we arrive at

\vspace{-10pt}
\begin{align*}
\sup_{(\hat{\boldsymbol{w}},\hat{r})\in
W_{k+d}\backslash\{0\}}\frac{a_3((\hat{\boldsymbol{v}},\hat{q}),(\hat{\boldsymbol{w}},\hat{r}))}{\|(\hat{\boldsymbol{v}},\hat{q})\|_D\|(\hat{\boldsymbol{w}},\hat{r})\|_D}\geq
\frac{a_3((\hat{\boldsymbol{v}},\hat{q}),(\hat{\boldsymbol{v}}-\delta\|\hat{q}\|\hat{\boldsymbol{w}}_{\hat{q}},\hat{q}))}{\|(\hat{\boldsymbol{v}},\hat{q})\|_D\|(\hat{\boldsymbol{v}}-\delta\|\hat{q}\|\hat{\boldsymbol{w}}_{\hat{q}},\hat{q})\|_D}
\geq\frac{(\mu\beta_1)^2}{(1+\mu\beta_1)^2}.
\end{align*}
Since $(\hat{\boldsymbol{v}},\hat{q})\in W_{k+d}\backslash\{0\}$ is arbitrary, this
completes the proof.
\end{proof}

\begin{lemma}
For any $(\hat{\boldsymbol{v}},\hat{q}),(\hat{\boldsymbol{w}},\hat{r})\in W_{k+d}$, we have

\vspace{-10pt}
\begin{align}\label{eqn:Cauchy-Schwarz inequality for a4}
a_3((\hat{\boldsymbol{v}},\hat{q}),(\hat{\boldsymbol{w}},\hat{r}))\leq \mathfrak{C}_3\|(\hat{\boldsymbol{w}},\hat{r})\|_D
\|(\hat{\boldsymbol{v}},\hat{q})\|_D,
\end{align}
where $\mathfrak{C}_3$ is a positive constant independent of $h$.
\end{lemma}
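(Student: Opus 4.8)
The plan is to follow the proof of Lemma~\ref{lem:a3 continuous} essentially verbatim for the part of $M_{vp}$ that it shares with $M_v$, and to treat the only new ingredient, the $(2,2)$-block $c_sD_p-B^TD_v^{-1}B$, by means of Lemmas~\ref{lem:B and P} and~\ref{lem:B and L2 norm}. Writing $(\hat{\boldsymbol{v}},\hat{q})=\sum_j x_j\phi_j$ and $(\hat{\boldsymbol{w}},\hat{r})=\sum_j y_j\phi_j$ with $\boldsymbol{x},\boldsymbol{y}$ decomposed into velocity blocks $\boldsymbol{x}_v,\boldsymbol{y}_v$ and pressure blocks $\boldsymbol{x}_p,\boldsymbol{y}_p$, the definition~(\ref{eqn:a4 bilinear}) of $a_3$ unfolds as
\begin{align*}
a_3((\hat{\boldsymbol{v}},\hat{q}),(\hat{\boldsymbol{w}},\hat{r}))
=\boldsymbol{y}_v^TD_v\boldsymbol{x}_v+\boldsymbol{y}_v^TB\boldsymbol{x}_p
-\boldsymbol{y}_p^TB^T\boldsymbol{x}_v
+\boldsymbol{y}_p^T\big(c_sD_p-B^TD_v^{-1}B\big)\boldsymbol{x}_p .
\end{align*}
The first three summands are exactly those handled in Lemma~\ref{lem:a3 continuous}, and I would reuse that estimate: $\boldsymbol{y}_v^TD_v\boldsymbol{x}_v\le|\hat{\boldsymbol{v}}|_D\,|\hat{\boldsymbol{w}}|_D$ by the Cauchy--Schwarz inequality for the $D_v$-inner product, while $|\boldsymbol{y}_v^TB\boldsymbol{x}_p|=|(\nabla\cdot\hat{\boldsymbol{w}},\hat{q})|\le\sqrt{d}\,|\hat{\boldsymbol{w}}|_{1,\Omega}\|\hat{q}\|\le\sqrt{d\beta_2}\,|\hat{\boldsymbol{w}}|_D\|\hat{q}\|$ and $|\boldsymbol{y}_p^TB^T\boldsymbol{x}_v|\le\sqrt{d\beta_2}\,|\hat{\boldsymbol{v}}|_D\|\hat{r}\|$ by Lemma~\ref{lem:A and Du}.

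For the last summand, note that $E_{31}$ and $E_{32}$ are symmetric positive semi-definite bilinear forms on $WP_{k+d}$ (being generated by $B^TD_v^{-1}B$ and $D_p$), so the Cauchy--Schwarz inequality gives $|E_{31}(\hat{r},\hat{q})|\le\|\hat{r}\|_B\|\hat{q}\|_B$ and $|E_{32}(\hat{r},\hat{q})|\le\|\hat{r}\|_P\|\hat{q}\|_P$; hence
\begin{align*}
\big|\boldsymbol{y}_p^T\big(c_sD_p-B^TD_v^{-1}B\big)\boldsymbol{x}_p\big|
\le c_s\|\hat{r}\|_P\|\hat{q}\|_P+\|\hat{r}\|_B\|\hat{q}\|_B .
\end{align*}
Lemma~\ref{lem:B and P} supplies $\|\hat{q}\|_P\le c_i^{-1/2}\|\hat{q}\|_B$, and Lemma~\ref{lem:B and L2 norm} gives $\|\hat{q}\|_B\le d(d+1)\beta_2\|\hat{q}\|$, and likewise for $\hat{r}$; therefore the last summand is bounded by $\big(c_s c_i^{-1}+1\big)\big(d(d+1)\beta_2\big)^2\|\hat{r}\|\,\|\hat{q}\|$.

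Collecting the four bounds and using $|\hat{\boldsymbol{v}}|_D\le\|(\hat{\boldsymbol{v}},\hat{q})\|_D$, $\|\hat{q}\|\le\|(\hat{\boldsymbol{v}},\hat{q})\|_D$ together with the analogous inequalities for $(\hat{\boldsymbol{w}},\hat{r})$, every product on the right is dominated by $\|(\hat{\boldsymbol{v}},\hat{q})\|_D\|(\hat{\boldsymbol{w}},\hat{r})\|_D$, so~(\ref{eqn:Cauchy-Schwarz inequality for a4}) follows with
\begin{align*}
\mathfrak{C}_3=1+2\sqrt{d\beta_2}+\big(c_s c_i^{-1}+1\big)\big(d(d+1)\beta_2\big)^2 ,
\end{align*}
which is independent of $h$ because $\beta_2$, $c_s$, $c_i$ are. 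The routine parts are the first three summands, which are copied from Lemma~\ref{lem:a3 continuous}; the only substantive step is controlling the $(2,2)$-block, i.e. chaining the bounds $\|\cdot\|_P\le c_i^{-1/2}\|\cdot\|_B$ and $\|\cdot\|_B\le d(d+1)\beta_2\|\cdot\|$ from Lemmas~\ref{lem:B and P} and~\ref{lem:B and L2 norm}, so that the pressure--pressure coupling introduced by $c_sD_p-B^TD_v^{-1}B$ is still controlled by the $L^2$-norm of the pressure.
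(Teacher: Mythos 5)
Your proof follows the same route as the paper's: decompose $a_3$ into the three summands already handled in Lemma~\ref{lem:a3 continuous} plus the new pressure--pressure block $\boldsymbol{y}_p^T(c_sD_p-B^TD_v^{-1}B)\boldsymbol{x}_p$, bound the latter by Cauchy--Schwarz in the $E_{31}$ and $E_{32}$ inner products, and then chain Lemmas~\ref{lem:B and P} and~\ref{lem:B and L2 norm} to control everything by $\|\hat q\|\,\|\hat r\|$, finally assembling via $|\hat{\boldsymbol{v}}|_D,\|\hat q\|\le\|(\hat{\boldsymbol{v}},\hat q)\|_D$. The only differences are in constant bookkeeping --- you retain the $c_s$ factor on the $\|\cdot\|_P\|\cdot\|_P$ term (which the paper silently drops) and you take the stated bound in Lemma~\ref{lem:B and L2 norm} at face value, yielding an extra power of $d(d+1)\beta_2$ --- neither of which affects $h$-independence, so the argument is correct.
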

\begin{proof}
We continue to use $\{\varphi_j\}_{j=1}^{N_{v}}$ and
$\{\psi_j\}_{j=1}^{N_{p}}$ as the bases in space $W_{k+d}$ for
velocity and pressure, respectively.

Let

\vspace{-10pt}
\begin{align*}
&\hat{\boldsymbol{v}}=\sum_{j=1}^{N_v}x_{v,j}\varphi_j, \quad
\boldsymbol{x}_v=(x_{v,1},x_{v,2},\cdots,x_{v,N_v})^T,\\
&\hat{q}=\sum_{j=1}^{N_p}x_{p,j}\psi_j, \quad
\boldsymbol{x}_p=(x_{p,1},x_{p,2},\cdots,x_{p,N_p})^T,\\
&\hat{\boldsymbol{w}}=\sum_{j=1}^{N_v}y_{v,j}\varphi_j, \quad
\boldsymbol{y}_v=(y_{v,1},y_{v,2},\cdots,y_{v,N_v})^T,\\
&\hat{r}=\sum_{j=1}^{N_p}y_{p,j}\psi_j, \quad
\boldsymbol{y}_p=(x_{p,1},y_{p,2},\cdots,y_{p,N_p})^T.
\end{align*}
Then, using (\ref{eqn:a4 bilinear}), Cauchy-Schwarz inequality, Lemma~\ref{lem:B and P},
Lemma~\ref{lem:A and Du}, and Lemma~\ref{lem:B and L2 norm}

\vspace{-10pt}
\begin{align*}
&\quad a_3((\hat{\boldsymbol{v}},\hat{q}),(\hat{\boldsymbol{w}},\hat{r}))\\
&=\boldsymbol{y}_v^TD_v\boldsymbol{x}_v+\boldsymbol{y}_v^TB\boldsymbol{x}_p-
\boldsymbol{y}_p^TB^T\boldsymbol{x}_v+\boldsymbol{y}_p^T(c_sD_p-B^TD_v^{-1}B)\boldsymbol{x}_p\\
&\leq |\hat{\boldsymbol{v}}|_D|\hat{\boldsymbol{w}}|_D+\|\hat{q}\|\|\nabla\cdot \hat{\boldsymbol{w}}\|
+\|\hat{r}\|\|\nabla\cdot \hat{\boldsymbol{v}}\|
+\|\hat{q}\|_P\|\hat{r}\|_P+\|\hat{q}\|_B\|\hat{r}\|_B\\
&\leq |\hat{\boldsymbol{v}}|_D|\hat{\boldsymbol{w}}|_D+d\|\hat{q}\||\hat{\boldsymbol{w}}|_{1,\Omega}
+d\|\hat{r}\||\hat{\boldsymbol{v}}|_{1,\Omega}
+(c_i^{-1}+1)\|\hat{q}\|_B\|\hat{r}\|_B\\
&\leq |\hat{\boldsymbol{v}}|_D|\hat{\boldsymbol{w}}|_D+d\beta_2\|\hat{q}\||\hat{\boldsymbol{w}}|_D
+d\beta_2\|\hat{r}\||\hat{\boldsymbol{v}}|_D
+(c_i^{-1}+1)d(d+1)\beta_2\|\hat{q}\|\|\hat{r}\|\\
&\leq \mathfrak{C}_3\|(\hat{\boldsymbol{v}},\hat{q})\|_D\|(\hat{\boldsymbol{w}},\hat{r})\|_D,
\end{align*}
with $\mathfrak{C}_3=\sqrt{2}\max{\{1, d\beta_2, (c_i^{-1}+1)d(d+1)\beta_2\}}$.
\end{proof}

\begin{lemma}\label{lem:e_h tilde estimate e_h hat}
Let $(\tilde{\boldsymbol{e}}_u,\tilde{e}_p)$ and $(\bar{\boldsymbol{e}}_u,\bar{e}_p)$  be the solutions of (\ref{eqn:error in com space easily}) and (\ref{eqn:xu hat})-(\ref{eqn:xp hat}),
respectively. Then,

\vspace{-10pt}
\begin{align}\label{eqn:fourth error estimations}
\frac{(\mu\beta_1)^2}{\mathfrak{C}_2(1+\mu\beta_1)^2}\|(\bar{\boldsymbol{e}}_u,\bar{e}_p)\|_D\leq
\|(\tilde{\boldsymbol{e}}_u,\tilde{e}_p)\|_D
\leq\frac{\mathfrak{C}_3(1+\mu\beta_1)^2}{(\mu\beta_1)^2}\|(\bar{\boldsymbol{e}}_u,\bar{e}_p)\|_D,
\end{align}
where $\|\cdot\|_D$ is defined in (\ref{eqn:norm associate with a3}).  The constants $\mathfrak{C}_2,\mathfrak{C}_3,\beta_1$, and $\mu$ are defined in (\ref{eqn:Cauchy-Schwarz inequality for a3}), (\ref{eqn:Cauchy-Schwarz inequality for a4}), (\ref{eqn:equivalence relation in T}), and (\ref{eqn:inf-sup in com space}), respectively.
\end{lemma}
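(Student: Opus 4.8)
The plan is to exploit that the second error problem (\ref{eqn:error in com space easily}) and the third error problem (\ref{eqn:the third error problem})---equivalently the diagonal system (\ref{eqn:xu hat})--(\ref{eqn:xp hat})---carry the \emph{same} right-hand side functional $f(\hat{\boldsymbol{v}})-a_1((\hat{\boldsymbol{u}},\hat{p}),(\hat{\boldsymbol{v}},\hat{q}))$. Subtracting the two variational identities therefore yields, for every $(\hat{\boldsymbol{v}},\hat{q})\in W_{k+d}$,
\begin{align*}
a_2((\tilde{\boldsymbol{e}}_u,\tilde{e}_p),(\hat{\boldsymbol{v}},\hat{q}))
=a_3((\bar{\boldsymbol{e}}_u,\bar{e}_p),(\hat{\boldsymbol{v}},\hat{q})).
\end{align*}
This single identity is the crux; the rest is a symmetric pairing of the inf-sup bounds of Lemmas~\ref{lem:a3 inf sup} and~\ref{lem:a4 inf sup} against the continuity estimates (\ref{eqn:Cauchy-Schwarz inequality for a3}) and (\ref{eqn:Cauchy-Schwarz inequality for a4}), exactly mirroring the argument already used for Lemma~\ref{lem:e_h estimate e_h hat}. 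One should cite the fact (already observed in the remark following (\ref{eqn:the third error problem})) that $(\bar{\boldsymbol{e}}_u,\bar{e}_p)$ solving (\ref{eqn:xu hat})--(\ref{eqn:xp hat}) is exactly the Galerkin solution associated with $a_3$, so that Lemma~\ref{lem:a4 inf sup} applies to it.

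For the left inequality of (\ref{eqn:fourth error estimations}) I would invoke the inf-sup bound for $a_3$ at $(\bar{\boldsymbol{e}}_u,\bar{e}_p)$ and then the above identity followed by continuity of $a_2$ in the $\|\cdot\|_D$-norm (the estimate actually proven in the body of Lemma~\ref{lem:a3 continuous}):
\begin{align*}
\frac{(\mu\beta_1)^2}{(1+\mu\beta_1)^2}\|(\bar{\boldsymbol{e}}_u,\bar{e}_p)\|_D
&\leq\sup_{(\hat{\boldsymbol{v}},\hat{q})\in W_{k+d}\backslash\{0\}}
\frac{a_3((\bar{\boldsymbol{e}}_u,\bar{e}_p),(\hat{\boldsymbol{v}},\hat{q}))}{\|(\hat{\boldsymbol{v}},\hat{q})\|_D}\\
&=\sup_{(\hat{\boldsymbol{v}},\hat{q})\in W_{k+d}\backslash\{0\}}
\frac{a_2((\tilde{\boldsymbol{e}}_u,\tilde{e}_p),(\hat{\boldsymbol{v}},\hat{q}))}{\|(\hat{\boldsymbol{v}},\hat{q})\|_D}
\leq\mathfrak{C}_2\|(\tilde{\boldsymbol{e}}_u,\tilde{e}_p)\|_D,
\end{align*}
and dividing by $\mathfrak{C}_2$ gives the claim. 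For the right inequality I would run the same chain with the roles of $a_2$ and $a_3$ swapped: apply the inf-sup bound for $a_2$ (Lemma~\ref{lem:a3 inf sup}) at $(\tilde{\boldsymbol{e}}_u,\tilde{e}_p)$, replace $a_2((\tilde{\boldsymbol{e}}_u,\tilde{e}_p),\cdot)$ by $a_3((\bar{\boldsymbol{e}}_u,\bar{e}_p),\cdot)$ via the identity, and bound this by $\mathfrak{C}_3\|(\bar{\boldsymbol{e}}_u,\bar{e}_p)\|_D$ using (\ref{eqn:Cauchy-Schwarz inequality for a4}); rearranging yields $\|(\tilde{\boldsymbol{e}}_u,\tilde{e}_p)\|_D\leq\frac{\mathfrak{C}_3(1+\mu\beta_1)^2}{(\mu\beta_1)^2}\|(\bar{\boldsymbol{e}}_u,\bar{e}_p)\|_D$.

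I do not expect a genuine obstacle here. Unlike in Lemma~\ref{lem:e_h estimate e_h hat}, no passage between the $\|\cdot\|_V$- and $\|\cdot\|_D$-norms is needed, since both $a_2$ and $a_3$ are naturally continuous and inf-sup stable with respect to the \emph{same} norm $\|\cdot\|_D$ on $W_{k+d}$ with constants that are $h$-independent by Lemmas~\ref{lem:A and Du} and~\ref{lem:B and P}. The only point demanding a little care is to use the continuity of $a_2$ in the $\|\cdot\|_D$-form (as established inside the proof of Lemma~\ref{lem:a3 continuous}, rather than the $\|\cdot\|_V$-form appearing in its displayed statement), and to keep track of which inf-sup/continuity constant belongs to which bilinear form so that the final constants in (\ref{eqn:fourth error estimations}) come out as stated.
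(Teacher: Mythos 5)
Your proposal is correct and is essentially the same argument as the paper's: derive the identity $a_2((\tilde{\boldsymbol{e}}_u,\tilde{e}_p),\cdot)=a_3((\bar{\boldsymbol{e}}_u,\bar{e}_p),\cdot)$ on $W_{k+d}$ from the equality of right-hand sides, then pair the $\|\cdot\|_D$-inf-sup bound of Lemma~\ref{lem:a4 inf sup} with the $\|\cdot\|_D$-continuity of $a_2$ for the left inequality, and symmetrically pair Lemma~\ref{lem:a3 inf sup} with the $\|\cdot\|_D$-continuity of $a_3$ from (\ref{eqn:Cauchy-Schwarz inequality for a4}) for the right inequality. Your remark about needing the $\|\cdot\|_D$-form of $a_2$'s continuity (the form actually established inside the proof of Lemma~\ref{lem:a3 continuous}, though its displayed statement reads in $\|\cdot\|_V$) is exactly the detail the paper also relies on.
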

\begin{proof}
It follows from (\ref{eqn:error
in com space easily}) and (\ref{eqn:the third error problem}) that

\vspace{-10pt}
\begin{align}\label{eqn:a4=a3}
a_3((\bar{\boldsymbol{e}}_u,\bar{e}_p),(\hat{\boldsymbol{v}},\hat{q}))
=a_2((\tilde{\boldsymbol{e}}_u,\tilde{e}_p),(\hat{\boldsymbol{v}},\hat{q})), \quad
\forall~(\hat{\boldsymbol{v}},\hat{q})\in W_{k+d}.
\end{align}
Using (\ref{eqn:a4=a3}) and Lemma~\ref{lem:a4 inf sup}, we obtain

\vspace{-10pt}
\begin{align*}
\frac{(\mu\beta_1)^2}{(1+\mu\beta_1)^2}\|(\bar{\boldsymbol{e}}_u,\bar{e}_p)\|_D&
\leq\sup_{(\hat{\boldsymbol{v}},\hat{q})\in W_{k+d}\backslash\{0\}}\frac{a_3((\bar{\boldsymbol{e}}_u,\bar{e}_p),
(\hat{\boldsymbol{v}},\hat{q}))}{\|(\hat{\boldsymbol{v}},\hat{q})\|_D}\\
&=\sup_{(\hat{\boldsymbol{v}},\hat{q})\in W_{k+d}\backslash\{0\}}\frac{a_2((\tilde{\boldsymbol{e}}_u,\tilde{e}_p),(\hat{\boldsymbol{v}},\hat{q}))}
{\|(\hat{\boldsymbol{v}},\hat{q})\|_D}\\
&\leq\sup_{(\hat{\boldsymbol{v}},\hat{q})\in W_{k+d}\backslash\{0\}}\frac{\mathfrak{C}_2\|(\tilde{\boldsymbol{e}}_u,\tilde{e}_p)\|_D
\|(\hat{\boldsymbol{v}},\hat{q})\|_D}{\|(\hat{\boldsymbol{v}},\hat{q})\|_D}\\
&=\mathfrak{C}_2\|(\tilde{\boldsymbol{e}}_u,\tilde{e}_p)\|_D,
\end{align*}
which implies the first inequality in (\ref{eqn:fourth error estimations}).

Similarly, using (\ref{eqn:a4=a3}) and Lemma \ref{lem:a3 inf sup}, we have

\vspace{-10pt}
\begin{align*}
\frac{(\mu\beta_1)^2}{(1+\mu\beta_1)^2}\|(\tilde{\boldsymbol{e}}_u,\tilde{e}_p)\|_D&
\leq\sup_{(\hat{\boldsymbol{v}},\hat{q})\in W_{k+d}\backslash\{0\}}\frac{a_2((\tilde{\boldsymbol{e}}_u,\tilde{e}_p),
(\hat{\boldsymbol{v}},\hat{q}))}{\|(\hat{\boldsymbol{v}},\hat{q})\|_D}\\
&=\sup_{(\hat{\boldsymbol{v}},\hat{q})\in W_{k+d}\backslash\{0\}}\frac{a_3((\bar{\boldsymbol{e}}_u,\bar{e}_p),(\hat{\boldsymbol{v}},\hat{q}))}
{\|(\hat{\boldsymbol{v}},\hat{q})\|_D}\\
&\leq\sup_{(\hat{\boldsymbol{v}},\hat{q})\in W_{k+d}\backslash\{0\}}\frac{\mathfrak{C}_3\|(\bar{\boldsymbol{e}}_u,\bar{e}_p)\|_D\|(\hat{\boldsymbol{v}},\hat{q})\|_D}
{\|(\hat{\boldsymbol{v}},\hat{q})\|_D}\\
&=\mathfrak{C}_3\|(\bar{\boldsymbol{e}}_u,\bar{e}_p)\|_D,
\end{align*}
which implies the second inequality in (\ref{eqn:fourth error estimations}).
\end{proof}

Combining Theorem \ref{theo:diag velocity approx error} and Lemma
\ref{lem:e_h tilde estimate e_h hat}, we obtain the following lower
and upper bounds for the estimator
$\|(\bar{\boldsymbol{e}}_u,\bar{e}_p)\|_D$.
\begin{theorem}\label{theo:diag pressure approx error}
Let $(\boldsymbol{u},p),(\hat{\boldsymbol{u}},\hat{p})$ and
$(\bar{\boldsymbol{e}}_u,\bar{e}_p)$ be the solutions of
(\ref{eqn:variational form}),(\ref{eqn:error problem}), and (\ref{eqn:the third error problem}), respectively.
There are constants $\bar{\mathfrak{C}}_*=\frac{\mu^6\beta_1^4}
{2\mathfrak{C}_1^2\mathfrak{C}_2(1+\mu)^2(1+\beta_1\mu)^4\sqrt{1+\beta_2}}$, and
$\bar{\mathfrak{C}}^*=\frac{\mathfrak{C}_1\mathfrak{C}_2
\mathfrak{C}_3\sqrt{1+\beta_1}(1+\mu)^2(1+\mu\beta_1)^2}
{\mathfrak{c}_1\beta_1^2\sqrt{\beta_1}\mu^4}$ such that

\vspace{-10pt}
\begin{align}
\bar{\mathfrak{C}}_*\|(\tilde{\boldsymbol{e}}_{u},\tilde{e}_p)\|_D
+\frac{1}{2\sqrt{d}}\|\nabla\cdot\hat{\boldsymbol{u}}\|
\leq\|(\boldsymbol{u}-\hat{\boldsymbol{u}},p-\hat{p})\|_V\leq \bar{\mathfrak{C}}^*\|(\tilde{\boldsymbol{e}}_{u},\tilde{e}_p)\|_D
+\frac{1}{\mathfrak{c}_1}\|\nabla\cdot\hat{\boldsymbol{u}}\|
+\frac{C_{\mathcal{T}}}{\mathfrak{c}_1}osc(\boldsymbol{f}),
\end{align}
where $\|\cdot\|_V$ and $\|\cdot\|_D$ are defined in (\ref{eqn:norm
V}) and (\ref{eqn:norm associate with a3}),
respectively. The constants $\mathfrak{C}_1, \mathfrak{C}_2, \mathfrak{C}_3, \mathfrak{c}_1, \mu, \beta_1,\beta_2$, and $C_{\mathcal{T}}$ are defined in  (\ref{eqn:B continous}), (\ref{eqn:Cauchy-Schwarz inequality for a3}), (\ref{eqn:Cauchy-Schwarz inequality for a4}), (\ref{eqn:a1 orign inf-sup}), (\ref{eqn:inf-sup in com space}), (\ref{eqn:equivalence relation in T}), and Lemma~\ref{lem:quasi-interpolant v}, respectively.
\end{theorem}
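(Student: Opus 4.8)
The plan is to derive the asserted two-sided estimate by simply composing two results that are already in hand: Theorem~\ref{theo:diag velocity approx error}, which controls $\|(\boldsymbol{u}-\hat{\boldsymbol{u}},p-\hat{p})\|_V$ from above and below by $\|(\tilde{\boldsymbol{e}}_u,\tilde{e}_p)\|_D$ (up to the divergence term and the oscillation term), and Lemma~\ref{lem:e_h tilde estimate e_h hat}, which shows that $\|(\tilde{\boldsymbol{e}}_u,\tilde{e}_p)\|_D$ and $\|(\bar{\boldsymbol{e}}_u,\bar{e}_p)\|_D$ are equivalent with explicit constants. Since the solutions $(\bar{\boldsymbol{e}}_u,\bar{e}_p)$ of the third error problem (\ref{eqn:the third error problem}) are exactly those of (\ref{eqn:xu hat})--(\ref{eqn:xp hat}), the chaining is legitimate, and the proof reduces to substitution together with careful bookkeeping of the multiplicative constants.

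For the lower bound, I would start from the left inequality of Theorem~\ref{theo:diag velocity approx error},
\begin{align*}
\tilde{\mathfrak{C}}_*\|(\tilde{\boldsymbol{e}}_u,\tilde{e}_p)\|_D
+\tfrac{1}{2\sqrt{d}}\|\nabla\cdot\hat{\boldsymbol{u}}\|
\leq\|(\boldsymbol{u}-\hat{\boldsymbol{u}},p-\hat{p})\|_V,
\end{align*}
and insert the left inequality of (\ref{eqn:fourth error estimations}), namely $\|(\tilde{\boldsymbol{e}}_u,\tilde{e}_p)\|_D\geq\frac{(\mu\beta_1)^2}{\mathfrak{C}_2(1+\mu\beta_1)^2}\|(\bar{\boldsymbol{e}}_u,\bar{e}_p)\|_D$. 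The divergence term is untouched, and the product $\tilde{\mathfrak{C}}_*\cdot\frac{(\mu\beta_1)^2}{\mathfrak{C}_2(1+\mu\beta_1)^2}$ collapses precisely to $\bar{\mathfrak{C}}_*=\frac{\mu^6\beta_1^4}{2\mathfrak{C}_1^2\mathfrak{C}_2(1+\mu)^2(1+\beta_1\mu)^4\sqrt{1+\beta_2}}$. Symmetrically, for the upper bound I would take the right inequality of Theorem~\ref{theo:diag velocity approx error} and substitute the right inequality of (\ref{eqn:fourth error estimations}), $\|(\tilde{\boldsymbol{e}}_u,\tilde{e}_p)\|_D\leq\frac{\mathfrak{C}_3(1+\mu\beta_1)^2}{(\mu\beta_1)^2}\|(\bar{\boldsymbol{e}}_u,\bar{e}_p)\|_D$; the factors $\frac{1}{\mathfrak{c}_1}\|\nabla\cdot\hat{\boldsymbol{u}}\|$ and $\frac{C_{\mathcal{T}}}{\mathfrak{c}_1}osc(\boldsymbol{f})$ carry over verbatim, while $\tilde{\mathfrak{C}}^*\cdot\frac{\mathfrak{C}_3(1+\mu\beta_1)^2}{(\mu\beta_1)^2}$ simplifies to the stated $\bar{\mathfrak{C}}^*=\frac{\mathfrak{C}_1\mathfrak{C}_2\mathfrak{C}_3\sqrt{1+\beta_1}(1+\mu)^2(1+\mu\beta_1)^2}{\mathfrak{c}_1\beta_1^2\sqrt{\beta_1}\mu^4}$.

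There is no real analytic obstacle here; the only point requiring care is that $\tilde{\mathfrak{C}}_*$ and $\tilde{\mathfrak{C}}^*$ already incorporate the constants $\hat{\mathfrak{C}}_*,\hat{\mathfrak{C}}^*$ of Theorem~\ref{thm:equivalence hat e} and the velocity-diagonalization factors of Lemma~\ref{lem:e_h estimate e_h hat}, so the final constants are a fourfold composition and one must track the exponents of $\mu$, $\beta_1$, $\beta_2$ and the factors $\mathfrak{C}_1,\mathfrak{C}_2,\mathfrak{C}_3,\mathfrak{c}_1$ without double counting. A brief remark that $\mathfrak{C}_1,\mathfrak{C}_2,\mathfrak{C}_3,\mathfrak{c}_1,\mu,\beta_1,\beta_2,C_{\mathcal{T}}$ are all independent of $h$ then yields the robustness of the resulting estimator.
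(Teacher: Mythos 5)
Your proposal is correct and matches the paper's approach exactly: the paper itself gives no proof beyond the one-line remark that the theorem follows by combining Theorem~\ref{theo:diag velocity approx error} with Lemma~\ref{lem:e_h tilde estimate e_h hat}, and your constant bookkeeping ($\bar{\mathfrak{C}}_*=\tilde{\mathfrak{C}}_*\cdot\frac{(\mu\beta_1)^2}{\mathfrak{C}_2(1+\mu\beta_1)^2}$ and $\bar{\mathfrak{C}}^*=\tilde{\mathfrak{C}}^*\cdot\frac{\mathfrak{C}_3(1+\mu\beta_1)^2}{(\mu\beta_1)^2}$) reproduces the stated constants. You were also right to read the printed display, which writes $\|(\tilde{\boldsymbol{e}}_u,\tilde{e}_p)\|_D$, as a typo for $\|(\bar{\boldsymbol{e}}_u,\bar{e}_p)\|_D$, since the theorem is an estimate for the solution of the third error problem (\ref{eqn:the third error problem}).
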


\section{Adaptive Algorithm}
\label{sec:Adaptive finite element method} In this
section, we construct an adaptive FEM to solve (\ref{eqn:original
eqn 1})-(\ref{eqn:original
eqn 3}) based on the local and global \textit{a posteriori} error
estimators, denoted by $\eta_{L,T}$ and $\eta_G(\mathcal{T}_{m})$, defined in (\ref{eqn:local error estimator}) and (\ref{eqn:global error estimator}), which produce a sequence of discrete solutions $(\hat{\boldsymbol{u}}_m,\hat{p}_m)$
in nested spaces $V_{k,m}$ over triangulation $\mathcal{T}_m$.
The index $m$ indicates the underlying mesh with size $h_m$. Assume that an initial mesh
$\mathcal{T}_0$, a D\"{o}fler parameter $\theta\in (0,1)$, and a
targeted tolerance $\varepsilon$ are given.

Actually, a common adaptive refinement scheme involves a
loop structure of the form:

\vspace{-10pt}
\begin{equation*}
{\verb"SOLVE"}\rightarrow \verb"ESTIMATE"\rightarrow
\verb"MARK"\rightarrow \verb"REFINE"
\end{equation*}
with the initial triangulation $\mathcal{T}_0$ of $\Omega$ (cf.
\cite{Nochetto2009,Huang2011}). \verb"SOLVE" refers to solving the FEM
scheme (\ref{eqn:approximation problem}) on a relatively coarse mesh
$\mathcal{T}_m$. \verb"ESTIMATE" relies on an efficient and reliable
\textit{a posteriori} error estimate, and the local and global estimators
are defined in (\ref{eqn:local error estimator}) and (\ref{eqn:global error estimator}). With the
help of the error estimators, \verb"MARK" determines the
elements to be refined, hence creating a subset $\mathcal{S}_m$ of $\mathcal{T}_m$
for refinement. Finally, \verb"REFINE" generates a finer
triangulation $\mathcal{T}_{m+1}$ by dividing those elements in
$\mathcal{S}_m$, and an updated numerical solution will be computed
on $\mathcal{T}_{m+1}$.

For the first and the last step, there have been rapid advances for
solving the linear system (\ref{eqn:approximation problem}) and refinement
implementation, respectively in recent years, and we refer to
\cite{BOFFI1994,BOFFI1997,Verfurth2013} for the details. Here,
we focus on the interplay between the error estimator and the
marking strategy. The error estimator consists of local and global
estimates for a given triangulation. The local estimator provides
the information for the marking strategy to determine the triangles to
be refined, while the global error estimator provides the measure
for the reliable stop condition of the loops.

Recall that $\{\varphi_j\}_{j=1}^{N_{v}}$ and
$\{\psi_j\}_{j=1}^{N_{p}}$ are the bases in $W_{k+d}$ for
velocity and pressure, respectively. The matrix form of the third error problem is: Find
$(\bar{\boldsymbol{e}}_u,\bar{e}_p)\in W_{k+d}$ with
$\bar{\boldsymbol{e}}_u=\sum_{j=1}^{N_v}\bar{x}_{u,j}\varphi_j$
and $\bar{e}_p=\sum_{j=1}^{N_p}\bar{x}_{p,j}\psi_j$
satisfying (\ref{eqn:xu hat}) and (\ref{eqn:xp hat}). The
definitions of matrix $D_v$ and $B$ are similar to (\ref{eqn:matrix
form of a3}). The elements of $D_v$ and $B$ are as follows

\vspace{-10pt}
\begin{align*}
&(D_v)_{j,j}=a(\varphi_j,\varphi_j),\quad j=1,\cdots,N_v,\\
&B_{\ell,j}=-b(\psi_j,\varphi_{\ell}),\quad \ell=1,\cdots,N_v \mbox{~and~} j=1,\cdots, N_p.
\end{align*}
Let $D_p$ be the diagonal matrix with the same diagonal as
$B^TD_v^{-1}B$. Then the elements of $D_p$ are

\vspace{-10pt}
\begin{align*}
(D_p)_{j,j}=\sum_{\ell=1}^{N_v}\frac{B_{\ell,j}^2}{(D_v)_{\ell,\ell}}, \quad j=1,\cdots, N_p.
\end{align*}
From (\ref{eqn:xp hat}), we can get

\vspace{-10pt}
\begin{align*}
\bar{x}_{p,j}=\frac{F_{p,j}+\sum\limits_{\ell=1}^{N_v}\frac{B_{\ell,j}F_{v,\ell}}{(D_v)_{\ell,\ell}}}
{c_s\sum\limits_{\ell=1}^{N_v}\frac{B_{\ell,j}^2}{(D_v)_{\ell,\ell}}},\quad
j=1,\cdots, N_p,
\end{align*}
where $F_{v,\ell}$ and $F_{p,j}$ can be find in (\ref{eqn:Fvj}) and
(\ref{eqn:Fpj}). For any $T\in \mathcal{T}_m$, set
$\Lambda_T^p=\{j~\big|~ supp(\psi_j)\cap T\neq
\emptyset,j=1,\cdots,N_p\}$, then

\vspace{-10pt}
\begin{align*}
\bar{e}_{p,T}:=(\bar{e}_p)_{|T}=
\sum\limits_{j\in\Lambda_T^p}\bar{x}_{p,j}\psi_{j}.
\end{align*}
Following \cite{Verfurth2013}, the local error estimator for pressure can be defined as

\vspace{-10pt}
\begin{align*}
\eta_{L,T}^p=\|\bar{e}_{p,T}\|_{0,T}
=\|\sum\limits_{j\in\Lambda_T^p}\bar{x}_{p,j}\psi_{j}\|_T,\quad T\in \mathcal{T}_m.
\end{align*}
From (\ref{eqn:xu hat}), we can get

\vspace{-10pt}
\begin{align*}
\bar{x}_{u,\ell}=\frac{1}{(D_v)_{\ell,\ell}}(F_{v,\ell}
-\sum\limits_{j=1}^{N_p}B_{\ell,j}\bar{x}_{p,j}),\quad \ell=1,\cdots, N_v.
\end{align*}
Set  $\Lambda_T^v=\{j~\big|~ supp(\varphi_j)\cap T\neq \emptyset,j=1,\cdots,N_v\}$, then

\vspace{-10pt}
\begin{align*}
\bar{\boldsymbol{e}}_{u,T}:=(\bar{\boldsymbol{e}}_u)_{|T}=
\sum\limits_{j\in\Lambda_T^v}\bar{x}_{u,j}\varphi_{j}.
\end{align*}
From (\ref{eqn:local norm D}), the local error estimator for velocity can be defined as

\vspace{-10pt}
\begin{align*}
\eta_{L,T}^v=|\bar{\boldsymbol{e}}_{u,T}|_{D,T}=
\sqrt{\sum\limits_{j\in\Lambda_T^v}|\bar{x}_{u,j}\varphi_{j}|_{1,T}^2},
\quad T\in \mathcal{T}_m.
\end{align*}
The local error estimator for divergence term can be defined as

\vspace{-10pt}
\begin{align*}
\eta_{L,T}^d=\|\nabla\cdot\hat{\boldsymbol{u}}\|_{0,T}.
\end{align*}
Then, the local error estimator can be defined as

\vspace{-10pt}
\begin{align}\label{eqn:local error estimator}
\eta_{L,T}=\sqrt{(\eta_{L,T}^p)^2+(\eta_{L,T}^v)^2+(\eta_{L,T}^d)^2}.
\end{align}

Recall the third error problem (\ref{eqn:the third error problem}) and
the associated norm (\ref{eqn:norm associate with a3}), we define the
global error estimator as

\vspace{-10pt}
\begin{align}\label{eqn:global error estimator}
\eta_G(\mathcal{T}_{m})=\sqrt{\sum\limits_{k\in\mathcal{T}_m}\eta^2_{L,T}}
=\sqrt{\|(\bar{\boldsymbol{e}}_u,\bar{e}_p)\|_D^2+\|\nabla\cdot\hat{\boldsymbol{u}}\|^2}.
\end{align}
Based on Theorem~\ref{theo:diag pressure approx error}, the global
error estimator $\eta_G(\mathcal{T}_{m})$ provides an estimate of the discretization
error $\|(\boldsymbol{u}-\hat{\boldsymbol{u}},p-\hat{p})\|_V$, which is frequently
used to judge the quality of the underlying discretization. The
local error estimator $\eta_{L,T}$ is an estimate of the error on
element $T$. All elements $T\in \mathcal{T}_{m}$ are marked for
refinement, if $\eta_{L,T}$ exceeds the certain tolerance. Denote the set of all marked
elements by $\mathcal{S}_{m}\subset \mathcal{T}_m$ . The global error estimator associated with
$\mathcal{S}_{m}$ is denoted by $\eta_G(\mathcal{S}_{m})$.

The algorithm of adaptive FEM is listed in
Algorithm~\ref{alg:adaptive FEM}.

\vspace{-10pt}
\begin{algorithm}[h]
\caption{Adaptive FEM}\label{alg:adaptive FEM}
\begin{flushleft}
{\bf Input}: Construct an initial mesh $\mathcal{T}_{0}$. Choose a
parameter $0<\theta<1$ and a tolerance $\varepsilon$.

{\bf Output}: Final triangulation $\mathcal{T}_{M}$ and the
finite element approximation $(\hat{\boldsymbol{u}}_{M},\hat{p}_M)$ on $\mathcal{T}_{M}$.

Set $m=0$ and  $\eta_G(\mathcal{T}_{m})=1$.

{\bf While} $\eta_G(\mathcal{T}_{m})>\varepsilon$
\begin{itemize}
\item [1.] (\verb"SOLVE") Solve the FEM scheme
(\ref{eqn:approximation problem}) on $\mathcal{T}_m$.

\item [2.] (\verb"ESTIMATE") Compute the local error estimator
as defined in (\ref{eqn:local error estimator}) for all elements $T\in\mathcal{T}_{m}$.

\item [3.] (\verb"MARK") Construct a subset
$\mathcal{S}_{m}\subset \mathcal{T}_m$ with least number of elements
such that
\begin{align*}
\eta_G^2(\mathcal{S}_{m})\geq \theta \eta_G^2(\mathcal{T}_{m}).
\end{align*}

\item [4.] (\verb"REFINE") Refine elements in $\mathcal{S}_{m}$
together with the elements, which must be refined to make
$\mathcal{T}_{m+1}$ conforming.

\item [5.] Set $m=m+1$.
\end{itemize}

{\bf End}

Set $(\hat{\boldsymbol{u}}_{M},\hat{p}_M)=(\hat{\boldsymbol{u}}_m,\hat{p}_m)$ and $\mathcal{T}_M=\mathcal{T}_m$.

\end{flushleft}

\end{algorithm}

In the \verb"MARK" step, we adopt the D{\"o}rfler marking strategy
which is a mature strategy and is widely used in the adaptive algorithm
\cite{Dorfler1996}. Recently, it has been shown that D{\"o}rfler marking with minimal
cardinality is a linear complexity problem
\cite{Pfeiler2020}. In this marking strategy the local error
estimators $\{\eta_{L,T}\}_{T\in\mathcal{T}_m}$ are sorted in
descending order. The sorted local error estimators are denoted by
$\{\widetilde{\eta}_{L,T}\}_{T\in\mathcal{T}_m}$. Then, the set of
elements marked for refinement is given by
$\{\tilde{\eta}_{L,T}\}_{T\in\mathcal{S}_m}$, where $\mathcal{S}_m$
contains the least number of elements such that

\vspace{-10pt}
\begin{align*}
\eta_G^2(\mathcal{S}_{m})=\sum_{T\in\mathcal{S}_m}\widetilde{\eta}^2_{L,T}\geq \theta \eta^2_G(\mathcal{T}_{m}).
\end{align*}
Generally speaking, a small value of
$\theta$ leads to a small set $\mathcal{S}_m$,
while a large value of $\theta$ leads to a large set
$\mathcal{S}_m$. In \cite{Dorfler1996}, $\theta$ is suggested to be adopted in $[0.5, 0.8]$. We emphasize that many auxiliary
elements are refined to eliminate the hanging nodes, which
may have been created in the \verb"MARK" step. There are many mature
toolkits to process the hanging nodes \cite{Verfurth2013}.

Finally, to show the effectiveness of the global error
estimator defined in (\ref{eqn:global error estimator}), we
introduce the effective index as follows

\vspace{-10pt}
\begin{align}\label{eqn:effective index}
\kappa_{eff}=\frac{\eta_G(\mathcal{T}_{m})}{\|(\boldsymbol{u}-\hat{\boldsymbol{u}},p-\hat{p})\|_V},
\end{align}
which is the ratio between the global error estimator and the FEM approximation error. According to Theorem \ref{theo:diag pressure approx error}, the effective index is bounded from both above and
below.

\section{Numerical experiments}
\label{sec:Numerical experiments} In this section, we present
two-dimensional numerical examples to demonstrate the efficiency and reliability of
our adaptive FEM. All these simulations have been implemented on a
3.2GHz quad-core processor with 16GB RAM by Matlab.\\

\noindent{\bf Example 1.} This example is taken from page 113 in
\cite{Verfurth1996}. The solution is singular at the origin.
Let $\Omega$ be the L-shape domain $(-1,1)^2 \backslash[0,1)\times
(-1,0]$, and select $\boldsymbol{f}=0$. Then, use $(r,
\varphi)$ to denote the polar coordinates. We impose an appropriate
inhomogeneous boundary condition for $\boldsymbol{u}$ so that

\vspace{-10pt}
\begin{align*}
u_1(r,\varphi)&=r^\lambda((1+\lambda)sin(\varphi)\Psi(\varphi)
+cos(\varphi)\Psi^{\prime}(\varphi)),\\
u_2(r,\varphi)&=r^\lambda(sin(\varphi)\Psi^{\prime}(\varphi)
-(1+\lambda)cos(\varphi)\Psi(\varphi)),\\
p(r,\varphi)&=-r^{\lambda-1}[(1+\lambda)^2\Psi^{\prime}(\varphi)
+\Psi^{\prime\prime\prime}]/(1-\lambda),
\end{align*}
where

\vspace{-10pt}
\begin{align*}
\Psi(\varphi)&= \sin ((1+\lambda) \varphi) \cos (\lambda \omega)
/(1+\lambda)-\cos ((1+\lambda) \varphi) -\sin ((1-\lambda) \varphi)
\cos (\lambda \omega) /(1-\lambda)+\cos ((1-\lambda) \varphi),\\
\omega&= \frac{3 \pi}{2}.
\end{align*}
The exponent $\lambda$ is the smallest positive solution of

\vspace{-10pt}
\begin{align*}
\sin (\lambda \omega)+\lambda \sin (\omega)=0,
\end{align*}
thereby, $\lambda\approx 0.54448373678246$.

We emphasize that $(\boldsymbol{u},p)$ is analytic in $\overline{\Omega}\backslash\{0\}$, but both $\nabla \boldsymbol{u}$ and $p$ are singular at the origin; indeed, here $\boldsymbol{u}\notin [H^2(\Omega)]^2$ and $p\notin H^1(\Omega)$. This example reflects the typical (singular) behavior that solutions of the two-dimensional Stokes problem exhibit in the vicinity of reentrant corners in the computational domain.

We denote the finite element spaces by $V_1$ and $W_{3}$ in the approximation problem and the error problem, respectively. The finite element  space
$V_1$ consists of velocity space and pressure space. The
velocity space is the space of continuous piecewise quadratic
polynomials and the pressure space is the space of continuous
piecewise linear polynomials associated with $\mathcal{T}$. It is characterized in terms of
Lagrange basis.  The
hierarchical basis of any component with respect to velocity in any element
$T\in\mathcal{T}$ will be

\vspace{-10pt}
\begin{align*}
&\lambda_1\lambda_2\lambda_3, \lambda_2^2\lambda_3, \lambda_2\lambda_3^2, \lambda_1^2\lambda_3, \lambda_1\lambda_3^2, \lambda_1^2\lambda_2, \lambda_1\lambda_2^2,
\lambda_2^2\lambda_3^2, \lambda_1^2\lambda_3^2, \lambda_1^2\lambda_2^2, \lambda_1^2\lambda_2\lambda_3, \lambda_1\lambda_2^2\lambda_3, \lambda_1\lambda_2^2\lambda_3,
\lambda_1\lambda_2\lambda_3^2.
\end{align*}
The
hierarchical basis of pressure in any element
$T\in\mathcal{T}$ will be $\lambda_1\lambda_2\lambda_3$,
where $\lambda_i (i=1,2 ,3)$ are
Lagrange bases of the three vertices in $T$, respectively. These bases of $W_3$ in any element are shown in Figure~\ref{fig:basis ex1}.

\vspace{-10pt}
\begin{figure}[htbp]
  \centering
  \includegraphics[width=0.30\textwidth]{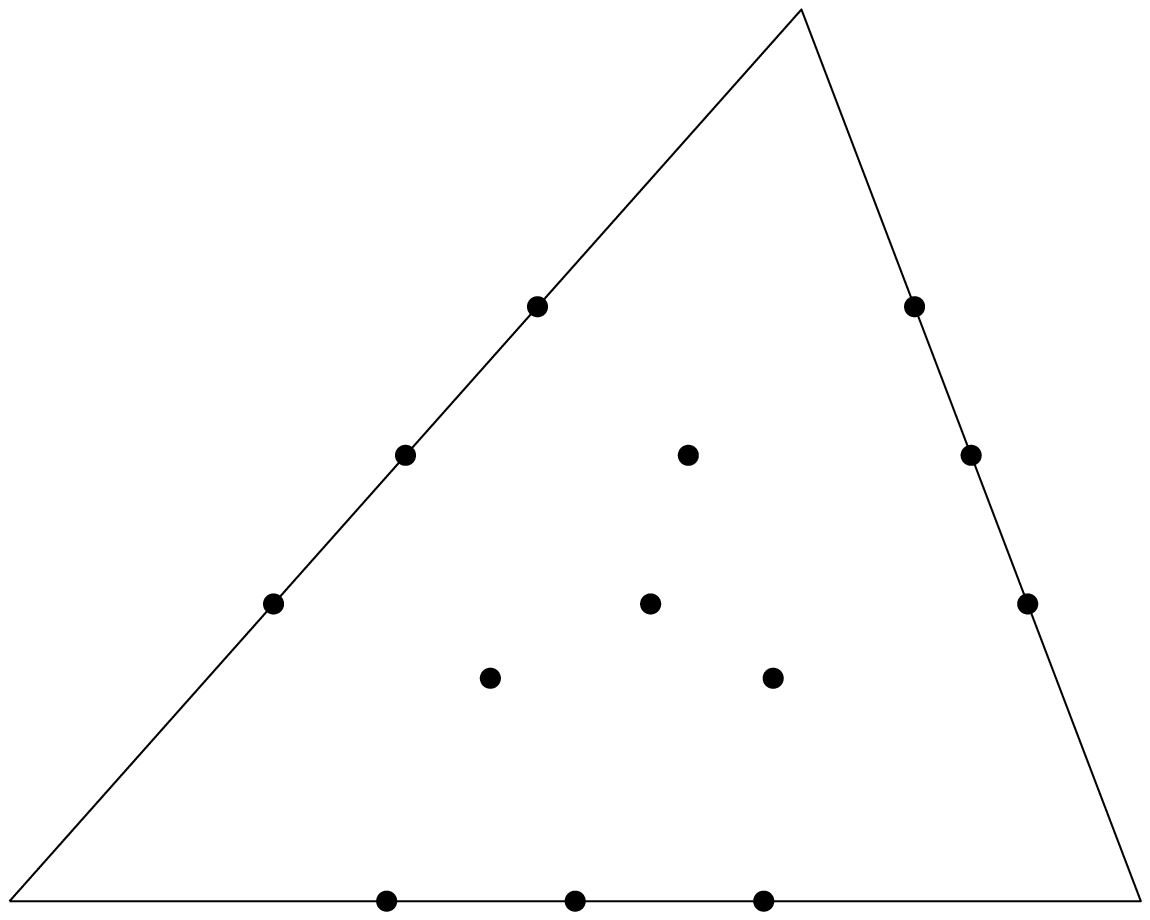}
  \includegraphics[width=0.30\textwidth]{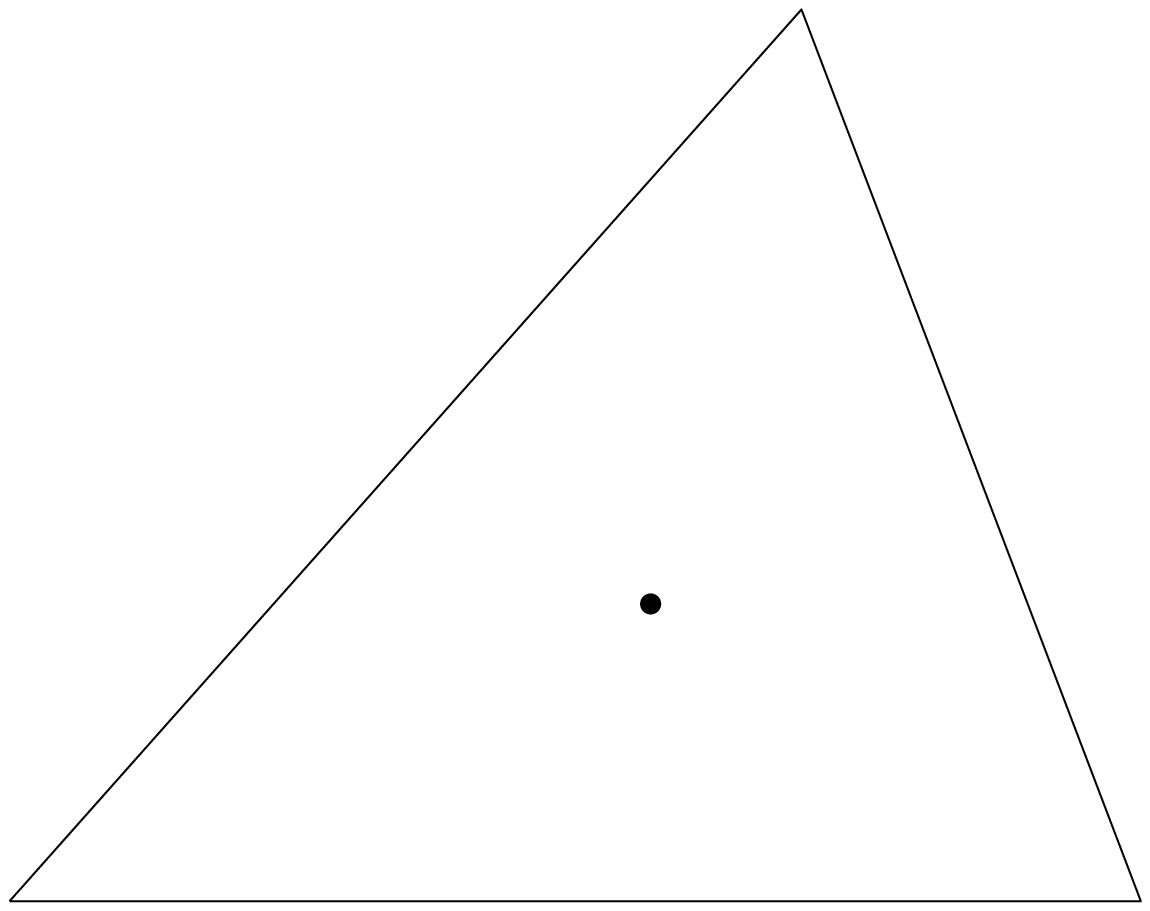}
\caption{The basis of velocity(left) and pressure(right) in any element for Example 1.}
 \label{fig:basis ex1}
\end{figure}

Figure~\ref{fig:convergence order ex1}(a) shows that for Example 1,
$\eta_G(\mathcal{T}_m)$ has different convergent rates with respect
to the degrees of freedom ($d.o.f$) for different $\theta$. Table
\ref{tab:computation cost ex1 FEM} shows the computation cost for
different $\theta$ when
$\|(\boldsymbol{u}-\hat{\boldsymbol{u}},p-\hat{p})\|_{V}<0.25$. From the comparison
in Table  \ref{tab:computation cost ex1 FEM}, adaptive FEM is
much faster than the uniform refinement. In the \verb"MARK" step, we
set the D\"{o}fler parameter as $\theta=0.7$. In the \verb"REFINE"
step, the refinement process is implemented using the MATLAB
function REFINEMESH. The key is dividing the marked element into four
parts by regular refinement (dividing all edges of the selected triangles in half).

Figure~\ref{fig:convergence order ex1}(b) shows the
convergent rates of $\|(\boldsymbol{u}-\hat{\boldsymbol{u}},p-\hat{p})\|_V$ and $\eta_G(\mathcal{T}_{m})$ for
Algorithm~\ref{alg:adaptive FEM}. The $x$-axes denotes the
$d.o.f$ in log scale, while $y$-axes
denotes the errors in log scale.
The squared line denotes the error $\|(\boldsymbol{u}-\hat{\boldsymbol{u}},p-\hat{p})\|_V$ of
the uniform refinement. The asterisk line and the circled line denote the
error $\|(\boldsymbol{u}-\hat{\boldsymbol{u}},p-\hat{p})\|_V$ and $\eta_G(\mathcal{T}_{m})$ of adaptive FEM,
respectively. It is obvious that
$\|(\boldsymbol{u}-\hat{\boldsymbol{u}},p-\hat{p})\|_V$ and $\eta_G(\mathcal{T}_m)$ have the same convergence
order and have a higher
convergence order than the uniform refinement.

\begin{figure}[htbp]
  \centering
  \includegraphics[width=0.45\textwidth]{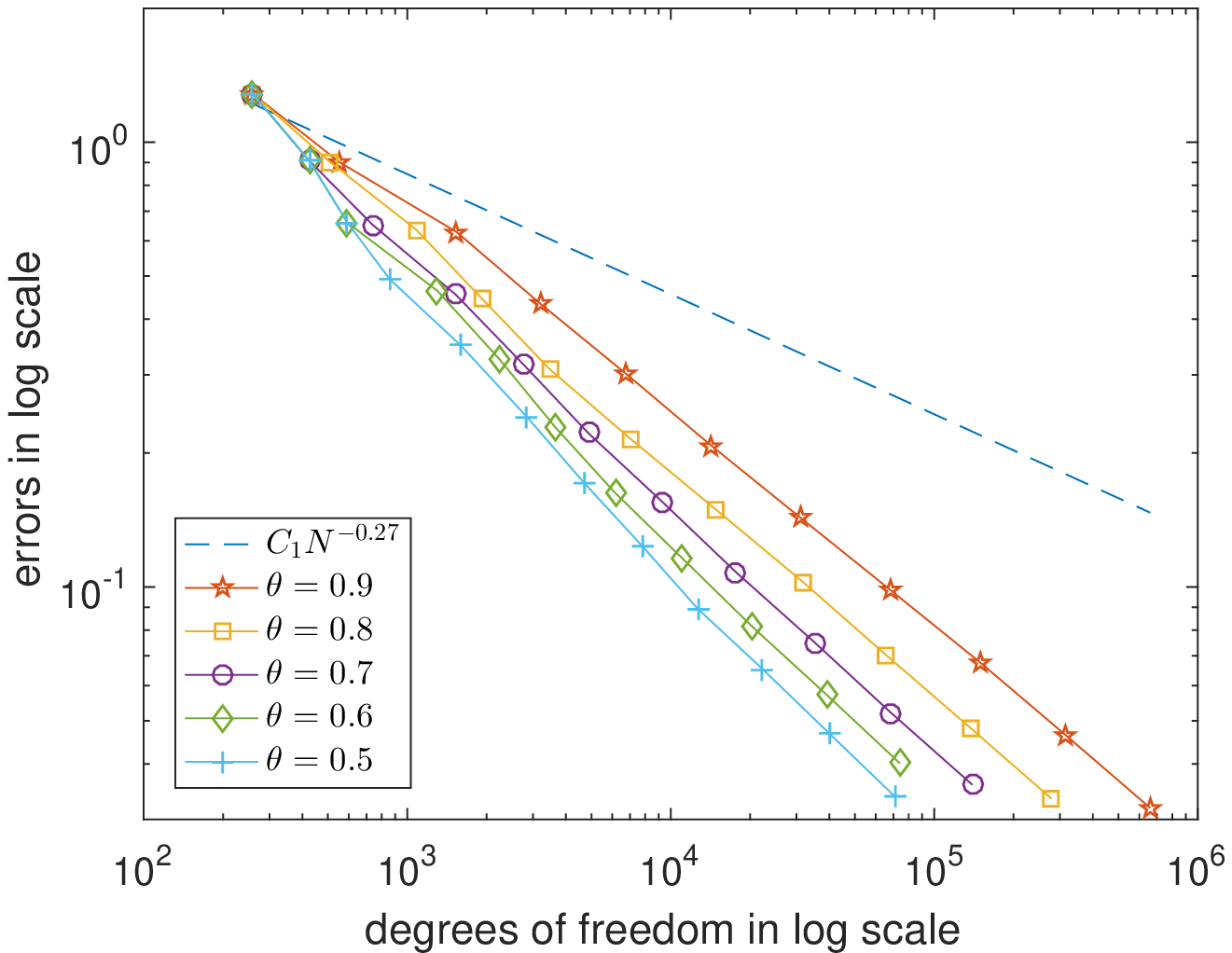}
  \includegraphics[width=0.45\textwidth]{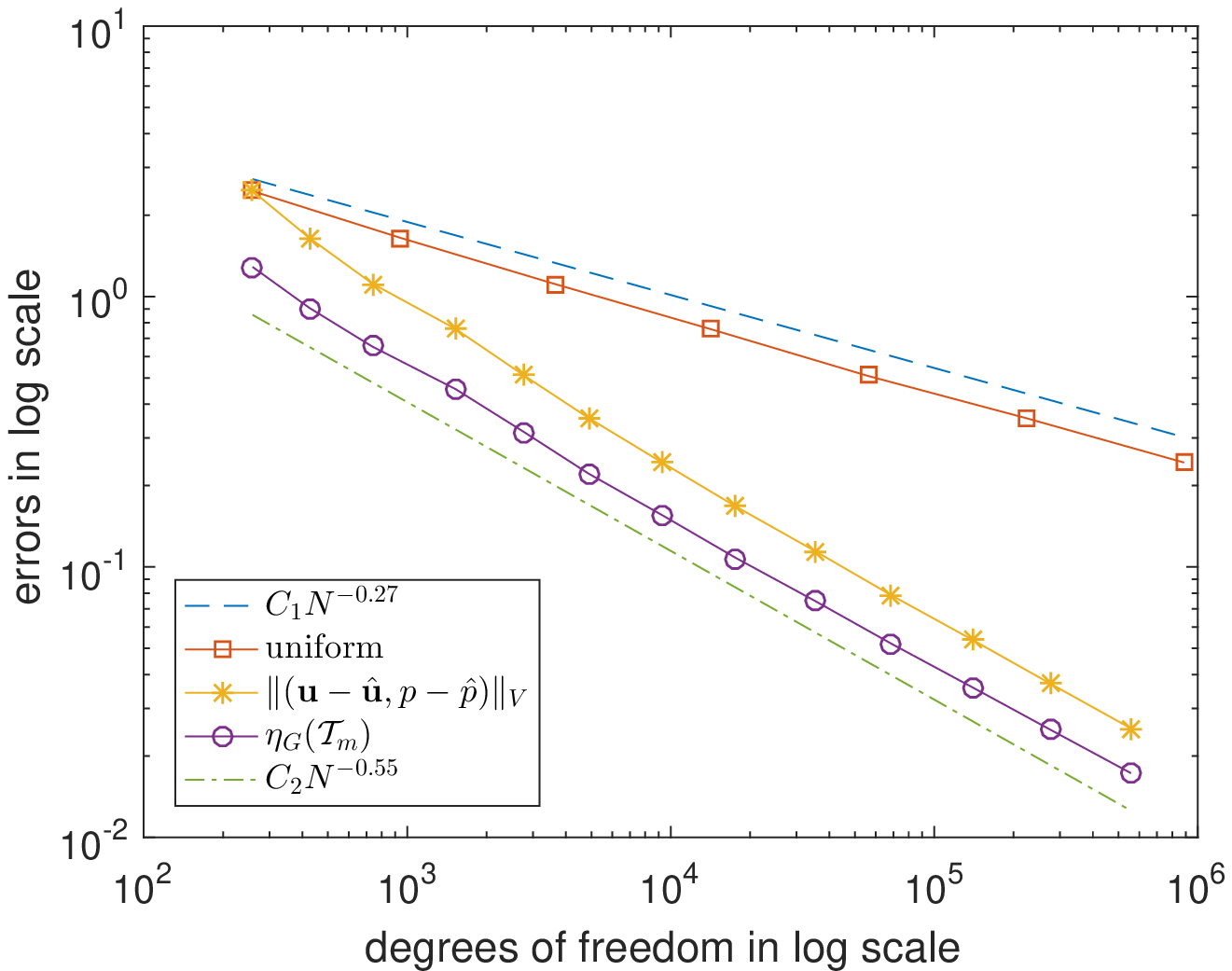}
    \hspace{88mm} (a)    \hspace{70mm} (b)
\caption{The convergent rates of adaptive FEM for Example 1.}
 \label{fig:convergence order ex1}
\end{figure}

\begin{table}[htbp]
\caption{Computation cost for different $\theta$ in Example 1.} \label{tab:computation cost ex1 FEM}
\begin{tabular*}{\hsize}{@{}@{\extracolsep{\fill}}ccccc@{}}
    \hline
   $\theta$     & refinement steps & $d.o.f$          &$\|(\boldsymbol{u}-\hat{\boldsymbol{u}},p-\hat{p})\|_{V}$ &  time(s) \\
    \hline
        $0.9$    & $7$           & $31063$   &  $0.243$    &  $6.533$  \\

       $0.8$    & $7$            & $14935$   &  $0.243$    &  $4.339$  \\

       $0.7$    & $7$            & $9250$   &  $0.244$    &  $3.299$  \\

       $0.6$    & $7$            & $6148$   &  $0.244$    &  $3.018$  \\

       $0.5$    & $7$            & $4738$   &  $0.245$    &  $2.308$  \\

    \hline
       $uniform$    & $7$            & $887299$  &  $0.243$     &  $130.463 $  \\
    \hline
\end{tabular*}
\end{table}

Table~\ref{tab:ex1 FEM} shows the error
$\|(\boldsymbol{u}-\hat{\boldsymbol{u}},p-\hat{p})\|_V$, the global error estimator
$\eta_G(\mathcal{T}_{m})$, and the effective index $\kappa_{eff}$ of
the adaptive FEM as the $d.o.f$ increases. The results
of effective index $\kappa_{eff}$ defined in (\ref{eqn:effective
index}) are shown in the sixth column. The effective index
$\kappa_{eff}$ is between $0.5$ and $0.7$ with adaptive refinement, which
shows the adaptive FEM is reliable. Figure~\ref{fig:Mesh and error
ex1} shows the initial mesh with $d.o.f=259$, fourth refinement mesh with
$d.o.f=2778$, and seventh refinement mesh with $d.o.f=17707$. It
has inserted refinement elements around the singularity at $(x,y)=(0,0)$ as $d.o.f$
increases to reduce the global error.

\begin{table}[htbp]
\centering \caption{The errors, the global error estimator, and the
effective index of the adaptive FEM in Example 1.} \label{tab:ex1
FEM}
\begin{tabular*}{\hsize}{@{}@{\extracolsep{\fill}}cccccc@{}}
    \hline
   $d.o.f$          &  $\|(\boldsymbol{u}-\hat{\boldsymbol{u}},p-\hat{p})\|_{V}$ & order   &  $\eta_G(\mathcal{T}_{m})$ & order & $\kappa_{eff}$\\
    \hline
      $259$   &  $2.452$E0      &   ---   &  $1.286$E0  &   ---  &
      0.524 \\

      $426$    &  $1.644$E0    &    $0.803$    &  $9.068$E-1  &   $0.702$ &
      0.551 \\

      $738$    &  $1.116$E0    &    $0.705$    &  $6.526$E-1  &   $0.598$ & 0.584\\

     $1523$    &  $7.595$E-1    &    $0.531$    &  $4.550$E-1  &   $0.497$ & 0.599\\

     $2778$    &  $5.101$E-1    &    $0.662$    &  $3.151$E-1  &   $0.611$& 0.617\\

     $4871$    &  $3.559$E-1   &    $0.640$    &  $2.215$E-1  &   $0.627$  &0.622\\

     $9250$    &  $2.436$E-1   &    $0.590$    &  $1.557$E-1  &   $0.549$  &0.639\\

     $17707$    &  $1.670$E-1   &    $0.581$    &  $1.078$E-1  &   $0.565$ &0.645\\

    $35236$    &  $1.145$E-1   &    $0.547$    &  $7.492$E-2  &   $0.529$  &0.653\\

    $68949$    &  $7.846$E-2   &    $0.564$    &  $5.194$E-2  &   $0.545$  &0.661\\

    $138420$    &  $5.381$E-2   &    $0.535$    &  $3.590$E-2  &   $0.524$ &0.667\\

    $277820$    &  $3.687$E-2   &    $0.548$    &  $2.492$E-2  &   $0.529$  &0.675\\

    $557663$   &  $2.529$E-2     &   $0.540$   &  $1.728$E-2  &   $0.525$  & 0.683 \\
    \hline
\end{tabular*}
\end{table}

\begin{figure}[htbp]
  \centering
  \includegraphics[width=0.30\textwidth]{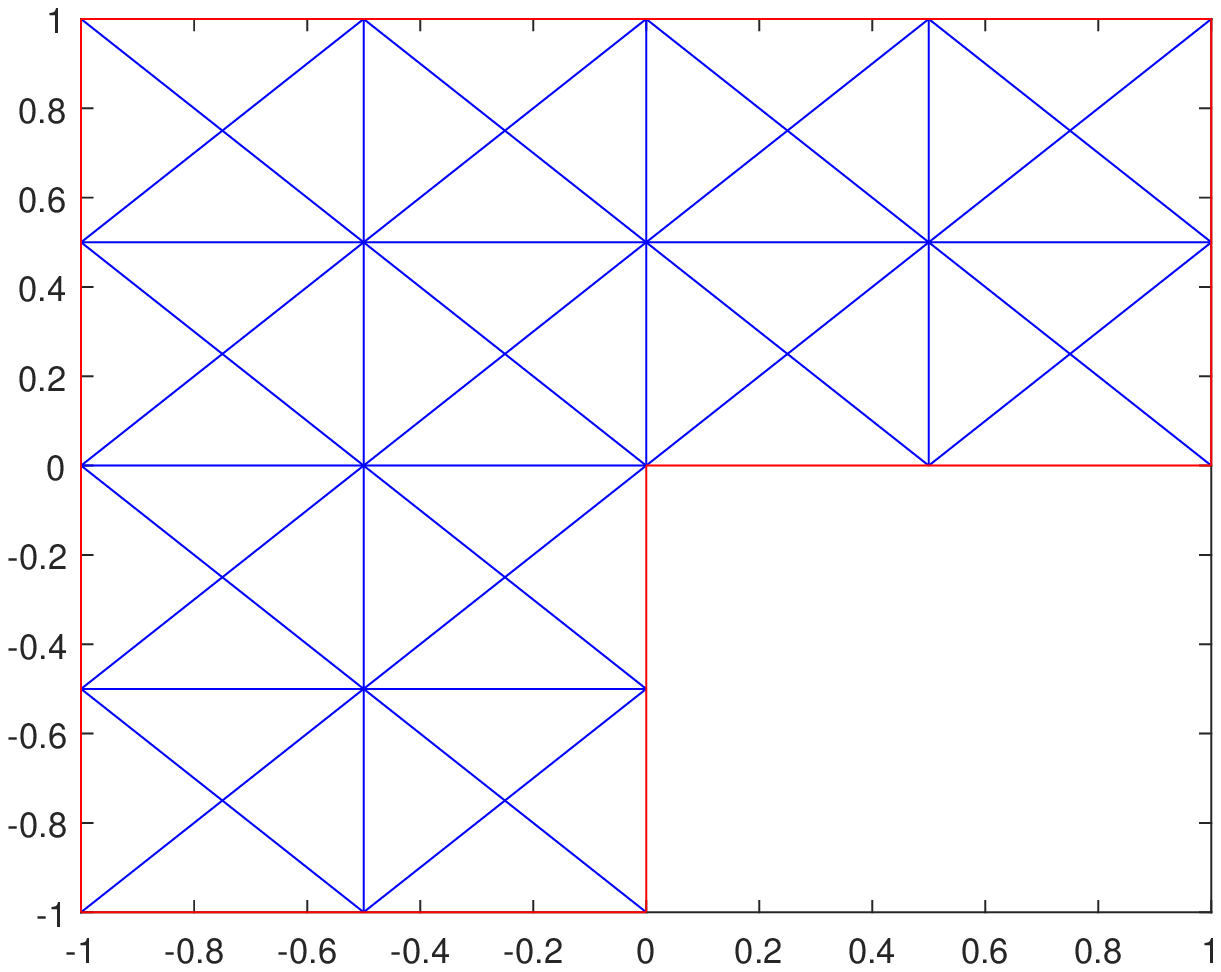}
  \includegraphics[width=0.30\textwidth]{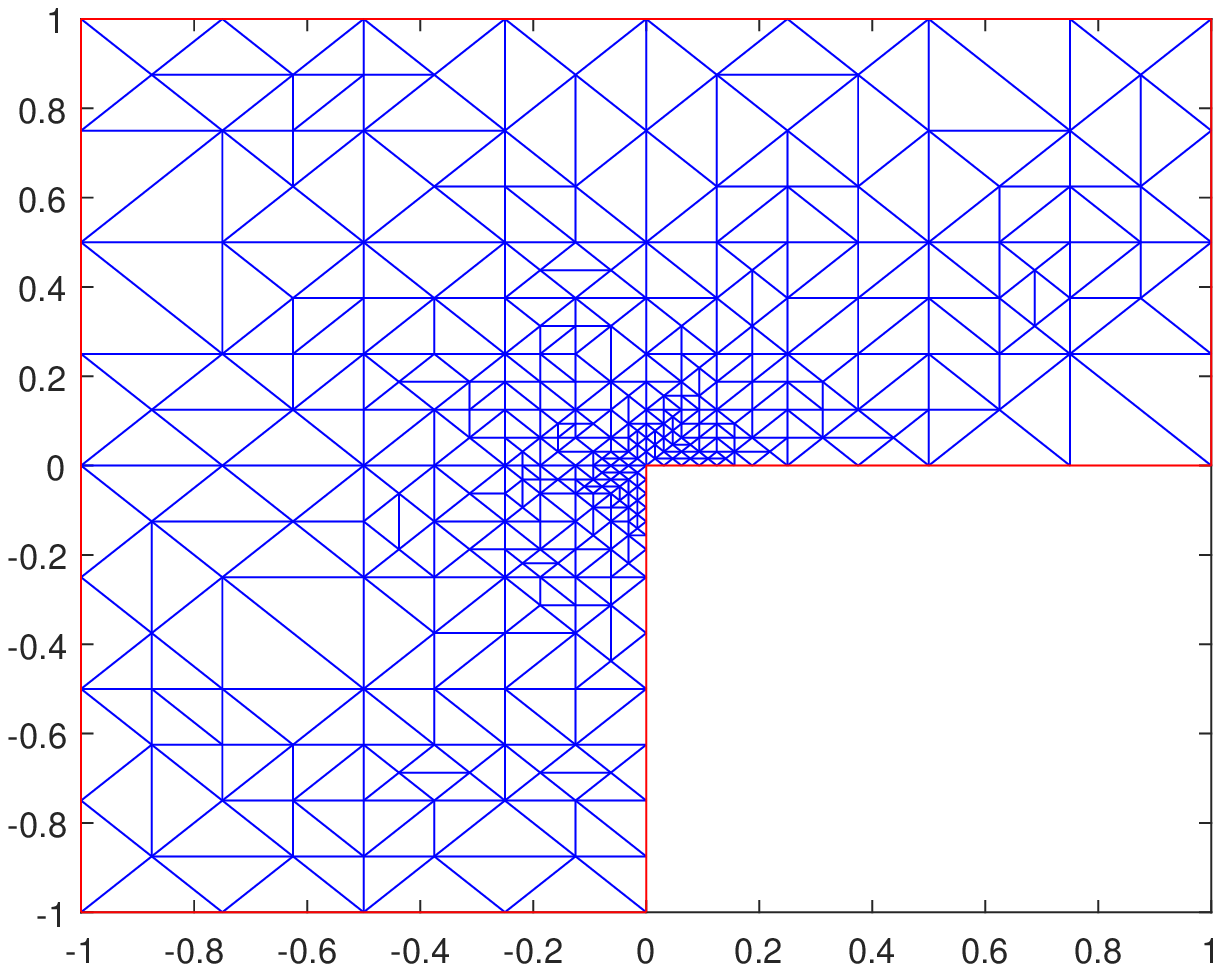}
  \includegraphics[width=0.30\textwidth]{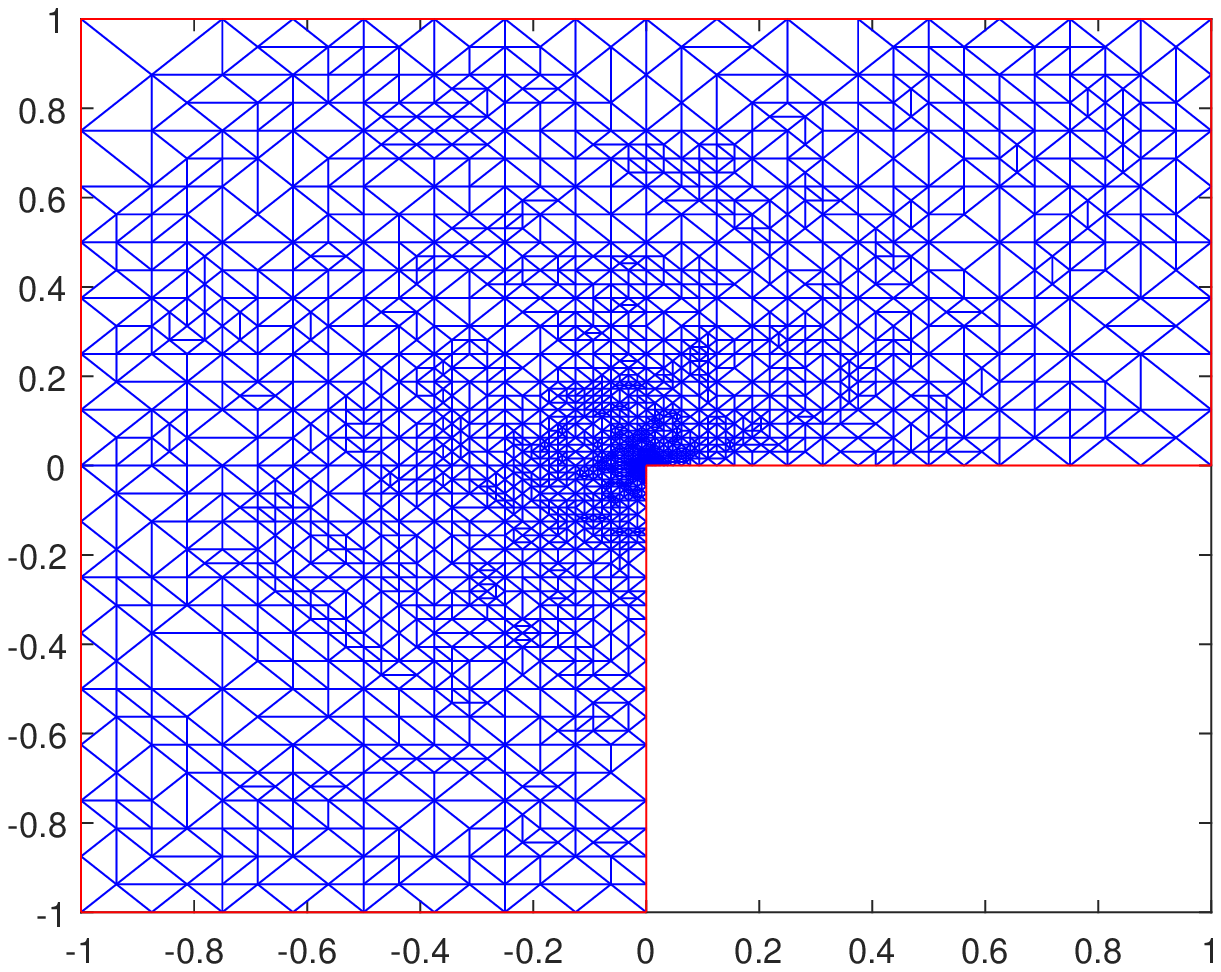}\\
\caption{The meshes with
$d.o.f=259$ (left), $2778$ (middle), and $17707$ (right) for Example 1.}
 \label{fig:Mesh and error ex1}
\end{figure}

\noindent{\bf Example 2.} In this case, we test the lid-driven cavity problem. The domain is taken as the square $\Omega=(0,1)\times(0,1)$, we set $\boldsymbol{f}=\boldsymbol{0}$, and the boundary conditions $\boldsymbol{u}=0$ on $[\{0\}\times(0,1)]\cup[(0,1)\times\{0\}]\cup[\{1\}\times(0,1)]$ and $\boldsymbol{u}=(1,0)^T$ on $(0,1)\times\{1\}$. This problem has a corner singularity. The tangential component of velocity $\boldsymbol{u}\cdot\boldsymbol{\tau}$ has a discontinuity at the two top corners, where $\boldsymbol{\tau}$ denotes the unit tangential vector on the boundary. We use the proposed adaptive FEM algorithm to solve this problem. The finite element space, D\"{o}fler parameter, and refinement criterion are the same as Example 1. Figure~\ref{fig:mesh ex2} shows that the refinement of mesh focuses on the two top corners. In Figure~\ref{fig:pressure ex2}, we depict the discrete pressure field obtained using the initial and adapted meshes where we note the improvement in the quality of the computed solution since the singular nature of the pressure is better captured in the adapted mesh.

\begin{figure}[htbp]
  \centering
  \includegraphics[width=0.40\textwidth]{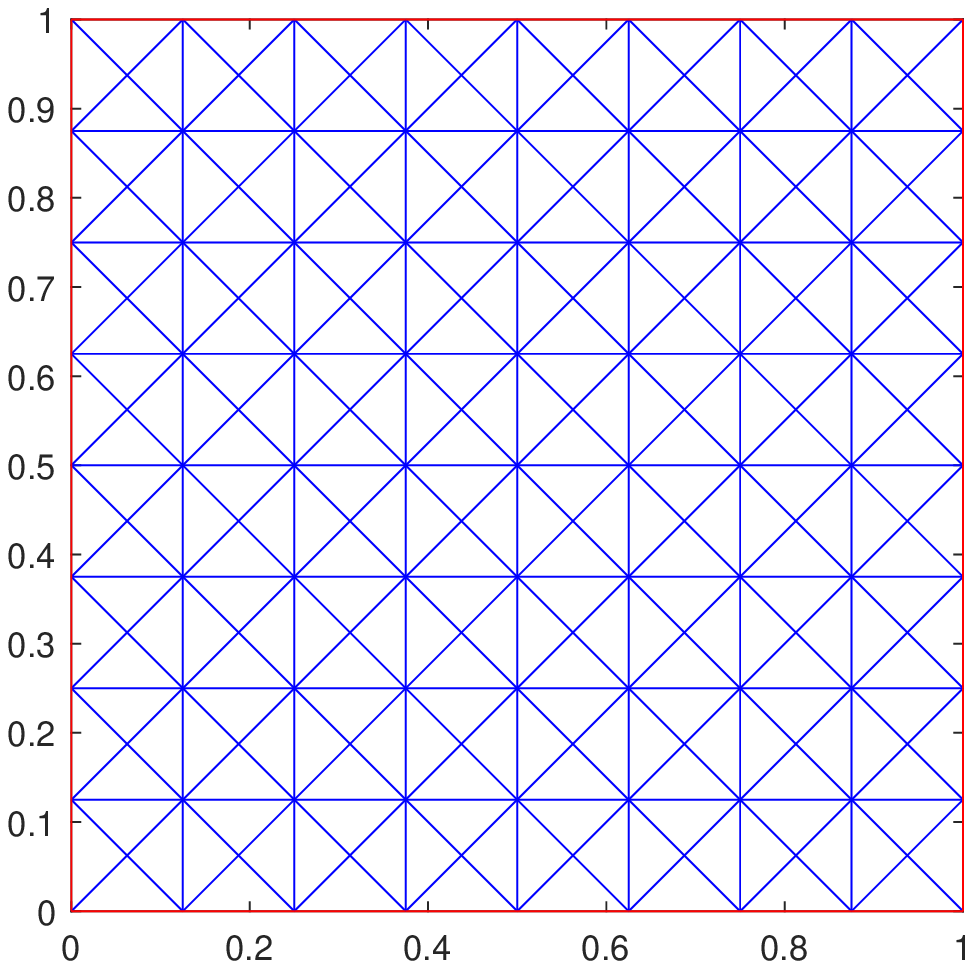}
  \includegraphics[width=0.40\textwidth]{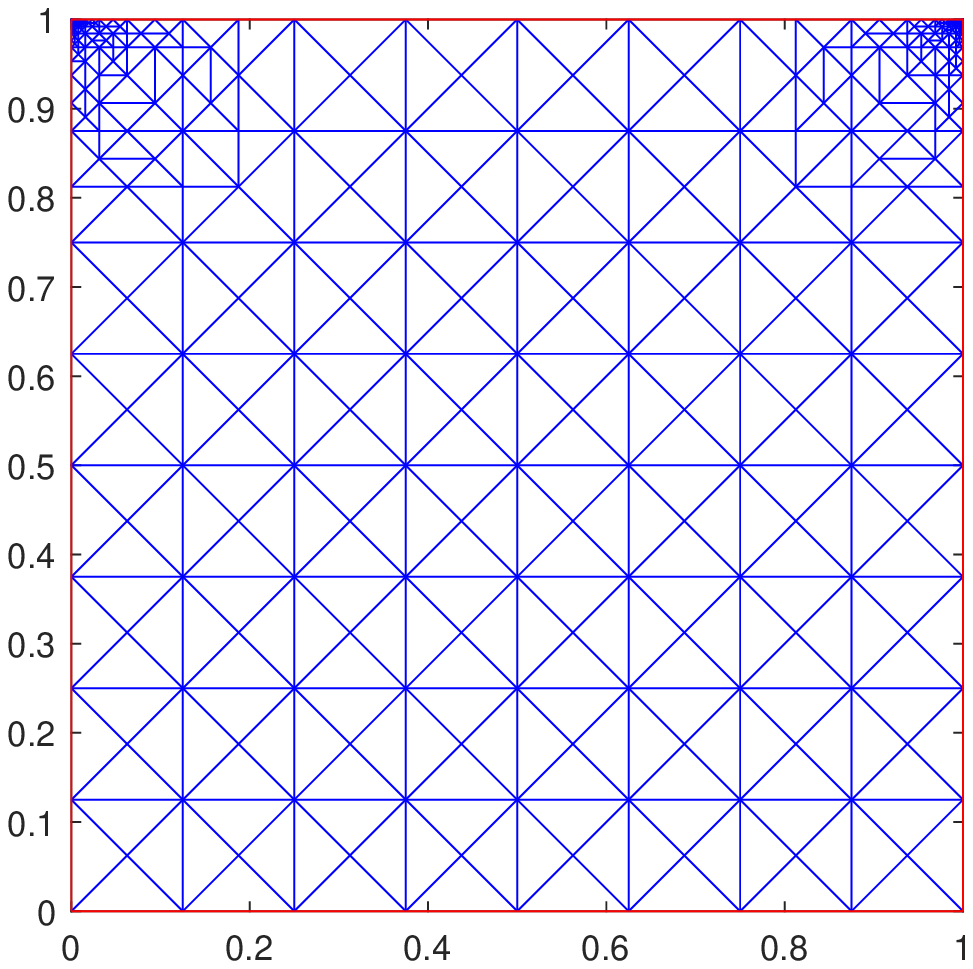}\\
\caption{The initial mesh and tenth refinement mesh for Example 2.}
 \label{fig:mesh ex2}
\end{figure}

\begin{figure}[htbp]
  \centering
  \includegraphics[width=0.40\textwidth]{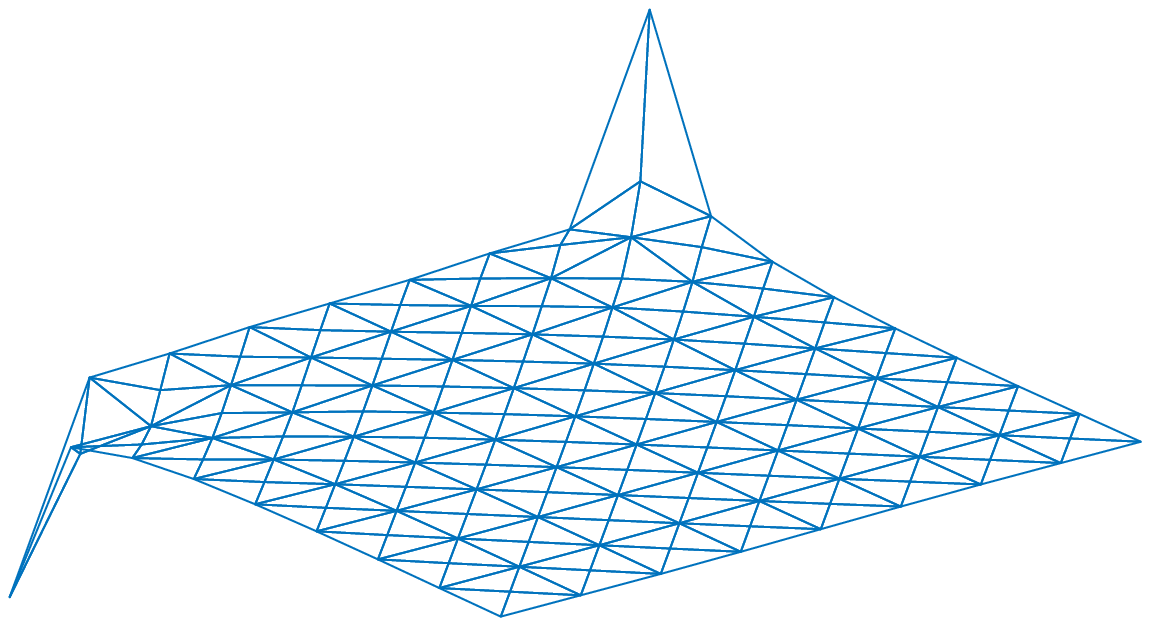}
  \includegraphics[width=0.40\textwidth]{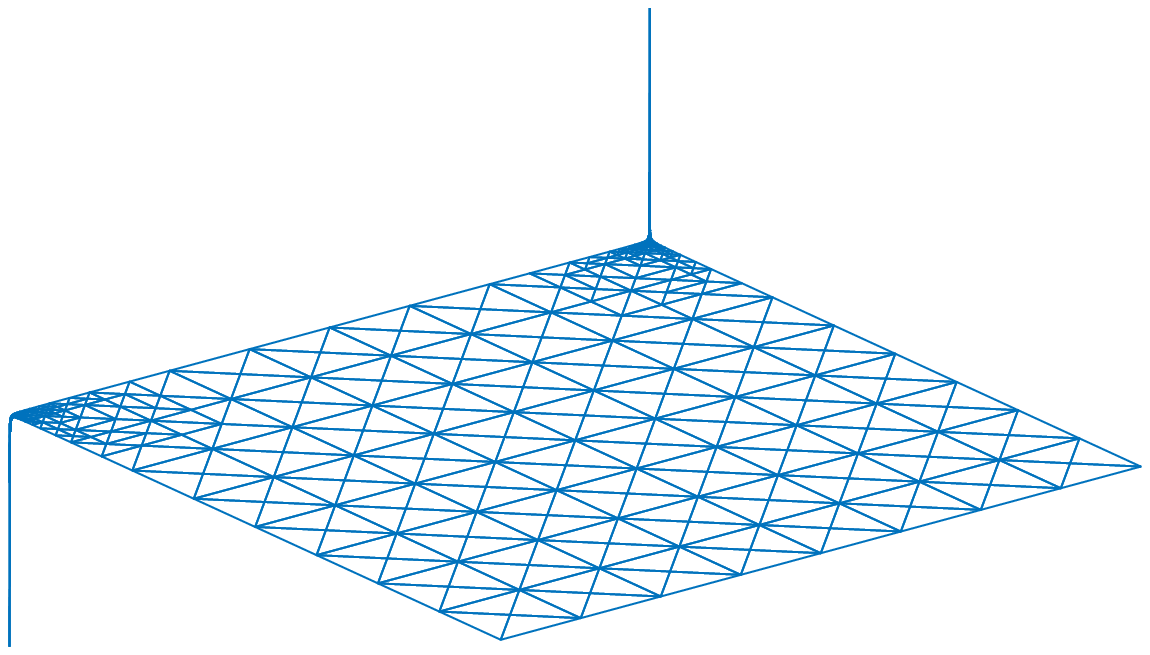}\\
\caption{The pressures in initial mesh and tenth refinement mesh for Example 2.}
 \label{fig:pressure ex2}
\end{figure}

\section{Conclusion}\label{sec:Conclusion}
In this paper, we present an adaptive FEM for solving the Stokes
problem with Dirichlet boundary condition.  Based on auxiliary subspace techniques, we proposed a hierarchical basis \textit{a posteriori} error estimator, which is most efficient and robust. We need to solve only two global diagonal linear systems. In theory,  The estimator is proved
to have global upper and lower bounds without saturation assumption. Numerical experiments are
shown to illustrate the efficiency and reliability of our adaptive
algorithm.

\section*{Acknowledgments}
The work of J.C. Zhang was supported by the Natural Science Foundation of Jiangsu Province (grant no.BK20210540)
, the Natural Science Foundation of the Jiangsu Higher Education Institutions of China (grant no.21KJB110015). The work of R. Zhang was supported by the National Key Research and Development Program of China (grant no.2020YFA0713601).

\section*{Declarations}
\textbf{Conflict of interest} The authors have no conflicts of interest to declare.\\
~\\
\textbf{Data Availability} Data sharing is not applicable to this article as no datasets were generated or analysed during the current study.

\section*{Appendix A.}
\setcounter{equation}{0}
\renewcommand\theequation{A.\arabic{equation}}
\renewcommand\thefigure{A.\arabic{figure}}
\setcounter{figure}{0}
\textit{The proof of Lemma~\ref{lem:inf-sup one layer in com space}}.\\
\begin{proof}
The idea of proof is similar to \cite{BOFFI1994} for $d=2$ and \cite{BOFFI1997} for $d=3$. Next, we will give proofs for $d=2$ and $d=3$, respectively.

\textbf{Case $d=2$}: The idea is to consider a macroelement partition of the domain $\Omega$ in such a way that each macroelement contains exactly three triangles. By virtue of Remark 3.3 in \cite{BOFFI1994}, it suffices to prove the inf-sup condition for only one macroelement. We consider a macroelement $\Omega_i=a\cup b\cup c$ as in Figure~\ref{fig:2d_inf_sup}

\begin{figure}[htbp]
  \centering
  \includegraphics[width=0.4\textwidth]{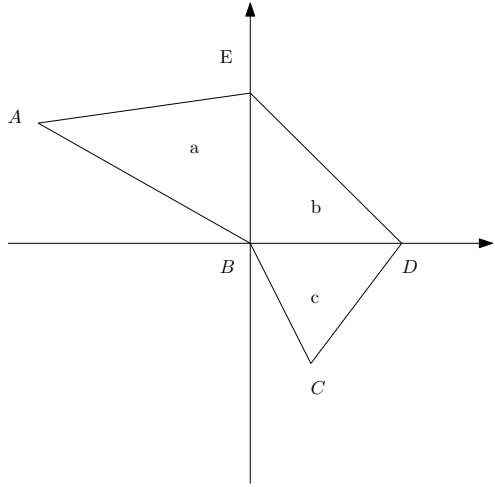}
\caption{The macroelement partition containing three triangles.}
 \label{fig:2d_inf_sup}
\end{figure}

Let us introduce some notations. We denote by $\lambda_{AB}^a$ by the barycentric coordinate related to the triangle a, which vanishes on the edge $AB$ (analogous notations for the other cases). we denote by $L_{i,x}^a$ the $i$-th Legendre polynomial with respect to the measure $\mu_{a,x}$ defined as

\vspace{-10pt}
\begin{align}
\label{eqn:measure a}
\int_{x_A}^0 f(x)d\mu_{a,x}=\int_{a}\lambda_{AB}^a\lambda_{AE}^af(x)dxdy\quad \forall f(x):[x_A,0]\rightarrow \mathbb{R},
\end{align}
where $x_A$ is the $x$-coordinate of the vertex $A$. A similar definition will hold for $L_{i,y}^c$ using $\lambda_{BC}\lambda_{CD}$. On the triangle $b$ we shall use both $L_{i,x}^b$ (using $\lambda_{ED}\lambda_{BD}$) and $L_{i,y}^b$ (using $\lambda_{BE}\lambda_{ED}$). These Legendre polynomials are defined up to a constant factor so that we can normalize them by imposing that they assume the same value at the origin. This is possible by virtue of Proposition 2.1 in \cite{BOFFI1994}.

Our approach to the stability condition will be related to the modified inf-sup condition that can be written as

\vspace{-10pt}
\begin{align*}
\sup_{\hat{\boldsymbol{v}}\in \overline{WV}_{k+j+1}}\frac{b(\hat{\boldsymbol{v}},\hat{q})}{\| \hat{\boldsymbol{v}}\|}\geq\mu\|\nabla \hat{q}\|,\quad \forall \hat{q}\in
\overline{P}_{k+j},
\end{align*}
which implies the standard one \cite{Verfurth1984}.

For every fixed $\hat{q}\in \overline{P}_{k+j}$ we want to construct $\hat{\boldsymbol{v}}\in \overline{WV}_{k+j+1}$ such that

\vspace{-10pt}
\begin{align}\label{eqn:inf-sup condition 1}
-\int_{\Omega}\hat{\boldsymbol{v}}\cdot\nabla\hat{q}dx&\geq c_1\|\nabla\hat{q}\|^2,\\
\label{eqn:inf-sup condition 2}
\|\hat{\boldsymbol{v}}\|_{0,\Omega}&\leq c_2\|\nabla\hat{q}\|.
\end{align}

Define:

\vspace{-10pt}
\begin{align*}
&\hat{\boldsymbol{v}}(x,y)=(\hat{v}_1(x,y),\hat{v}_2(x,y)),\\
&\hat{v}_1(x,y)_{|_a}=-\lambda_{AB}^a\lambda_{AE}^a\|\nabla \hat{q}\|L_{k-1,x}^a\cdot sign(H_a),\\
&\hat{v}_2(x,y)_{|_a}=-\lambda_{AB}^a\lambda_{AE}^a\frac{\partial \hat{q}}{\partial y},\\
&\hat{v}_1(x,y)_{|_b}=-\lambda_{ED}^b\lambda_{BD}^b\|\nabla \hat{q}\|L_{k+d-1,x}^b\cdot sign(H_b)
-\lambda_{ED}^b\lambda_{EB}^b\frac{\partial \hat{q}}{\partial x},\\
&\hat{v}_2(x,y)_{|b}=-\lambda_{EB}^b\lambda_{BD}^b\frac{\partial \hat{q}}{\partial y}
-\lambda_{EB}^b\lambda_{ED}^b\|\nabla \hat{q}\|L_{k+d-1,y}\cdot sign(K_b),\\
&\hat{v}_1(x,y)_{|_c}=-\lambda_{BC}^c\lambda_{CD}^c\frac{\partial \hat{q}}{\partial x},\\
&\hat{v}_2(x,y)_{|_c}=-\lambda_{BC}^c\lambda_{CD}^c\|\nabla \hat{q}\|L_{k+d-1,y}\cdot sign(K_c),
\end{align*}
where $sign(x)$ is sign function defined as

\vspace{-10pt}
\begin{align*}
sign(x)=
\begin{cases}
1,& x>0,\\
0,& x=0,\\
-1,& x<0.
\end{cases}
\end{align*}
and

\vspace{-10pt}
\begin{align*}
&H_a=\int_a\lambda_{AB}^a\lambda_{AE}^aL_{k+d-1,x}\cdot\frac{\partial \hat{q}}{\partial x},\quad H_b=\int_b\lambda_{ED}^b\lambda_{BD}^bL_{k+d-1,x}\cdot\frac{\partial \hat{q}}{\partial x},\\
&K_a=\int_b\lambda_{EB}^b\lambda_{ED}^bL_{k+d-1,y}\cdot\frac{\partial \hat{q}}{\partial y},\quad K_b=\int_c\lambda_{BC}^c\lambda_{CD}^cL_{k+d-1,y}\cdot\frac{\partial \hat{q}}{\partial y}.
\end{align*}

First of all, we observe that $\hat{\boldsymbol{v}}$ is an element of $\overline{WV}_{k+j+1}$, by the virtue of the fact that the tangential components of $\nabla \hat{q}$ along the interface $EB$ and $BD$ are continuous.

It is easy to verify that $\hat{\boldsymbol{v}}$ satisfies (\ref{eqn:inf-sup condition 2}). In order to check the validity of (\ref{eqn:inf-sup condition 1}), define
$|\!|\!|\nabla \hat{q}|\!|\!|=-\int_{\Omega}\hat{\boldsymbol{v}}\cdot\nabla \hat{q}$. Then

\vspace{-10pt}
\begin{align}
\label{eqn:new norm equl zero total}
|\!|\!|\nabla \hat{q}|\!|\!|^2=&\int_a\lambda_{AB}^a\lambda_{AE}^a(\frac{\partial \hat{q}}{\partial y})^2+\|\nabla\hat{q}\|(|H_a|+|H_b|) \notag\\
&+\int_b\big(\lambda_{ED}^b\lambda_{EB}^b(\frac{\partial \hat{q}}{\partial x})^2+\lambda_{ED}^b\lambda_{BD}^b(\frac{\partial \hat{q}}{\partial y})^2\big)\\
&+\|\nabla\hat{q}\|(|K_a|+|K_b|)+\int_c\lambda_{BC}^c\lambda_{CD}^c(\frac{\partial \hat{q}}{\partial x})^2.\notag
\end{align}

We verify that the expression $\|\frac{\partial \hat{q}}{\partial x}\|_H:=|H_a|+|H_b|$ is a norm of $\frac{\partial \hat{q}}{\partial x}$ in $a\cup b$ and $\|\frac{\partial \hat{q}}{\partial y}\|_K:=|K_a|+|K_b|$ is a norm of $\frac{\partial \hat{q}}{\partial y}$ in $b\cup c$.

Step 1. We will show $|H_a|+|H_b|$ vanishes only when $\frac{\partial\hat{q}}{\partial x}$ equals zero. From (\ref{eqn:measure a})

\vspace{-10pt}
\begin{align*}
0=\|\frac{\partial \hat{q}}{\partial x}\|_H=|\int_{x_A}^0L_{k+d-1,x}\cdot\frac{\partial \hat{q}}{\partial x}|+|\int_0^1L_{k+d-1,x}\cdot\frac{\partial \hat{q}}{\partial x}|
\end{align*}
From the orthogonality of Legendre polynomials $L_{i,x}^a, L_{i,x}^b$ and noting that $\frac{\partial\hat{q}}{\partial x}$ is a homogeneous polynomial of degree $k+d-1$, we have $\frac{\partial \hat{q}}{\partial x}=0$ in $a\cup b$.

Step 2. We will get $\|k\frac{\partial \hat{q}}{\partial x}\|_H=|k|\|\frac{\partial \hat{q}}{\partial x}\|_H$ from

\vspace{-10pt}
\begin{align*}
\|k\frac{\partial \hat{q}}{\partial x}\|_H=|\int_{x_A}^0L_{k+d-1,x}\cdot k\frac{\partial \hat{q}}{\partial x}|+|\int_0^1L_{k+d-1,x}\cdot k\frac{\partial \hat{q}}{\partial x}|.
\end{align*}

Step 3.  We will show that $\|\frac{\partial \hat{q}_1}{\partial x}+\frac{\partial \hat{q}_2}{\partial x}\|_H\leq
\|\frac{\partial \hat{q}_1}{\partial x}\|_H+\|\frac{\partial \hat{q}_2}{\partial x}\|_H$.

\vspace{-10pt}
\begin{align*}
\|\frac{\partial \hat{q}_1}{\partial x}+\frac{\partial \hat{q}_2}{\partial x}\|_H&=|\int_{x_A}^0L_{k+d-1,x}\cdot(\frac{\partial \hat{q}_1}{\partial x}+\frac{\partial \hat{q}_2}{\partial x})|+|\int_0^1L_{k+d-1,x}\cdot(\frac{\partial \hat{q}_1}{\partial x}+\frac{\partial \hat{q}_2}{\partial x})|\\
&\leq |\int_{x_A}^0L_{k+d-1,x}\cdot\frac{\partial \hat{q}_1}{\partial x}|
+|\int_{x_A}^0L_{k+d-1,x}\cdot\frac{\partial \hat{q}_2}{\partial x}|
+|\int_0^1L_{k+d-1,x}\cdot\frac{\partial \hat{q}_2}{\partial x}|
+|\int_0^1L_{k+d-1,x}\cdot\frac{\partial \hat{q}_1}{\partial x}|\\
&=\|\frac{\partial \hat{q}_1}{\partial x}\|_H+\|\frac{\partial \hat{q}_2}{\partial x}\|_H.
\end{align*}

Similarly, $\|\frac{\partial \hat{q}}{\partial y}\|_K=|K_b|+|K_c|$ is a norm of $\frac{\partial \hat{q}}{\partial y}$ in $b\cup c$. From the equivalence of norms on a finite dimensional space, there exists a constant $C_a,C_b,C_c,C_H, C_K>0$ such that

\vspace{-10pt}
\begin{align*}
&\int_a\lambda_{AB}^a\lambda_{AE}^a(\frac{\partial \hat{q}}{\partial y})^2\geq C_a\int_a(\frac{\partial \hat{q}}{\partial y})^2,\quad
\|\frac{\partial \hat{q}}{\partial x}\|_H\geq C_H\sqrt{\int_{a}(\frac{\partial \hat{q}}{\partial x})^2+\int_{b}(\frac{\partial \hat{q}}{\partial x})^2},\\
&\int_b\big(\lambda_{ED}^b\lambda_{EB}^b(\frac{\partial \hat{q}}{\partial x})^2+\lambda_{ED}^b\lambda_{BD}^b(\frac{\partial \hat{q}}{\partial y})^2\big)
\geq C_b\int_b\big((\frac{\partial \hat{q}}{\partial x})^2+(\frac{\partial \hat{q}}{\partial y})^2\big)\\
&\int_c\lambda_{BC}^c\lambda_{CD}^c(\frac{\partial \hat{q}}{\partial x})^2\geq C_c \int_c(\frac{\partial \hat{q}}{\partial x})^2,\quad \|\frac{\partial \hat{q}}{\partial y}\|_H\geq C_K\sqrt{\int_{b}(\frac{\partial \hat{q}}{\partial y})^2+\int_{c}(\frac{\partial \hat{q}}{\partial y})^2}.
\end{align*}
Set $c_1=\min\{C_a,C_b,C_c,C_H, C_K\}$ and obtain (\ref{eqn:inf-sup condition 1}).

\textbf{Case $d=3$}: We use the macroelement described by Stenberg in \cite{Stenberg1987} in order to check the inf-sup condition. Let $\mathcal{M}$ be a macroelement partition of the domain decomposition of $\mathcal{T}$. For a macroelement $M\in\mathcal{M}$ we introduce the following usual notation:

\vspace{-10pt}
\begin{align*}
&WV_M=\{\hat{\boldsymbol{v}}_{|M}~|~\hat{\boldsymbol{v}}\in W_{k+d+1}\}\cap[H_0^1(M)]^3,\\
&WP_M=\{\hat{q}_{|M}~|~\hat{q}\in WP_{k+d}\}.
\end{align*}

Consider a generic macroelement $M\in\mathcal{M}$. Let $T_0\in\mathcal{T}$ be a tetrahedron of $M$ and denote by $x_0$ the internal vertex of $T_0$ which also belongs to the other element of $M$. There are three edges $e_i, i=1,\cdots,3$ of $T_0$ meeting at $x_0$. Thanks to the fact that $x_0$ is internal, none of the edges $e_i$ lie on the boundary $\partial \Omega$.

Let $\hat{q}\in WP_M$ be given and suppose that

\vspace{-10pt}
\begin{align}\label{eqn:3d divergence free}
\int_M \hat{q}\nabla\cdot\hat{\boldsymbol{v}}=0,\quad \forall\hat{\boldsymbol{v}}\in WV_M.
\end{align}
We shall prove that $\nabla \hat{q}$ vanishes on $T_0$, thus obtaining H1 condition described in Theorem 2.1 in \cite{BOFFI1997} by virtue of the fact that $T_0$ is arbitrary and $q$ is continuous.

First, we concentrate our attention on the edge $e_1$ and fix an $(x,y,z)$-coordinate system in such a way that $e_1$ lies in the direction of the $x$-axis. We consider the collection $\mathcal{A}=\{T_0,\cdots,T_n\}$ of those elements of $\mathcal{T}$ which share the edge $e_1$ in common with $T_0$ (including $T_0$ itself). It is clear that $T_i\in M$ and that exactly two faces of $T_i$ touch other elements of $\mathcal{A}$ of every $i$.

Define $\hat{\boldsymbol{v}}$ in the following way:

\vspace{-10pt}
\begin{align*}
&\hat{\boldsymbol{v}}_{|T_i}=\big(\lambda_i\kappa_i\frac{\partial \hat{q}}{\partial x},0,0\big),\\
&\hat{\boldsymbol{v}}_{|T}=(0,0,0),\quad \mbox{~if~} T\in\mathcal{T}, ~T\neq T_i, ~\forall i,
\end{align*}
where $\lambda_i$ and $\kappa_i$ are the equations of the two faces of $T_i$ which are not in common with any other element of $\mathcal{A}$, normalized in order to assume the same value at the opposite vertex. It is worthwhile to observe that these two vertices are $x_0$ and the other extreme of the edge $e_1$.

It is easily verified that $\hat{\boldsymbol{v}}$ is a polynomial of degree $k+1$ and that it is continuous in $M$. The continuity of $\hat{q}$ in $M$ ensures that the gradient of $\hat{q}$ is continuous between two elements in all the directions which are contained in the plane of the interface; the $x$-axis is the direction of $e_1$ which is the edge of all common faces among the elements of $\mathcal{A}$. Moreover, $\hat{\boldsymbol{v}}$ vanishes at the boundary of $M$; hence, the following inclusion holds:

\vspace{-10pt}
\begin{align*}
\hat{\boldsymbol{v}}\in WV_M.
\end{align*}

Suppose now that (\ref{eqn:3d divergence free}) hold.

\vspace{-10pt}
\begin{align*}
0=\int_M \hat{q}\nabla\hat{\boldsymbol{v}}=-\int_{M}\nabla\hat{q}\cdot\hat{\boldsymbol{v}}
=-\sum_{i=1}^n\int_{T_i}\lambda_i\kappa_i(\frac{\partial\hat{q}}{\partial x})^2.
\end{align*}
It follows that the component of $\nabla\hat{q}$ in the direction of the $x$-axis vanishes in $T_i$ for every $i$.

The same argument applies to the edge $e_2$ and $e_3$, giving the result that $\nabla\hat{q}$ vanishes on $T_0$ in the direction of $e_i$, for $i=1,\cdots,3$. These three directions being independent, the final result

\vspace{-10pt}
\begin{align*}
\nabla\hat{q}=(0,0,0),\quad \mbox{~in~} T_0
\end{align*}
is obtained and the lemma is proved. Then the H1 condition of Theorem 2.1 in \cite{BOFFI1997} is proved and the H2-H3 conditions follow immediately from the regularity assumption of $\mathcal{T}$.

\end{proof}

\bibliographystyle{spmpsci}
\bibliography{tex}

\end{document}